\title{Non-uniqueness of the Leray-Hopf solutions in the hyperbolic setting.}
\author{Chi Hin Chan\footnote{Institute for Mathematics and its Applications,
University of Minnesota,
207 Church Street SE,
Minneapolis, MN 55455-0134,
\texttt{chana002@ima.umn.edu}
} \and Magdalena Czubak\footnote{Department of Mathematics, University of Toronto, 40 St. George St.
Toronto, Ontario, M5S 2E4, Canada, \texttt{czubak@math.toronto.edu}}}
\newlength{\hchng}
\newlength{\vchng}
\newtheorem*{nonumthm}{Theorem}
\newtheorem{thm}{Theorem}[section]
\newtheorem{prop}[thm]{Proposition}
\newtheorem{cor}[thm]{Corollary}
\newtheorem{lemma}[thm]{Lemma}
\newtheorem{defn}[thm]{Definition}
\newtheorem{preremark}[thm]{Remark}
\newenvironment{remark}{\begin{preremark}\rm}{\medskip \end{preremark}}
\numberwithin{equation}{section}
\newcommand{\norm}[1]{\left\Vert#1\right\Vert}
\newcommand{\abs}[1]{\left\vert#1\right\vert}
\newcommand{\R}{\mathbb R}
\newcommand{\grad} {\nabla}
\DeclareMathOperator*{\osc}{osc}
\DeclareMathOperator{\dv}{div}
\DeclareMathOperator{\Def}{Def}
\DeclareMathOperator{\Ric}{Ric}
\DeclareMathOperator{\N}{N-S}
\def\H{\mathbb H^{2}(-a^{2})}
\def\be{\begin{equation}}
\def\ee{\end{equation}}
\newcommand{\NS}[1]{\N_{#1}}
\begin{document}
\maketitle
\begin{abstract}
We consider the Navier-Stokes equation on $\H$, the two dimensional
hyperbolic space with constant sectional curvature
$-a^{2}$.  We prove an ill-posedness result in the sense that the uniqueness of the
Leray-Hopf weak solutions to the Navier-Stokes equation breaks down on $\mathbb{H}^{2}(-a^{2})$.  We also obtain a corresponding result on a more general negatively curved manifold for a modified geometric version of the Navier-Stokes equation. Finally, as a corollary we also show a lack of the Liouville theorem in the hyperbolic setting both in two and three dimensions.    
\end{abstract}

\section{Introduction}
We investigate the impact the geometry of the underlying space has on the Leray-Hopf solutions to the Navier-Stokes equation.  More precisely, we consider the Navier-Stokes equation on negatively curved manifolds and present how the negative scalar curvature causes the break down of the uniqueness of the Leray-Hopf solutions.   
%More precisely we consider the space form $\H$, which is the two dimensional
%hyperbolic space with a constant sectional curvature
%$-a^{2}$.  We prove an ill-posedness result in the sense that given any $a>0$, the uniqueness of the Leray-Hopf weak solutions to the Navier-Stokes equation breaks down in the
%hyperbolic space $\mathbb{H}^{2}(-a^{2})$ setting.  
\newline\indent
Before we state the main
results, we survey some necessary historical background from both geometric analysis, and the regularity theory for the Navier-Stokes
equation in the usual Euclidean setting.
\subsection{Regularity theory for the Navier-Stokes equation on $\R^{n}$}
The Navier-Stokes equation on the Euclidean space $\mathbb{R}^{n}$
is given by
\begin{equation}\tag{$\NS{\R^{n}}$}
\begin{split}
\partial_{t}u -\Delta u + u\cdot \nabla u + \nabla P & = 0 , \\
\dv u& = 0 . 
\end{split}
\end{equation}
%As is well-known, the existence and uniqueness of solutions to the $\NS{\R^{3}}$ equation is a long standing
%open problem. 
Long time ago, for the dimensions $n=2$ and $n=3$, 
Leray~\cite{Leray} and Hopf~\cite{Hopf} established the existence of global
weak solutions $u\in L^{\infty}(0, \infty ; L^{2}(\mathbb{R}^{n}) ) \cap
L^{2}(0,\infty ;\dot H^{1}(\mathbb{R}^{n}))$.  Due to their work, we now have the following
general existence result, which historically served as the foundation for further
works in the regularity theory for $\NS{\R^{n}}$. 
\begin{nonumthm}
[Leray-Hopf weak solutions \cite{Leray, Hopf}] Given any initial datum $u_{0} \in L^{2}(\mathbb{R}^{n})$,
there exists at least one $\R^{n}$-valued function $u \in L^{\infty}(0, \infty ;
L^{2}(\mathbb{R}^{n}) ) \cap L^{2}(0,\infty ; H^{1}(\mathbb{R}^{n}))$ which
satisfies the following properties
\begin{itemize}
\item For any $\phi = (\phi_{1}, ..., \phi_{n}) \in C_{c}^{\infty}((0,\infty )\times
\mathbb{R}^{n})$ with $\dv \phi = 0$, we have
\begin{equation*}
\int_{0}^{\infty}\int_{\mathbb{R}^{n}} -u\cdot \partial_{t}\phi + \sum_{i,j}
(\partial_{j}\phi_{i}) (\partial_{j}u_{i}) -
\sum_{i,j}(\partial_{j}\phi_{i})(u_{i}u_{j}) dx dt = 0.
\end{equation*}
\item For every $t \geq 0$, $u$ satisfies the following global energy inequality
\begin{equation*}
\int_{\mathbb{R}^{n}} |u(t,x)|^{2} dx + 2 \int_{0}^{t} \int_{\mathbb{R}^{n}}|\nabla
u|^{2} dx ds \leq  \int_{\mathbb{R}^{n}} |u_{0}|^{2} dx .
\end{equation*}
\item $u(0,\cdot )$ coincides with the initial datum $u_{0}$ in the sense that 
\begin{equation}
lim_{t\rightarrow 0^{+}} \|u(t,\cdot ) - u_{0}\|_{L^{2}(\mathbb{R}^{n})} = 0.
\end{equation}
\end{itemize}
\end{nonumthm}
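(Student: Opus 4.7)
The plan is to construct approximate solutions via Leray-type mollification (or equivalently a Galerkin projection onto a divergence-free spectral basis), derive uniform a priori estimates, and then extract a subsequence converging to a weak solution. Concretely, I would replace the transport term $u \cdot \grad u$ by $(J_{\eps} u) \cdot \grad u$, where $J_{\eps}$ is a standard mollifier, and solve the resulting regularized quasi-linear parabolic system globally in time with mollified initial datum. Testing against $u^{\eps}$ and exploiting $\dv u^{\eps} = \dv (J_{\eps} u^{\eps}) = 0$ kills the convective contribution, yielding the energy identity
\[
\norm{u^{\eps}(t)}_{L^2}^2 + 2\int_0^t \norm{\grad u^{\eps}}_{L^2}^2 \, ds \leq \norm{u_0}_{L^2}^2,
\]
uniformly in $\eps$, which places $\{u^{\eps}\}$ in a bounded set of $L^{\infty}(0,\infty; L^2(\R^n)) \cap L^2(0,\infty; \dot H^1(\R^n))$.

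Next I would extract a weakly-$\ast$ convergent subsequence $u^{\eps} \rightharpoonup u$ in $L^{\infty}_t L^2_x$ and weakly in $L^2_t H^1_x$. The principal difficulty is passing to the limit in the quadratic term $\sum_{i,j}(\bdary_j \phi_i) u_i^{\eps} u_j^{\eps}$, which is not weakly continuous and hence demands strong compactness on the support of $\phi$. To obtain this, I would apply the Leray projection to eliminate the pressure and derive a uniform bound on $\bdary_t u^{\eps}$ in $L^{4/3}_t H^{-1}_{loc}$ (for $n=3$) or $L^2_t H^{-1}_{loc}$ (for $n=2$). Combined with the $L^2_t H^1_{loc}$ control, the Aubin--Lions--Simon lemma on any ball $B_R \subset \R^n$ gives strong convergence $u^{\eps} \to u$ in $L^2((0,T) \times B_R)$; a diagonal extraction over exhausting balls and time intervals then suffices to pass to the limit in the weak formulation against any compactly supported, divergence-free test field.

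Finally, the global energy inequality follows from weak lower semicontinuity of $\norm{\cdot}_{L^2}$ and of $\int_0^t \norm{\grad \cdot}_{L^2}^2 \, ds$ applied to the uniform $\eps$-level identity. To recover the initial datum strongly in $L^2$, I would first note that the bound on $\bdary_t u^{\eps}$ propagates to the limit and gives $u \in C_w([0,\infty); L^2(\R^n))$; the upper bound $\limsup_{t \to 0^+} \norm{u(t)}_{L^2} \leq \norm{u_0}_{L^2}$ extracted from the energy inequality, combined with the weak convergence $u(t) \rightharpoonup u_0$, then upgrades to strong $L^2$ convergence. The main obstacle throughout is the absence of global compact embeddings on $\R^n$, which must be circumvented by localizing via Rellich--Kondrachov on balls and a diagonal argument; a secondary technical point is the handling of the pressure, which is sidestepped by restricting to divergence-free test fields so that $\grad P$ never appears in the weak formulation.
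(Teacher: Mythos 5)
The paper does not prove this theorem; it is stated as classical background and attributed to Leray and Hopf, whose original arguments (Leray's mollification of the convective term, respectively Hopf's Galerkin scheme) are exactly what you sketch. Your proposal is a correct outline of that standard construction --- uniform energy estimates, Aubin--Lions compactness on exhausting balls with a diagonal extraction, weak lower semicontinuity for the energy inequality, and weak continuity plus norm convergence to recover the initial datum strongly in $L^{2}$ --- so it matches the proof the paper is implicitly invoking.
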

%We mention that the globally defined weak solution $u \in L^{\infty}(0, \infty ;
%L^{2}(\mathbb{R}^{n}) ) \cap L^{2}(0,\infty ; H^{1}(\mathbb{R}^{n}))$ as appears in
%the above theorem is now called Leray-Hopf weak solution to the Navier-Stokes
%equation on $\mathbb{R}^{N}$. 
Now that we have existence of the Leray-Hopf solutions for $\NS{\R^{2}}$ and $\NS{\R^{3}},$ we proceed to address the question of regularity.  The regularity of Leray-Hopf solutions on $\mathbb{R}^{2}$ greatly differs from the corresponding regularity problem for Navier-Stokes equation on
$\mathbb{R}^{3}$. Indeed, the smoothness and uniqueness of Leray-Hopf
solutions for $\NS{\R^{2}}$ was established in the
work of Leray (see for instance \cite{MichaelTaylor}).  As a sharp contrast, 
the regularity and uniqueness of solutions to the $\NS{\R^{3}}$ equation is a long standing open problem although due to the concentrated efforts by generations of PDE specialists there has been a significant progress in this area.
\newline\indent
Because of the limitation of space, we only mention some typical
regularity criteria for Leray-Hopf solutions to $\NS{\R^{3}}$. 
We also note that one of the goals of this discussion is to illustrate why there is more focus on the question of regularity than that of the uniqueness.
\newline\indent
Now, the first significant effort to break the silence since the fundamental work of
Leray and Hopf, was made in 1960's
through the efforts of 
Prodi~\cite{Prodi}, Serrin~\cite{Serrin}, and Ladyzhenskaya \cite{Ladyzhenskaya} leading to the
following regularity and uniqueness result (for more historical remarks see for
instance \cite{LinftyL3}). 
\begin{thm}\label{heatequationpertubation}
[Prodi, Serrin, Ladyzhenskaya] Let $u \in L^{\infty}(0, T ;
L^{2}(\mathbb{R}^{3}) ) \cap L^{2}(0, T ; \dot H^{1}(\mathbb{R}^{3}))$ to be a
Leray-Hopf weak solution to $\NS{\R^{3}}$, which
satisfies the additional condition that $u \in L^{p}(0,\infty ;
L^{q}(\mathbb{R}^{3}))$, for some $p, q$ satisfying $\frac{2}{p} + \frac{3}{q} = 1,$
with $q > 3$. Then, $u$ is smooth on $(0,T]\times \mathbb{R}^{3}$ and $u$ is
uniquely determined in the following sense
\begin{itemize}
\item suppose $v \in L^{\infty}(0, T ; L^{2}(\mathbb{R}^{3}) ) \cap L^{2}(0, T ;
H^{1}(\mathbb{R}^{3}))$ is another Leray-Hopf weak solution such that $u(0, \cdot ) = v(0,\cdot )$. Then, it follows that
$u=v$ on $(0,T]\times \mathbb{R}^{3}$. 
\end{itemize}
\end{thm}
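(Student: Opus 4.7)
The plan is to prove the uniqueness claim first by a weak-strong comparison, and then upgrade regularity by a bootstrap based on the subcritical Prodi-Serrin scaling. Let $u$ be the distinguished solution lying in $L^{p}(0,T;L^{q}(\R^{3}))$, and let $v$ be any other Leray-Hopf weak solution with $v(0,\cdot)=u(0,\cdot)$. Set $w=u-v$; using the decomposition $u\cdot\nabla u-v\cdot\nabla v=v\cdot\nabla w+w\cdot\nabla u$, the function $w$ formally satisfies
\begin{equation*}
\partial_{t}w-\Delta w+v\cdot\nabla w+w\cdot\nabla u+\nabla(P_{u}-P_{v})=0,\qquad \dv w=0,
\end{equation*}
and the goal is to derive a Gronwall-type inequality for $\|w(t)\|_{L^{2}}^{2}$.

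Pairing with $w$ and exploiting $\dv v=0$ kills the $v\cdot\nabla w$ contribution, so the only surviving trilinear term is $\int_{\R^{3}}(w\cdot\nabla u)\cdot w\,dx$. Integration by parts together with $\dv w=0$ rewrites this as $-\int u\cdot(w\cdot\nabla)w\,dx$, which Hölder's inequality bounds by $\|u\|_{L^{q}}\|w\|_{L^{q^{\ast}}}\|\nabla w\|_{L^{2}}$ where $\tfrac{1}{q^{\ast}}=\tfrac{1}{2}-\tfrac{1}{q}$. The three-dimensional Gagliardo-Nirenberg interpolation then gives $\|w\|_{L^{q^{\ast}}}\le C\|w\|_{L^{2}}^{1-3/q}\|\nabla w\|_{L^{2}}^{3/q}$, and a single application of Young's inequality, tuned so that the resulting factor $\|\nabla w\|_{L^{2}}^{2}$ is absorbed by the viscous term on the left, produces
\begin{equation*}
\frac{d}{dt}\|w(t)\|_{L^{2}}^{2}\le C\,\|u(t)\|_{L^{q}}^{p}\,\|w(t)\|_{L^{2}}^{2},
\end{equation*}
where the exponent $p=\tfrac{2q}{q-3}$ is precisely the Prodi-Serrin conjugate. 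Since $\|u\|_{L^{q}}^{p}\in L^{1}(0,T)$ by hypothesis, Gronwall's lemma forces $w\equiv0$, proving the uniqueness assertion.

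For the smoothness claim, I would construct, on a short interval $[0,\tau]$, a mild solution $\tilde{u}$ via a Kato-type fixed point in $C([0,\tau];L^{q}(\R^{3}))$, for which heat-semigroup smoothing yields $\tilde{u}\in C^{\infty}((0,\tau]\times\R^{3})$ by a standard bootstrap. The uniqueness step above, applied with $\tilde{u}$ in place of $u$ on that interval, identifies $u$ with $\tilde{u}$ and transports smoothness to $u$; the Prodi-Serrin integrability of $u$ on $(0,T)$ then supplies the finiteness needed to iterate this construction on overlapping intervals covering $(0,T]$.

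The main technical obstacle is the rigorous justification of the formal energy identity for $w$: a generic Leray-Hopf solution $v$ is not regular enough to be used as a test function either for itself or for $u$, and the pressures $P_{u}$, $P_{v}$ are only distributions. The standard remedy is to mollify in space and time, write the weak formulation as an identity against divergence-free test fields (which kills the pressures without needing any regularity of $\nabla P$), and take limits; here the Prodi-Serrin hypothesis on $u$ is exactly what allows $u$ to be used as a legitimate test function in the weak formulation for $v$ and vice versa, so that the cross terms close. Once this step is on firm ground, the remaining estimates are the short computation above.
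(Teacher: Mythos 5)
This statement is the classical Prodi--Serrin--Ladyzhenskaya theorem, which the paper quotes as background (with citations to \cite{Prodi}, \cite{Serrin}, \cite{Ladyzhenskaya}) and does not prove, so there is no internal proof to compare against; I can only assess your argument on its own terms. Your outline is the standard and correct one. The exponent bookkeeping checks out: H\"older with $\tfrac{1}{q}+\tfrac{1}{q^{*}}+\tfrac{1}{2}=1$, Gagliardo--Nirenberg with interpolation exponent $\theta=3/q$, and Young's inequality with conjugate pair $\bigl(\tfrac{2q}{q+3},\tfrac{2q}{q-3}\bigr)$ indeed produce the Gronwall factor $\|u(t)\|_{L^{q}}^{p}$ with $\tfrac{2}{p}+\tfrac{3}{q}=1$, and $\|u\|_{L^{q}}^{p}\in L^{1}(0,T)$ closes the uniqueness argument since $w(0)=0$. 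You also correctly identify the genuine crux, namely that the energy identity for $w$ cannot be obtained by naive testing: one must use the energy \emph{inequality} for $v$, the energy \emph{equality} for $u$ (which itself is a consequence of the Serrin condition, a point worth making explicit since it is where the hypothesis enters twice), and a separately justified identity for the cross term $\int u\cdot v$, obtained by mollification and by using each solution as a test field for the other.

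Two small refinements would tighten the regularity half. First, the mild solution should be launched not from $t=0$ (where the datum is only in $L^{2}$) but from almost every $t_{0}>0$, at which $u(t_{0})\in L^{2}\cap L^{q}\cap H^{1}$; this is consistent with, and in fact explains, why the conclusion is smoothness on $(0,T]$ rather than $[0,T]$. Second, to cover all of $(0,T]$ you need the local existence time of the Kato iteration in $L^{q}$ (which for $q>3$ is subcritical and hence controlled by $\|u(t_{0})\|_{L^{q}}$) to be bounded below along the cover, or alternatively a continuation criterion in terms of the $L^{p}(0,T;L^{q})$ norm; either route works but should be stated. With these points made precise, the proposal is a complete and correct proof of the quoted theorem.
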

Here, let us briefly mention why the case of $q=3$ was not included in Theorem
\ref{heatequationpertubation}. Indeed, it is well known that a solution $\theta :
[0,T)\times \mathbb{R}^{3}\rightarrow \mathbb{R}$ to the \emph{heat equation}
arising from any initial datum $\theta_{0} \in L^{3}(\mathbb{R}^{3})$ satisfies the
following estimate for any pair of indices $p,q$ with
$\frac{2}{p}+\frac{3}{q} = 1$ \emph{and} $q > 3$ (see \cite[Appendix]{LinftyL3}).

\begin{equation}
\|\theta \|_{L^{p}(0,T; L^{q}(\mathbb{R}^{3}))} \leq C(q)
\|u_{0}\|_{L^{3}(\mathbb{R}^{3})} ,
\end{equation}
where $C_{q}$ depends only on $q$. So, in some sense,
the extra condition as imposed on the Leray-Hopf solution $u$ in
Theorem~\ref{heatequationpertubation} ensures that the qualitative behavior of the
Leray-Hopf solution $u$ would be a slight perturbation of
solutions of the heat equation.  Another explanation for the relatively simple
status of Theorem~\ref{heatequationpertubation} is that the $L^{p}_{t}L^{q}_{x}$ norm of
the solution $u$ under the integral condition as promised in
Theorem~\ref{heatequationpertubation} shrinks to zero under the natural scaling
$u_{\epsilon}(t,x) = \epsilon u(\epsilon^{2}t , \epsilon x)$ as $\epsilon
\rightarrow 0$. However, this is no longer valid in the borderline case of
$L^{\infty}(0,T; L^{3}(\mathbb{R}^{3}))$. This partially explains the long delay in
the settlement of this exceptional case of $u \in L^{\infty}(0,T;
L^{3}(\mathbb{R}^{3}))$, which was finally established in the recent work
of Escauriaza, Seregin, and \v{S}ver\'ak \cite{LinftyL3}.
\newline\indent
Before we close our discussion let us mention that one of
the working principles in the regularity theory of Navier-Stokes equations on
$\mathbb{R}^{3}$ is (more or less) to reduce the situation under consideration (say
$u \in L^{\infty}(L^{3})$ in the case of \cite{LinftyL3})
to the regime which is covered by Theorem~\ref{heatequationpertubation}.  Once this
can be achieved, then the uniqueness of the solution would come
for free, due to the uniqueness claim in  Theorem~\ref{heatequationpertubation}.
This explains to some extent the fact that regularity issue is more of a focus than the
uniqueness issue in the regularity theory for Navier-Stokes equations in the
$\mathbb{R}^{3}$ setting. However, as is well-known, the weak formulation for
Leray-Hopf weak solutions to the Navier-Stokes equation on $\mathbb{R}^{3}$ only
gives the natural bound 
$u \in L^{p}(0,\infty ; L^{q}(\mathbb{R}^{3}))$, with indices $p,q$ satisfying
$\frac{2}{p} + \frac{3}{q} = \frac{3}{2}$. One readily sees that there is a
significant gap between the natural bound offered by the weak formulation and the
condition required by Theorem~\ref{heatequationpertubation}, and it is unclear how
to make a link between them.   See again the introduction of \cite{LinftyL3} for a
discussion about this point, and for further developments, we refer our
readers to a piece of recent work  by Vasseur \cite{Higherderivatives}. 

\subsection{Navier Stokes equation on a Riemannian manifold}
Historically speaking, the correct form of the Navier-Stokes equations in the
Riemannian manifold setting was first obtained by Ebin and Marsden \cite{EbinMarsden}.  They considered compact Riemannian, oriented, $n$-dimensional manifolds both with and without boundary.  Moreover, they remark that the derivation of the correct equations assumes that the manifold in question is Einstein, i.e.,$\Ric=\lambda g$, for some constant 
$\lambda$ where $\Ric$ is the Ricci tensor and $g$ is the Riemannian metric $g$.  We note, this is in particular true of space forms, where $\Ric=(n-1)K_{M}g$ (see Section \ref{diffgeom} below).  
\newline\indent
According to \cite{EbinMarsden} the ordinary Laplacian should be replaced by the following
operator in the formulation of the Navier-Stokes equation on a Riemannian manifold  
\begin{equation}\label{canconicalLaplacian}
L = 2 Def^{*}Def = \overline{\nabla}^{*} \overline{\nabla} + dd^{*} - \Ric = (dd^{*}
+ d^{*} d) + dd^{*} -2 \Ric ,
\end{equation}
where  $Def$ and $Def^{*}$ are the stress tensor and its adjoint respectively,
$\overline{\nabla}$ stands for the induced Levi-Civita connection on the cotangent
bundle $T^{*}M$, $\overline{\nabla}^{*}
\overline{\nabla}$ stands for the Bochner Laplacian, with $\overline{\nabla}^{*}$ to
be the adjoint operator associated to $ \overline{\nabla}$, $(dd^{*} +
d^{*} d)= -\Delta$
stands for the Hodge Laplacian with $d^{*}$ to be the formal adjoint of the exterior
differential operator $d$, and $\Ric$ is the Ricci operator (see Sections \ref{diffgeom} and \ref{hodge} for definitions and \cite{DindosMitrea, MichaelTaylor} for a further discussion of the deformation tensor). We first remark that the operator $L$
as given in expression (\ref{canconicalLaplacian}) is an operator sending sections
of $T^{*}M$ into sections of $T^{*}M$. This means that, the Navier-Stokes equations
on a Riemannian manifold $M$ is formulated in terms of sections of $T^{*}M$ instead
of vector fields on $M$.  
\newline\indent
As a result, the usual convection term $\nabla_{u}u$ in terms of vector fields also has to be rewritten.  There is a natural correspondence between vector fields and $1$-forms (see Section 2.1), which produces the term  $\overline{\nabla}_{U}U^{\ast}$, where $U$ is the unique vector field corresponding to a $1$-form $U^{\ast}$. 
\newline\indent
In summary, we regard the solutions to the Navier-Stokes
equations on a general Riemannian manifold $M$ to be differential $1$-forms $U^\ast
\in C^{\infty}(M ; T^{*}M) $ satisfying the following differential equation
\begin{align}
&\partial_{t}U^{\ast} + L (U^{\ast}) + \overline{\nabla}_{U}U^{\ast} + dP = 0, \\
&d^{\ast}U^{\ast}=0.
\end{align}
where $P$ is a scalar function on $M$.  Using the fact that $U^{\ast}$ is divergence free we can further rewrite the equations as follows
%in which the associated vector field $v_{\theta} \in C^{\infty}(M; TM)$ is uniquely
%determined by $g(v_{\theta}, X) = <X, \theta>_{TM\otimes T^{*}M}$, in which $g(\cdot
%,\cdot )$ is the Riemannina metric on $TM$ and $<\cdot ,\cdot >_{TM\otimes T^{*}M}$
%is the natural pairing between vector fields and $1$-forms on $M$. Indeed, by using
%the Riemannian metric $g(\cdot, \cdot )$, we can consider the following
%correspondence
%
%\begin{equation}
%U \in C^{\infty}(M ; TM) \rightarrow U^{*} = g( \cdot , U) \in  C^{\infty}(M ; T^{*}M),
%\end{equation}
%
%which gives a one to one correspondence between vector fields $U$ and the associated
%$1$-form $U^{*} =  g( \cdot , U)$  on $M$. Under such an identification of vector

\begin{equation}\tag{$\NS{M}$}
\begin{split}
\partial_{t}U^{*} - \Delta U^{*}  + \overline{\nabla}_{U}U^{*} - 2\Ric(U^{\ast})+ dP &= 0,\\
d^{\ast}U^{\ast}&=0.
\end{split}
\ee
which is the main equation that we study in this article.
\newline\indent
A less natural equation to consider is one without the Ricci operator.  We refer to it as the \emph{modified} Navier-Stokes equations on $M$ and record it here
\begin{equation}\label{MNS}
\begin{split}
\partial_{t}U^{*} - \Delta U^{*}  + \overline{\nabla}_{U}U^{*} + dP &= 0.\\
d^{\ast}U^{\ast}&=0
\end{split}
\end{equation}
It is less natural from the point of view of the derivation of the Navier-Stokes equations performed in \cite{EbinMarsden}.  We consider it in this paper, because we would like to present how a more general manifold than just a space form can influence the behavior of solutions (we explain this more below).  
\newline\indent
Since $\Def^{\ast}\Def U^{\ast}$ plays now the role of the dissipation, the global energy inequality becomes
\begin{equation}\label{energyM}
\int_{M} \abs{U^{\ast}}^{2}(t,x)  + 2 \int_{0}^{t} \int_{M}      \overline{g} (\Def U^{\ast} , \Def U^{\ast}  )       ds \leq  \int_{M} |u_{0}|^{2} ,
\end{equation}
where $\overline{g}( \cdot , \cdot )$ stands for the inner product structure on the bundle $T^{*}M \otimes T^{*}M$ \emph{induced by the Riemannian metric $g(\cdot, \cdot )$ on $M$} (see Section \ref{diffgeom}).
\newline\indent
We now mention some of the previous results on a Riemannian manifold (for more see \cite{DindosMitrea} and references therein).
Priebe \cite{Priebe} appears to be the first one to follow \cite{EbinMarsden} and use the correct version of the equations $\NS{M}$ instead of \eqref{MNS}.  \cite{Priebe} also assumes compactness of $M$ and works on manifolds with boundary.  Dindos and Mitrea \cite{DindosMitrea} consider the linearized version of the stationary Navier-Stokes equations on
a subdomain of a compact Riemannian manifold.
%which by the Cartan-Hadamard theorem must be non-compact.
In fact, we have not been able to find any results for non-compact manifolds except for the result of Q.S. Zhang \cite{QSZhang}.  In \cite{QSZhang} the author shows the ill-posedness of the \emph{weak solution with finite $L^{2}$ norm} on a connected sum of two copies of $\R^{3}$.  Hence the topology of the underlying manifold seems to play a role.  In this paper, we take a geometric point of view and also consider the dissipation term, which involves careful computations.  
%Finally, in general, we have not found any results on negatively curved manifolds.
\newline\indent
%We show in Section \ref{energysection} that \eqref{energyM} can follow from the more
We are now ready to state our main results.
\subsection{Statements of the results and discussion of the proofs}  
\begin{thm}[Non-uniqueness of $\NS{\H}$]\label{mainthm}
Let $a>0$. Then there exist non-unique Leray-Hopf solutions to $\NS{\H}$.
\end{thm}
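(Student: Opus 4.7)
The strategy is to produce, for suitable initial data, at least two distinct Leray--Hopf solutions of $\NS{\H}$ --- exploiting the fact that $\H$ carries non-constant bounded harmonic functions with $L^{2}$-gradient, in stark contrast to the Liouville property enjoyed by $\R^{2}$. I would begin by fixing such a harmonic function $f$, namely one with $df \in L^{2}$ and $\mathrm{Hess}\, f \in L^{2}$; concrete candidates come from separation of variables in geodesic polar coordinates on $\H$, where one can solve the hyperbolic Laplace equation $\Delta f = 0$ with an angular mode that forces the required integrability.

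The ansatz is $U^{\ast}(t,x) = \alpha(t)\, df(x)$ for a scalar profile $\alpha$ with $\alpha(0) = 1$. Since $f$ is harmonic, $df$ is both closed and co-closed, so the Hodge Laplacian portion of the dissipation annihilates $U^{\ast}$; using $\Ric = -a^{2} g$ on $\H$, the Ricci contribution becomes $-2\Ric(U^{\ast}) = 2a^{2} U^{\ast}$; and the convection term reduces to the exact 1-form $\overline{\nabla}_{U} U^{\ast} = \tfrac{1}{2} \alpha^{2}\, d|\nabla f|^{2}$. Substituting into $\NS{\H}$ yields
\[
(\alpha'(t) + 2a^{2} \alpha(t))\, df + \tfrac{1}{2} \alpha(t)^{2}\, d|\nabla f|^{2} + dP = 0,
\]
which is solved by $P = -(\alpha' + 2a^{2}\alpha)\, f - \tfrac{1}{2} \alpha^{2} |\nabla f|^{2}$. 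Hence, for every smooth $\alpha$, the pair $(U^{\ast}, P)$ is a classical --- and therefore weak --- solution of $\NS{\H}$ with initial datum $\alpha(0)\, df$.

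The Leray--Hopf energy constraint \eqref{energyM} is then brought to bear via the Bochner identity for harmonic $f$,
\[
\tfrac{1}{2} \Delta |\nabla f|^{2} \;=\; |\mathrm{Hess}\, f|^{2} + \Ric(\nabla f, \nabla f) \;=\; |\mathrm{Hess}\, f|^{2} - a^{2} |\nabla f|^{2},
\]
which, once integrated with a suitable cut-off, forces $\|\mathrm{Hess}\, f\|_{L^{2}}^{2} = a^{2} \|df\|_{L^{2}}^{2}$, and since $\Def(df) = \mathrm{Hess}\, f$, gives $\|\Def U^{\ast}(t)\|_{L^{2}}^{2} = a^{2} \alpha(t)^{2} \|df\|_{L^{2}}^{2}$. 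After dividing by $\|df\|_{L^{2}}^{2}$, the energy inequality becomes the scalar condition
\[
\alpha(t)^{2} + 2a^{2} \int_{0}^{t} \alpha(s)^{2} \, ds \;\leq\; \alpha(0)^{2},
\]
equivalent on $\alpha > 0$ to the differential inequality $\alpha'(t) \leq -a^{2} \alpha(t)$. Many continuous profiles with $\alpha(0) = 1$ satisfy this: e.g.\ $\alpha_{1}(t) = e^{-a^{2} t}$ saturates it, while $\alpha_{2}(t) = e^{-2 a^{2} t}$ lies strictly inside. The resulting 1-forms $U_{1}^{\ast} = e^{-a^{2} t}\, df$ and $U_{2}^{\ast} = e^{-2 a^{2} t}\, df$ are thus two distinct Leray--Hopf solutions of $\NS{\H}$ sharing the common initial datum $df$, with strong $L^{2}$-continuity at $t = 0$ immediate from $\|U_{i}^{\ast}(t) - df\|_{L^{2}} = |\alpha_{i}(t) - 1|\, \|df\|_{L^{2}}$.

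The principal obstacle is the production of an explicit harmonic $f$ on $\H$ with $df, \mathrm{Hess}\, f \in L^{2}$ and enough decay at the ideal boundary to justify the Bochner integration. Once such $f$ is in hand, the rest of the argument is algebraic: the structural reason non-uniqueness occurs is that the negative constant curvature, through the Bochner identity, ties the deformation energy to a constant multiple of $\|df\|_{L^{2}}^{2}$, collapsing the vector-valued energy inequality into a single scalar differential inequality that admits a continuum of solutions.
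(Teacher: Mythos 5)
Your overall strategy coincides with the paper's in every structural respect: the ansatz $U^{\ast}=\alpha(t)\,df$ with $f$ harmonic, the same pressure, the reduction of the dissipation to $a^{2}\alpha(t)^{2}\|df\|_{L^{2}}^{2}$ via the Bochner identity, and the same family of exponential profiles (your $e^{-a^{2}t}$ and $e^{-2a^{2}t}$ are the paper's $\psi=e^{-At/2}$ with $A=2a^{2}$ and $A=4a^{2}$) producing distinct Leray--Hopf solutions from the common datum $df$. The algebraic part of your argument, including the pressure and the scalar form of the energy inequality, is correct.

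The genuine gap is exactly the step you flag as ``the principal obstacle'' and then leave unresolved: producing a nonconstant harmonic $f$ on $\H$ for which (i) $df\in L^{2}(\H)$ and (ii) the integrated Bochner identity $\int_{\H}\Delta|\nabla f|^{2}=0$ is actually justified. This is where essentially all of the paper's work lives. The paper takes $f=F$ to be the Anderson--Sullivan bounded harmonic function with $C^{1}$ data on $S(\infty)$, proves the pointwise decay $|\nabla F|\leq C e^{-\delta\rho}$ for any $\delta<a$ by combining the Perron barriers $\overline{\phi}\pm\alpha e^{-\delta\rho}$ with Yau's gradient estimate (Proposition \ref{exponentialdecayofgradientF}), deduces $\|\nabla F\|_{L^{2}(\H)}<\infty$ by weighing $e^{-2\delta\rho}$ against the volume growth $\tfrac{1}{a}\sinh(a\rho)$ (which forces the window $\tfrac{a}{2}<\delta<a$), and then --- crucially --- proves $\int_{\H}\bigl|\nabla|\nabla F|^{2}\bigr|<\infty$ via the covering Lemma \ref{coveringlemma} and a Caccioppoli-type estimate for the subharmonic function $|\nabla F|^{2}+Ae^{-2\delta\rho}$ (Proposition \ref{L1finite}). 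Only this $L^{1}$ bound licenses $\int_{\H}\mathrm{div}\{\nabla|\nabla F|^{2}\}=0$ and hence your identity $\|\mathrm{Hess}\,f\|_{L^{2}}^{2}=a^{2}\|df\|_{L^{2}}^{2}$; ``a suitable cut-off'' does not suffice without quantitative control of $\nabla|\nabla f|^{2}$ at infinity, because the discarded boundary terms live on circles whose length grows like $\sinh(a\rho)$. Your proposed alternative source of $f$ is in fact viable on the explicit space form $\H$: in two dimensions $\Delta_{g}$ is conformal to the Euclidean Laplacian on the Poincar\'e disk, so any bounded Euclidean-harmonic function gives $df\in L^{2}(\H)$ by conformal invariance of the Dirichlet integral, and this would bypass the paper's Section 3 entirely. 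But you would still have to verify the decay of $\mathrm{Hess}\,f$ (or an analogue of Proposition \ref{L1finite}) needed to kill the boundary terms in the Bochner integration; as written, the proof is incomplete at precisely this point.
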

%\begin{remark}
%In fact, the proof of the theorem shows, there exist infinitely many solutions.
%\end{remark}
\begin{remark}
The consequence of Theorem \ref{mainthm} is that unlike in the Euclidean setting, the notion of the Leray-Hopf solutions might not be the proper foundation for the study of  solutions to the Navier-Stokes equations on the space form with negative sectional curvature $-a^{2}$ in dimension two.  The question of what happens to the strength of the framework of the Leray-Hopf solutions on $H^{3}(-a^{2})$ is open.  The lack of a definite answer at this point is mainly due to the specific form of the techniques we use in this paper (see remarks below).  It is an interesting question to see if perhaps the techniques could eventually be extended/modified to give some insight into the corresponding question in 3 dimensions.
\end{remark}
\begin{cor}[Lack of the Liouville theorem for space forms]\label{cor1}
Let  $n\geq 2$, and $a>0$ then there exist nontrivial bounded solutions of $\NS{H^{n}(-a^{2})}$.   
\end{cor}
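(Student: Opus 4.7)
The plan is to treat the $n=2$ case as an immediate consequence of Theorem \ref{mainthm}, and then to handle $n \geq 3$ by an axially symmetric ansatz built around a fixed geodesic in $H^{n}(-a^{2})$.

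For $n = 2$, observe that $U^\ast \equiv 0$ is trivially a Leray-Hopf solution of $\NS{\H}$ with zero initial datum. By Theorem \ref{mainthm} there is a second, genuinely different Leray-Hopf solution issuing from the same zero initial datum, and it is therefore automatically nontrivial. The construction underlying Theorem \ref{mainthm} --- based on rotationally symmetric profiles in geodesic polar coordinates --- is expected to produce solutions in explicit closed form from which a uniform $L^\infty(\H)$ bound can simply be read off, yielding the required bounded nontrivial solution in dimension two.

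For $n \geq 3$, I would fix a complete geodesic $\gamma \subset H^{n}(-a^{2})$ and introduce adapted coordinates $(r,\theta,z)$, where $r$ is the distance to $\gamma$, $\theta \in S^{n-2}$ parametrizes the unit normal sphere to $\gamma$, and $z$ parametrizes $\gamma$ itself. I would then seek stationary divergence free 1-forms of the form $U^\ast = f(r)\,\omega_\theta$, where $\omega_\theta$ is the dual 1-form of a unit rotational Killing field around $\gamma$. By rotational symmetry, the convection term $\overline{\nabla}_{U}U^\ast$ reduces to a radial gradient which can be absorbed into $dP$. What remains, namely $-\Delta U^\ast - 2\Ric(U^\ast) = -dP$, collapses to a linear second order ODE for $f$ whose coefficients involve $\sinh(ar)$, $\cosh(ar)$ and $a^{2}$. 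Standard ODE analysis then selects a bounded nontrivial profile $f$ with $f(0)=0$ (ensuring smoothness across the axis $\gamma$), and the resulting $U^\ast$ is the desired bounded nontrivial stationary solution of $\NS{H^{n}(-a^{2})}$.

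The main obstacle in the higher dimensional step is verifying the reduction in complete detail: one must compute the action of the Bochner Laplacian on $f(r)\,\omega_\theta$ inside the warped-product geometry of $H^{n}(-a^{2})$, track the additional terms generated by the second fundamental form of the tubes $\{r=\mathrm{const}\}$ around $\gamma$, and confirm that the Ricci contribution $-2\Ric(U^\ast) = 2(n-1)a^{2}\,U^\ast$ enters with the correct positive sign so that the resulting ODE admits a bounded nontrivial solution. This positivity --- a direct consequence of the negative sectional curvature --- is the geometric mechanism that mirrors the two dimensional construction and breaks the Liouville principle in the hyperbolic setting.
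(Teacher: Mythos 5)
There is a genuine gap in the $n\geq 3$ part, and a mischaracterization in the $n=2$ part. The paper's proof of Corollary \ref{cor1} is a one-line observation: the very same ansatz $U^{\ast}=\psi(t)\,dF$, $P=-\partial_{t}\psi(t)F-\tfrac12|dF|^{2}+2a^{2}(n-1)F$, with $F$ a nontrivial bounded harmonic function on $H^{n}(-a^{2})$ (which exists for \emph{every} $n\geq 2$ by Theorem \ref{thmharmonic}) and $\psi$ any bounded function, solves $\NS{H^{n}(-a^{2})}$ in all dimensions, because the algebraic verification (Hodge identity \eqref{simple}, Lemma \ref{convection}, and $\Ric(\omega)=-(n-1)a^{2}\omega$) is dimension independent. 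The only place dimension enters in the main theorems is the finite energy and finite dissipation, and the corollary does not require those: it only requires $U^{\ast}\in L^{\infty}$, which follows from $F\in C^{\infty}(M)\cap C^{0}(\overline{M})$ together with the gradient estimate, Theorem \ref{thmgradient}. You did not notice that dropping the Leray--Hopf requirement removes the only dimensional obstruction, and so you invented an entirely new axially symmetric construction for $n\geq 3$ that you do not carry out: the reduction of the Bochner Laplacian on $f(r)\,\omega_{\theta}$ to an ODE, the regularity across the axis, and the existence of a bounded nontrivial profile are all asserted but not proved, and you yourself flag this as ``the main obstacle.'' As written, the $n\geq3$ case is not established.

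The $n=2$ step also misreads Theorem \ref{mainthm}. The non-unique Leray--Hopf solutions there do not emanate from zero initial datum; they all share the \emph{nonzero} initial datum $u_{0}=dF$ and differ through the choice of $\psi$ (e.g.\ different $A\geq 2a^{2}$ in $\psi(t)=e^{-At/2}$). The zero solution is not one of the pair, so your argument ``zero is a Leray--Hopf solution, hence the other one is nontrivial'' does not literally apply, although nontriviality of the constructed solutions is immediate since $dF\not\equiv 0$ for nonconstant boundary data. Likewise the solutions are not ``rotationally symmetric profiles in geodesic polar coordinates''; they are gradients of bounded harmonic functions with prescribed nonconstant data on $S(\infty)$, and their boundedness is read off from the gradient estimate rather than from any closed form.
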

\begin{remark}
The proof of Corollary \ref{cor1} and Corollary \ref{cor2} below follows trivially from the proofs of their theorems.  Moreover, it does not require any of the delicate estimates developed in this paper.  As such it is just a by-product of the main results and we only include it here for completeness, and because of the general importance the Liouville theorems play in the subject of the Navier-Stokes equations.   See Section \ref{corproofs} for motivation and some background. 
\end{remark}
If one decides to omit the Ricci term from the equation, we can also have a non-uniqueness result on a more general negatively curved Riemannian manifold than just $\H$. 
\begin{thm}\label{thm2}
Let  $a, b>0$ be such that $\frac 12 b<a\leq b$, and let $M$ be a simply connected, complete $2$-dimensional Riemannian manifold with sectional curvature satisfying $-b^{2} \leq K_{M} \leq -a^{2}$.  Then there exists non-unique Leray-Hopf solutions to the modified Navier-Stokes equation \eqref{MNS}.
\end{thm}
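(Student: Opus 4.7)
My plan is to port the construction that establishes Theorem~\ref{mainthm} from $\H$ to the variable-curvature manifold $M$, replacing every explicit hyperbolic identity by a comparison estimate. The reason one must work with the modified equation~\eqref{MNS} rather than $\NS{M}$ is that on $\H$ the Ricci operator reduces to the scalar constant $-a^{2}I$, which can be absorbed into the dissipation coefficient, while on a manifold with variable curvature $\Ric(U^{\ast})$ is no longer a scalar multiple of $U^{\ast}$ and breaks the separation-of-variables structure that the construction relies on.

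First, Cartan--Hadamard applied to the simply connected, complete, nonpositively curved surface $M$ gives a global diffeomorphism $\exp_{o}\colon T_{o}M\to M$, so one has global geodesic polar coordinates $(r,\theta)$ in which $g=dr^{2}+f(r,\theta)^{2}d\theta^{2}$, with $f(0,\cdot)=0$ and $\partial_{r}f(0,\cdot)=1$. The Jacobi equation $\partial_{r}^{2}f+Kf=0$ and Sturm comparison yield
\[
\tfrac{1}{a}\sinh(ar)\;\leq\;f(r,\theta)\;\leq\;\tfrac{1}{b}\sinh(br),\qquad \cosh(ar)\;\leq\;\partial_{r}f(r,\theta)\;\leq\;\cosh(br),
\]
together with analogous two-sided bounds on $\partial_{r}f/f$ and $1/f$. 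These comparison estimates will replace the explicit identities $f=\sinh(ar)/a$, $f'=\cosh(ar)$ used on the model.

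Next I would mimic on $M$ the axially symmetric ansatz $U^{\ast}(t,r,\theta)=\phi(t)F(r)\,d\theta$. Because the coefficient is $\theta$-independent, the condition $d^{\ast}U^{\ast}=0$ is automatic from the polar-coordinate formula for the codifferential, and the nonlinear term $\overline{\nabla}_{U}U^{\ast}$ becomes a pure function of $(r,t)$ times $dr$, hence exact and absorbable into $-dP$. The remaining heat part $\partial_{t}U^{\ast}-\Delta U^{\ast}=0$ collapses, upon computing the Hodge Laplacian in the polar frame, to an equation of the form $\phi'(t)F(r)=\phi(t)\,f(r,\theta)\bigl(F'/f\bigr)'$, which separates if $F$ solves the Sturm--Liouville problem $f(F'/f)'=\mu F$ and $\phi(t)=Ce^{\mu t}$. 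On $\H$ this ODE has closed-form solutions; on $M$ the comparison bounds above allow a Sturm--Liouville argument to produce $F$ with the same qualitative profile, namely vanishing at $r=0$ (so that $F(r)\,d\theta$ extends smoothly at the pole) and with growth $F(r)=O(e^{\alpha r})$ at infinity for some $\alpha<a/2$.

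Finally, I would verify that $U^{\ast}$ so constructed is a genuine Leray--Hopf solution: membership in $L^{\infty}(0,T;L^{2}(M))\cap L^{2}(0,T;H^{1}(M))$ and the energy inequality~\eqref{energyM} both reduce to the integrability of $F^{2}/f$ and $(F')^{2}/f$ on $[0,\infty)$, which follows from $\alpha<a/2$ combined with $f\geq\sinh(ar)/a$; comparison of this nontrivial solution with those produced by the standard Leray--Hopf existence scheme then yields the non-uniqueness. I expect the main obstacle to be the ODE-comparison step that produces $F$: on $\H$ the eigenfunctions are classical special functions and the relevant integrals compute exactly, whereas on $M$ only the pinching bounds on $f$ are available, so the exponent $\alpha$ must be extracted from a non-autonomous ODE by comparison. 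The restriction $\tfrac12 b<a$ is imposed, I expect, precisely here: the decay rate $\alpha<a/2$ extracted from comparison against the model of curvature $-a^{2}$ must still dominate the $O(e^{br})$ growth of the volume density permitted by the opposite comparison against the model of curvature $-b^{2}$, and the exponent arithmetic leaves room only when $a$ and $b$ lie within a factor of two.
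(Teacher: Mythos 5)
Your proposal has a genuine gap at its core: the separated ansatz $U^{\ast}=\phi(t)F(r)\,d\theta$ presupposes that the metric is rotationally symmetric about the chosen pole, and a general surface with only the pinching $-b^{2}\leq K_{M}\leq -a^{2}$ is not. In geodesic polar coordinates $g=dr^{2}+f(r,\theta)^{2}d\theta^{2}$ the function $f$ genuinely depends on $\theta$, and already the divergence-free condition fails: $\ast(F(r)\,d\theta)=-\tfrac{F}{f}\,dr$, so $d^{\ast}(F\,d\theta)$ is proportional to $\partial_{\theta}(F/f)=-F\,\partial_{\theta}f/f^{2}$, which is nonzero unless $\partial_{\theta}f\equiv 0$. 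The same angular derivatives of $f$ contaminate the Hodge Laplacian (so the heat part does not collapse to a Sturm--Liouville ODE in $r$) and prevent $\overline{\nabla}_{U}U^{\ast}$ from being a multiple of $dr$, hence from being exact. The comparison bounds $\tfrac1a\sinh(ar)\leq f\leq\tfrac1b\sinh(br)$ control the size of $f$ but say nothing about $\partial_{\theta}f$, so they cannot rescue the separation of variables. A secondary issue: your closing appeal to ``comparison with the standard Leray--Hopf existence scheme'' is not how non-uniqueness is obtained here --- no such scheme is set up on $M$; one instead exhibits, for a fixed initial datum, a whole family of distinct solutions (e.g.\ $\psi(t)=e^{-At/2}$ for different admissible $A$).

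The paper's route avoids all of this by taking $U^{\ast}=\psi(t)\,dF$ with $F$ a nonconstant \emph{bounded harmonic function} on $M$, whose existence on a pinched negatively curved manifold is supplied by Anderson and Sullivan (Theorem \ref{thmharmonic}). Then $dF$ is exact and co-closed, hence a harmonic $1$-form by the trivial Hodge-theoretic computation \eqref{simple}; the convection term equals $\tfrac12 d|dF|^{2}$ by Lemma \ref{convection} and is absorbed into $dP$; and no symmetry of the metric is ever used. The entire analytic difficulty is relocated to quantitative estimates: the gradient decay $|\nabla F|\leq C e^{-\delta\rho}$ for any $\delta<a$ (Proposition \ref{exponentialdecayofgradientF}, via Yau's gradient estimate and the Perron barriers), and the finiteness of the dissipation via the Bochner formula and the $L^{1}$-finiteness of $\nabla|\nabla F|^{2}$ (Propositions \ref{L1finite} and \ref{finitedissipation}). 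You did correctly identify the role of the hypothesis $\tfrac12 b<a$ --- it is exactly the competition between an exponential decay rate below $a$ and a volume growth rate up to $b$ --- but the construction carrying that arithmetic must be curvature-pinching-robust rather than symmetry-based, and yours is not.
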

%\begin{cor}\label{corEuler}
%Let $b\geq a>0$. Let $M$ be a simply connected, complete $2$-dimensional Riemannian manifold with sectional curvature satisfying $-b^{2} \leq K_{M} \leq -a^{2}$.  Then there exist non-unique Leray-Hopf solutions of the Euler equation \eqref{Euler}.
%\end{cor}
\begin{cor}[Lack of the Liouville theorem in the hyperbolic setting]\label{cor2}
Let  $n\geq 2$, and $b\geq a>0$ and let $M$ be a simply connected, complete $n$-dimensional Riemannian manifold with sectional curvature satisfying $-b^{2} \leq K_{M} \leq -a^{2}$.  Then there exist nontrivial bounded solutions of \eqref{MNS}.   
\end{cor}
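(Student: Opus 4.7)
The plan is to read off the required bounded, nontrivial solution directly from the construction used in the proof of Theorem \ref{thm2}, together with an axisymmetric extension in higher dimensions. In dimension $n=2$, the proof of Theorem \ref{thm2} assembles, via geodesic polar coordinates $(r,\theta)$ around a fixed base point $o\in M$ (globally defined by the Cartan--Hadamard theorem, since $M$ is simply connected with $K_M<0$), an explicit divergence-free $1$-form $V^*$ whose corresponding velocity field $V$ has pointwise norm $\abs{V}_g$ depending only on the radial variable. Under the pinching $-b^{2}\leq K_M \leq -a^{2}$ and the usual Rauch comparison estimates on the Jacobi field $f(r,\theta)=\abs{\partial_{\theta}}_g$, this radial profile can be chosen bounded on $[0,\infty)$, and $V^*$ together with an appropriate pressure $P$ (absorbing $\overline{\nabla}_U U^*$ and the Hodge term) solves the stationary version of \eqref{MNS}. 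The stricter hypothesis $\frac{1}{2}b<a$ appearing in Theorem \ref{thm2} is used only to certify $V$ as a Leray--Hopf weak solution (i.e., for the $L^{2}$ and $\dot{H}^{1}$ integrability); for a merely bounded nontrivial solution, the bare pinching $b\geq a>0$ stated in the corollary suffices.

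For $n\geq 3$, I would repeat the same construction in geodesic spherical coordinates $(r,\omega)\in [0,\infty)\times S^{n-1}$ centered at $o$. Fix an infinitesimal rotation $\xi$ of $S^{n-1}$ about a codimension-two axis, and lift it to a vector field $\widetilde{\xi}$ on $M\setminus\{o\}$ that is tangent to each geodesic sphere and whose magnitude at $(r,\omega)$ is given by the relevant Jacobi field. The axial ansatz $U = h(r)\,\widetilde{\xi}/\abs{\widetilde{\xi}}_g$ then reduces the stationary \eqref{MNS} to an equation whose leading part is purely radial: the convection term $\overline{\nabla}_U U^*$ is radial by the symmetry of the ansatz and therefore absorbable into $dP$, while the Hodge Laplacian of the associated $1$-form becomes a second order linear ODE in $r$ which admits a bounded nontrivial profile $h$, exactly as in the two-dimensional construction.

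The main potential obstacle is that on a general Hadamard manifold the axisymmetry invoked in the ansatz is merely an ansatz symmetry rather than an isometry of $M$, so the reduction to a purely radial equation comes with angular error terms controlled by derivatives of the Jacobi field in the $\omega$ directions. However, the curvature pinching $-b^{2}\leq K_M \leq -a^{2}$ together with the Rauch comparison techniques already assembled in the proof of Theorem \ref{thm2} quantitatively control precisely these defects, so no new analytical input is required beyond what that proof already provides. This is consistent with the remark following the statement asserting that Corollary \ref{cor2} is a by-product of Theorem \ref{thm2} requiring no additional delicate estimates.
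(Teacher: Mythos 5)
Your proposal misidentifies the construction that Theorem \ref{thm2} actually uses, and as a result your extension to $n\geq 3$ goes down a path that neither matches the paper nor closes on its own. The solution in Theorem \ref{thm2} is not a rotational field with a radial profile; it is the gradient-type pair $U^{*}=\psi(t)\,dF$, $P=-\partial_{t}\psi\, F-\tfrac12|dF|^{2}$, where $F$ is a nonconstant \emph{bounded harmonic function} on $M$ supplied by the Anderson--Sullivan theorem (Theorem \ref{thmharmonic}), and $|dF|$ is in general genuinely angle-dependent. That existence theorem holds for every $n\geq 2$ under the pinching $-b^{2}\leq K_{M}\leq -a^{2}<0$, so the identical ansatz solves \eqref{MNS} verbatim in all dimensions: $\Delta U^{*}=0$ by \eqref{simple}, $\overline{\nabla}_{U}U^{*}=\tfrac12 d|dF|^{2}$ by Lemma \ref{convection}, the time derivative is exact, and everything is absorbed by $dP$. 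The only thing the corollary needs beyond this is boundedness of $|dF|$, which follows from Yau's gradient estimate (Theorem \ref{thmgradient}) applied to the bounded harmonic function $F$; the restrictions $n=2$ and $\tfrac12 b<a$ in Theorem \ref{thm2} were needed only for the $L^{2}$ and dissipation integrals, which you correctly note are irrelevant here. That is the entirety of the paper's proof.

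Your replacement construction for $n\geq 3$ --- the axial ansatz $U=h(r)\,\widetilde{\xi}/|\widetilde{\xi}|_{g}$ --- contains a genuine gap. On a Hadamard manifold with merely pinched, non-constant curvature there is no rotational isometry, so for such a swirl field $\overline{\nabla}_{U}U^{*}$ is not an exact form and cannot simply be ``absorbed into $dP$''; even in constant curvature the convection term of a swirl field is not purely radial. Likewise the Hodge Laplacian of the associated $1$-form does not reduce to an ODE in $r$: it carries angular terms involving derivatives of the Jacobi fields in the $\omega$ directions, and the claim that Rauch comparison ``controls precisely these defects'' is not an estimate --- comparison theorems bound the Jacobi fields themselves, not the residual one would need to vanish for the ansatz to be an exact solution. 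The remark you cite says the corollary needs no delicate estimates precisely because the harmonic-function construction is already dimension-independent; the fix is to discard the axisymmetric construction entirely and use that observation.
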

\begin{remark}
Note, the lower bound $\frac 12 b<a$ is no longer required in the corollary.  See the discussion below for why the lower bound is present in Theorem \ref{thm2}, which also explains why we do not need it in Corollary \ref{cor2}.
\end{remark}
%\begin{cor}
%Let  $n\geq 2$, and $b\geq a>0$ and let $M$ be a simply connected, complete $n$-dimensional Riemannian manifold with sectional curvature satisfying $-b^{2} \leq K_{M} \leq -a^{2}$.  Then
%there exist nontrivial bounded solutions of the Euler equation \eqref{Euler}. 
%\end{cor}
The above results are based on the abundance of the \emph{nontrival bounded harmonic functions} in the hyperbolic setting.  Such abundance is ensured by the works of Anderson \cite{Anderson} and Sullivan \cite{Sullivan}.   Our idea of trying to benefit from them was inspired by a remark of Tsai \cite[Remark 5.4]{Tsai}.  \cite{Tsai} eliminates a possibility of self-similar solutions to $\NS{\R^{3}}$ (which merely would satisfy the local energy inequality) by showing that existence of the self-similar solutions is equivalent to solving a certain stationary system.  Without assuming enough decay, one could construct nontrival solutions of the system in question in the form of $U = \nabla F$, and $P = -\frac{1}{2} |U|^{2}-a y\cdot U,$ where $F$ is a harmonic function on $\mathbb{R}^{3}$ and $a>0$.  In our case, due to \cite{Anderson} and \cite{Sullivan} we have a plethora of 
 \emph{nontrival bounded harmonic functions}, which gives us a basis for this article.
\newline\indent
%One of the main goals of our paper is to show the breakdown of the uniqueness of Leray-Hopf solutions in the $2$-dimensional space form $\mathbb{H}^{2}(-a^{2})$ by %proving the \emph{finite energy property} (ie $L^{\infty}(L^{2})$) and the \emph{finite dissipation} (ie $L^{2}(H^{1})$) 
The solution pairs $(U^{\ast} , P)$ we consider have the following form
\begin{equation}\label{strategery}
\begin{split}
U^{*} &= \psi(t) dF ,\\
 P &= -\partial_{t}\psi(t) F - \frac{1}{2} |dF|^{2} + 2a^{2}F , 
\end{split}
\end{equation}
where  $\psi(t) = \exp(-\frac{At}{2})$ for some $A \geq 2a^{2}$, and $F$ is a nontrival bounded harmonic function on $\mathbb{H}^{2}(-a^{2})$.
Verifying that $(U^{\ast}, P)$ solves $\NS{\H}$ is simple when we use Hodge theory (see Sections \ref{hodge} and \ref{proofs}) and Lemma \ref{convection}.
\footnote{In fact, taking solutions of the form $\psi(t)\grad F$ seems to be a well known convention, and we just happened to learn about it from \cite{Tsai}.  But also see \cite{QSZhang}.}
\newline\indent
Before we proceed any further, we remark here that the \emph{differential geometric work} in \cite{Anderson, Sullivan, AndersonSchoen} ensures the existence of nontrival bounded harmonic functions on a more general \emph{negatively curved Riemannian manifold} with suitable lower and upper bounds imposed on the sectional curvature. On the other hand, the existence of nontrival bounded harmonic function on $\mathbb{H}^{n}(-1)$ is an old classical result obtained through an integral representation formula with an explicit Poisson kernel on the Poincare ball model for the space form  $\mathbb{H}^{n}(-1)$ (for more details see the work of Hua \cite{Hua}).
However, such classical approach relies heavily on the explicit formula of the Poisson kernel derived from the group of isometries of the space form $\mathbb{H}^{n}(-1)$.  It seems that, as compared with the differential geometric approach of \cite{Anderson, Sullivan, AndersonSchoen}, such classical approach does not reveal the role played by the \emph{negative sectional curvature} of the hyperbolic manifold in the existence of nontrival bounded harmonic functions on $\mathbb{H}^{n}(-1)$.
\newline\indent
The last remark may explain why the proper generalization of the above mentioned classical result to the more general setting of negatively curved Riemannian manifolds was only established in the more recent works \cite{Anderson, Sullivan, AndersonSchoen}. Since we intend to show not only the break down of the uniqueness of Leray-Hopf solutions in the hyperbolic space setting, but more importantly the \emph{decisive role played by the negative sectional curvature of a hyperbolic manifold in causing such a breakdown}, we will \emph{unconditionally} choose the differential geometric framework as established in 
\cite{Anderson, Sullivan, AndersonSchoen} as the basic ground in this paper.
\newline\indent
Moreover, since the differential geometric machinery as demonstrated in \cite{Anderson}, \cite{Sullivan}, and \cite{AndersonSchoen} is designed to establish the existence of bounded nonconstant harmonic function on a general negatively curved Riemannian manifold which \emph{lacks the homogeneity property enjoyed by the space form $\mathbb{H}^{2}(-a^{2})$}, the only best way to justify the use of such differential geometric machinery in our paper is to cast our theorems, lemmas, propositions in the most general setting of a negatively curved Riemannian manifold at the \emph{starting point} of the paper. However, we slowly narrow down our setting by imposing further restrictions on our results whenever such restrictions are needed in proving the finite integral of a certain function or in handling the extra $\Ric$ term in the formulation of the Navier-Stokes equations. 
\newline\indent
As stated in Theorem \ref{thm2}, our non-uniqueness result also holds for a more general negatively curved Riemannian manifold with the lower bound $-b^{2}$ and the upper bound $-a^{2}$ of the sectional curvature satisfying $0 < \frac{b}{2} < a \leq b$, \emph{provided if the extra $\Ric$ term in the Navier-Stokes equation is dropped}. Indeed, the final restriction to the space form $\mathbb{H}^{2}(-a^{2})$ is required only because of the presence of $-2 \Ric$ in the formulation of the Navier-Stokes equation $\NS {M}$. 
\newline\indent
Now, we explain our strategy in establishing the finite energy and the finite dissipation of the time dependent velocity field $U^{*} = \psi(t) dF$ as given in equation \eqref{strategery}. We start our discussion by saying that our exposition is based on the material in the second chapter of the book \cite{redbook} by R. Schoen and S.-T. Yau.
%where one can find, among other things, a simple proof for the existence of a bounded non-constant harmonic function on a general negatively curved manifold with lower and upper bounds imposed on the sectional curvature.  
In the first section of the second chapter of \cite{redbook}, one sees that, with the prescribed function
$\phi \in C^{1}(S(\infty))$ given on the geometric boundary $S(\infty)$ (see Section \ref{balls}) attached to the $n$-dim complete, simply connected Riemannian manifold $M$ with sectional curvature satisfying $-b^{2} \leq K_{M} \leq -a^{2} < 0$, the bounded harmonic function $F$ on $M$ which satisfies the Dirichlet boundary condition $F|_{S(\infty)}=\phi$ is sought after by means of creating two barrier functions $\overline{\phi} - \alpha e^{-\delta \rho}$ and $\overline{\phi} + \alpha e^{-\delta \rho}$, which serve as the lower bound and upper bound for $F$ and where $\rho$ stands for the distance function on $M$ from a selected base point $O$ in $M$ (also see Section \ref{harmonic} and \ref{moreprelim}).  This is done in the spirit of the classical Perron's method. But such an application requires the subharmonicity of $\overline{\phi} - \alpha e^{-\delta \rho}$ and the superharmonicity of $\overline{\phi} + \alpha e^{-\delta \rho}$, whose validity critically depends on the
  following two facts (for details, see \cite{redbook}) 
\begin{itemize}
\item \emph{Laplace comparison theorem}:  If $K_{M} \leq -a^{2}$, then, the Laplacian of the distance function $\rho$ (from a selected base point $O$ in $M$) satisfies $\Delta \rho \geq (n-1)a \coth (a\rho) \geq (n-1)a$. 
\item the \emph{smooth} function $\overline{\phi}$ is constructed in a specific way so that we have $\overline{\phi}|_{S(\infty)} = \phi$ and that the oscillation of $\overline{\phi}$ over any geodesic ball $B_{x}(1)$ in $M$ has exponential decay of order $e^{-a\rho (x)}$, for any $x\in M$.
\end{itemize}
Due to the above two facts, it can be deduced that the choice of the $\delta > 0$, which ensures $\Delta [\overline{\phi} - \alpha e^{-\delta \rho}] \geq 0$ and $\Delta [\overline{\phi} + \alpha e^{-\delta \rho}] \leq 0$ (and hence the success of the Perron's method), has to satisfy the constraint $\delta < a$ (see Section \ref{constants}). 
\newline\indent
Based on what we learn from the above construction of the bounded nontrival harmonic function $F$ on $M$ we employ, in Section 3 the gradient estimate for harmonic functions due to S.-T. Yau \cite{Yau75} to show that the decay rate for $|\nabla F|(x)$ as $\rho(x)$ approaches infinity is at least of the order $e^{-\delta \rho (x)}$, for any $\delta < a$. That is we have
$$|\nabla F| \leq C(a, \delta )\|\phi'\|_{S(\infty)} e^{-\delta \rho},$$ on $M$.  Here, we want to mention that, with the hindsight from one of the two Harnack's inequalities as established in the second chapter of \cite{redbook}, one can argue that such an exponential decay for the gradient of our bounded harmonic function is more or less \emph{expected} and may not be surprising. We believe that such an exponential decay could be more or less well known to researchers working in geometric analysis. But in any case, we give a clean and simple proof of it in Section 3.
\newline\indent
Next we note that the exponential decay $|\nabla F| \leq C(a, \delta )\|\phi'\|_{S(\infty)} e^{-\delta \rho}$ not only gives, in the special case of the two dimensional space form $ \mathbb{H}^{2}(-a^{2}) $, the $L^{2}$
-finite property of $|\nabla F|$ on $ \mathbb{H}^{2}(-a^{2}) $ (and hence the finite energy property of the velocity field $U^{*} = \psi(t) dF$ as given in \eqref{strategery}), \emph{but also demonstrates the limitation which prevents us to draw the same $L^{2}$-finite property of $|\nabla F|$ in the setting of the three-dimensional space form $ \mathbb{H}^{3}(-a^{2}) $ }.  This limitation mainly comes from the fact that $\osc_{B_{x}(1)}\overline{\phi}$ only has exponential decay of order $e^{-a \rho(x)}$, which prevents us from choosing a $\delta > 0$ larger than $a$ in the Perron's method;  yet the \emph{growth rate} of the volume form on $ \mathbb{H}^{3}(-a^{2}) $ is exactly of the order $\frac{1}{a^{2}}\sinh^{2}(a\rho)$. For the first time, we encounter an obstacle which forces us to restrict our theory only to the case of $2$ dimensional negatively curved Riemannian manifold $M$ with $-b^{2} \leq K_{M} \leq -a^{2} < 0$.
\newline\indent
We observe that, up to this point, the lower bound condition $K_{M} \geq -b^{2}$ has not been involved in the big picture yet.  However one does eventually have to pay a special attention to the relative largeness of $b$ when compared with $a$ since the lower bound $K_{M} \geq -b^{2}$ of the sectional curvature determines the \emph{growth rate} of the volume form of $M$ via the comparison theorem for Jacobi fields in differential geometry.  More precisely, with the condition  $K_{M} \geq -b^{2}$ imposed, the growth rate of the volume form of the $2$-dim negatively curved manifold $M$ is at most $\frac{1}{b} \sinh(b \rho)$. Yet, again, the decay rate of $|\nabla F|$ is of the order $e^{-\delta a}$, with any $\delta <a$. As a result, the survival of the property $\|\nabla F\|_{L^{2}(M)} <\infty$ critically depends on the competition between the decay rate $e^{-\delta a}$ of $|\nabla F|$ and the (possible) worst growth rate $\frac{1}{b} \sinh(b \rho)$ of the volume form of $M$. 
 This fully explains the need to further restrict our setting by imposing the condition $0< \frac{b}{2} < a \leq b$, so that \emph{the parameter $\delta$ can fit within the range of $\frac{b}{2} < \delta < a$, which is enough to ensure the survival of the $L^{2}$-finite property of $|\nabla F|$ on the $2$-dim negatively curved manifold $M$}.
\newline\indent
Once the $L^{2}$-finite property of $|\nabla F|$ is established for $2$-dim Riemannian manifold satisfying $-b^{2} \leq K_{M} \leq -a^{2}$ \emph{and} $0<\frac{b}{2} < a \leq b $, we proceed to show the finite dissipation of $U^{*} = \psi(t) dF$ \emph{under the same setting} in Sections 4 and 5 of our paper, which are the most delicate parts of our work. In regard to this, our basic idea lies in the structure of the following important formula in differential geometry \cite{redbook}
\begin{equation}\label{crucialformulaIntroduction}
\Delta(|\nabla F|^{2})(x) = 2 \overline{g} (\overline{\nabla}(\nabla F) , \overline{\nabla}(\nabla F) )(x) + 2 \sum \partial_{i}F(x) \partial_{i}(\Delta F)(x) + 2 Ric (\nabla F, \nabla F)(x).
\end{equation}
The formula is obtained by performing a calculation with respect to the normal geodesic coordinates about the selected point $x$ in our 2-dim negatively curved Riemannian manifold $M$.
Since $\Delta(|\nabla F|^{2}) = div \{\nabla (|\nabla F|^{2}     )\}$, if \emph{we can show that the $L^{1}$-norm of $|\nabla |\nabla F|^{2}|$ is finite}, then, we will immediately have
$\int_{M} \Delta(|\nabla F|^{2}) =0$. Hence, it follows from the above formula that (see Proposition 5.1 of Section 5 for the technical details) 
\begin{equation}\label{consequencebackbone}
 \int_{M} \overline{g} (\overline{\nabla}(\nabla F) , \overline{\nabla}(\nabla F))= - \int_{M} Ric (\nabla F , \nabla F) \leq b^{2} \int_{M} |\nabla F|^{2} , 
\end{equation}
which gives the finiteness of the dissipation term $\int_{M} \overline{g} (\overline{\nabla}U , \overline{\nabla}U) = [\psi(t)]^{2}  \int_{M} \overline{g} (\overline{\nabla}(\nabla F) , \overline{\nabla}(\nabla F))$) as required in the Leray-Hopf formulation.
\newline\indent
Next, the required $L^{1}$-finite property of $\abs{\nabla |\nabla F|^{2}}$ is established with the assistance of a covering Lemma 4.1.  
Due to the fact that $\Ric (\nabla F, \nabla F) \geq -b^{2} |\nabla F|^{2}$ and that $\Delta F = 0$, formula \eqref{crucialformulaIntroduction} ensures the subharmonicity of $|\nabla F|^{2} + A e^{-2\delta \rho}$ on $\{x \in M : \rho (x) > \frac{R(\delta)}{b}\}$ (with $\frac{b}{2} < \delta < a$), for some sufficiently large $A > 0$ and $R(\delta ) > 0$, both dependent on $\delta$, which in turns allows us to obtain (see the proof of Proposition 4.4 for the technical details)
\begin{itemize}
\item  the integral of $|\nabla\{ |\nabla F|^{2} + A e^{-2\delta \rho}   \}|$ over any geodesic ball $B_{x}(3(1+\frac{1}{b}))$ \emph{lying within the outer region  $\{x \in M : \rho (x) > \frac{R(\delta)}{b}\}$} is bounded above by $C(a,b; \|\phi\|_{\infty})$ $e^{-2 \delta \rho (x)}$.
\end{itemize} 
Next, since $|\nabla\{ |\nabla F|^{2} + A e^{-2\delta \rho}   \}|$ is continuous on $M$, to see the extent to which the above fact can ensure the finiteness of the integral 
$\int_{M} |\nabla\{ |\nabla F|^{2} + A e^{-2\delta \rho}   \}| $, we just have to further decompose the outer region  $\{x \in M : \rho (x) > \frac{R(\delta)}{b}\}$ into a countable number of rings $\{ x \in M : k-1 \leq \rho (x) \leq k + 1\}$, indexed by sufficiently large positive integers $k$; and argue, as in our covering Lemma 4.1 that, due to the lower bound $-b^{2}$ on the sectional curvature $K_{M}$, it is sufficient to use a total of $[\pi e^{bk}] + 1$ geodesic balls with radius $3(1 + \frac{1}{b})$ to cover the ring   
$\{ x \in M : k-1 \leq \rho (x) \leq k + 1\}$, which in turns ensures that (see the proof of proposition 4.4 for more details)

\begin{itemize}
\item the inequality $\int_{B_{x}(3(1+\frac{1}{b}))}|\nabla\{ |\nabla F|^{2} + A e^{-2\delta \rho}   \}| \leq C(a,b; \|\phi\|_{\infty})$ $e^{-2 \delta \rho (x)}$ for any geodesic ball  $B_{x}(3(1+\frac{1}{b})) \subset \{x \in M : \rho (x) > \frac{R(\delta)}{b}\}$ is strong enough to deduce that the integral of $|\nabla\{ |\nabla F|^{2} + A e^{-2\delta \rho}   \}|$ over the outer region $\{x \in M : \rho (x) > \frac{R(\delta)}{b}\}$ is finite (thanks to the condition $0<\frac{b}{2}< a$, which allows $\delta$ to be within the range $\frac{b}{2} < \delta < a$).
\end{itemize}
The above observation gives $\int_{M} |\nabla\{ |\nabla F| + A e^{-2\delta \rho}   \}| < \infty$.  Due to the fact that $\int_{M}e^{-2\delta \rho} < \infty$, which is ensured by the condition $2\delta > b$ (see inequality \eqref{est2}), we finally conclude that $\int_{M} |\nabla\{ |\nabla F|^{2}    \}| < \infty $, which is a \emph{backbone ensuring the correctness of equation \eqref{consequencebackbone}} .
\newline\indent
\subsection{Organization of the article}
In order to make the paper as self-contained as possible in Section \ref{prelim} we collect some facts from the differential geometry and in particular some background specific to the negatively curved manifolds.
\newline\indent
Once that is done, we are ready to establish fundamental statements needed for the proof of Theorems \ref{mainthm}, \ref{thm2}, and their corollaries.  They are:
\begin{itemize}
\item [1)] Exponential decay of the gradient of bounded harmonic functions on negatively curved manifold--Section 3.
\item [2)] Finiteness of $\|\nabla \abs{\nabla F}^{2}\|_{L^{1}(\mathbb{H}^{2}(-a^{2}))}$--Section 4.
\item [3)] Global energy inequality tools--Section 5.
\end{itemize}
Section 6 contains the proofs of the main results. 
\section*{Acknowledgements}
The first author thanks the Department of Mathematics at University of Toronto for its hospitality during the author's visit during which part of this work was carried out as well as
the Institute for Mathematics and its Applications, where the author is in residence.
\section{Preliminaries}\label{prelim}
In this section we gather all the necessary tools from the literature needed in our proof.  A lot of it relies on \cite{redbook}, and we list it here for the convenience of the reader.  We also develop some precise statements regarding the volume forms on the negatively curved manifolds.
\subsection{The Levi-Civita connection, deformation tensor and Ricci curvature}\label{diffgeom}
Here we recall some general background from Riemannian geometry (see for example \cite{Jost}, \cite{Nishikawa}, \cite{Lee}).  In particular, we take a closer look at the deformation tensor mentioned in the introduction.  
%We also would like to bring the readers' attention to equations \eqref{samenorm} and \eqref{ricci2} which are used in the proof of our main results.
\newline\indent 
Let $M$ be an $n$-dimensional complete Riemannian manifold, and $TM$ and $T^{\ast}M$ tangent and cotangent bundles on $M$ respectively.  Let $g$ be a Riemannian metric,  $g (\cdot, \cdot) \in C^{\infty} (M, S^{2}T^{*}M)$, where $S^{2}T^{*}M$ denotes symmetric bilinear forms on $TM$, and $\overline{\nabla}$ the Levi-Civita connection on $(M,g)$,
$$
{\overline{\nabla}} : C^{\infty} (M,TM) \rightarrow C^{\infty}(M, T^{*}M \otimes TM).
$$
Let $\langle \cdot ,\cdot \rangle_{TM\otimes T^{*}M}$ be the natural pairing between vector fields and $1$-forms on $M$.  Given a vector field $X  \in C^{\infty}(M; TM)$, using the metric $g$ we can define $X^{\ast}\in  C^{\infty}(M; T^{\ast}M)$ by
\be\label{lower}
 \langle Y, X^{\ast}\rangle_{TM\otimes T^{\ast}M}=g(X,Y),\quad Y \in  C^{\infty}(M; TM).
\ee
Similarly, given a $1$-form $\omega \in C^{\infty}(M; T^{\ast}M)$ we can define $v_{\omega} \in  C^{\infty}(M; TM)$ by
$$g(v_{\omega}, Y) = \langle Y, \omega\rangle_{TM\otimes T^{\ast}M}.$$ 
Therefore, the Riemannian metric $g$ gives rise to the natural identification $C^{\infty}(M; TM) = C^{\infty}(M; T^{*}M) $.  In particular, if $F$ is a smooth function on $M$, and $d$ is the exterior derivative we have
\be\label{gradFdF}
(\nabla F)^{\ast}=dF.
\ee
Next $g$ also induces its dual metric ${g^{\ast}}(\cdot, \cdot) \in C^{\infty} (M, S^{2}TM)$ by
\[
{g^{\ast}}(\omega,\gamma)=g(v_{\omega},v_{\gamma}),\quad \omega, \gamma \in T^{\ast}M.
\]
Then note
\be\label{samenorm}
\abs{dF}^{2}= {g^{\ast}}(dF,dF)=g(\nabla F,\nabla F)=\abs{\nabla F}^{2}.
\ee
Now by using $g$ again, we can also induce the corresponding positive definite inner product $\overline{g}( \cdot ,\cdot )$ on the bundle  $T^{*}M\otimes T^{*}M $, which is \emph{precisely characterized} by the following condition
\begin{itemize}
\item Let $ e_{1}, e_{2},..., e_{n}$ to be a local orthonormal moving frame for $TM$, and let $\theta^{1} , \theta^{2} , ... \theta^{n}$ to be the corresponding dual frame for $T^{*}M$, then, the list $\{ \theta^{i}\otimes \theta^{j} : 1 \leq j,k \leq n   \}$ is orthonormal with respect to   $\overline{g}( \cdot ,\cdot )$.
\end{itemize}
Next, the Levi-Civita connection $\overline{\nabla}$ on the tangent bundle $TM$ induces the \emph{associated Levi-Civita connection}  $\overline{\nabla}$ on $T^{*}M$ by
\begin{align}\label{dlc}
&\overline{\nabla} : C^{\infty} (M,T^{*}M) \rightarrow C^{\infty}(M, T^{*}M \otimes T^{*}M),\\
&(\overline{\nabla}_{X}\omega)(Y)=(\overline{\nabla}_{X}v_{\omega})^{\ast}(Y),\quad \omega \in T^{\ast}M, X, Y \in TM.
\end{align}
Notice, for the sake of convenience and keeping with the conventions, we use the same abbreviation $\overline{\nabla}$ to denote both the Levi-Civita connection on $TM$ and the induced connection on $T^{*}M$.  No possible confusion should arise since the meaning of the symbol $\overline{\nabla}$ will be clear from the context.  In particular, we have, by the definition of the induced connection $\overline{\nabla}$ on $T^{*}M$, the property that 
\be\label{samenotation}
\overline{\nabla}_{X}Y^{*} = (\overline{\nabla}_{X}Y)^{*},
\ee
for any smooth vector fields $X$, $Y$ on $M$. 
\newline\indent
We now turn our attention to the deformation tensor
\[ 
\Def : C^{\infty}(M, TM) \rightarrow C^{\infty} (M, S^{2}T^{*}M),
\]
defined by (see for example \cite{DindosMitrea})
\begin{equation}\label{defforDefvector}
(Def X)(Y,Z) = \frac{1}{2}\left \{  g(\overline{\nabla}_{Y}X ,Z )  + g(\overline{\nabla}_{Z}X  , Y ) \right\},\quad  Y, Z \in C^{\infty}(M, TM).
\end{equation}
%So, the association of $Def X \in C^{\infty} (M, S^{2}T^{*}M)$ to each given $X \in C^{\infty}(M, TM)$ gives rise to the operator
Using the natural identification of the space of vector fields with the space of $1$-forms on $M$ discussed above, the operator $\Def$ can be regarded as the operator from  $C^{\infty}(M, T^{*}M)$ to $C^{\infty} (M, S^{2}T^{*}M)$, and can be defined \emph{alternatively} as follows:
\begin{defn}
For any $1$-form $\omega \in C^{\infty}(M,T^{*}M)$, the deformation tensor $\Def \theta \in C^{\infty} (M, S^{2}T^{*}M)$ is given by 
\begin{equation}\label{defforDefform}
(\Def \omega )(Y,Z) =  \frac{1}{2} \{\langle Z , \overline{\nabla}_{Y}\omega \rangle_{TM\otimes T^{*}M}  +  \langle Y , \overline{\nabla}_{Z}\omega \rangle_{TM\otimes T^{*}M}  \} ,
\end{equation}
for any $Y, Z \in C^{\infty}(M ,TM)$.
\end{defn}
In the sequel we also need the following.  If we express $\omega \in C^{\infty}(M,T^{*}M) $ locally as $\omega = \sum_{a} \omega_{a} dx^{a}$, then $\Def \omega$ can locally be expressed as 
\begin{equation}\label{definc}
Def \omega = \sum_{j,k}\frac{1}{2} (\omega_{j;k} + \omega_{k;j}) dx^{j}\otimes dx^{k},
\end{equation}
where $\omega_{j;k} = \partial_{k}\omega_{j} - \Gamma_{jk}^{l}\omega_{l},$ where $\Gamma_{jk}^{l}$ are the Christoffel symbols.
\newline\indent
Before we go to the next subsection, we briefly discuss the Ricci curvature on a complete $n$-dimensional Riemannian manifold $M$.  Recall, the Ricci curvature is a symmetric tensor $\Ric \in C^{\infty}(M , S^{2}T^{*}M)$ defined as follows
\[
\Ric_{p}(X,X) = \sum_{1\leq i \leq n-1} K_{M}(X,e_{i}),\quad p \in M,
\]
where $e_{1}$, $e_{2}$, ... $e_{n-1}$ are some unit vectors in $T_{p}M$ such that $\{X, e_{1}, ...e_{n-1}\}$ forms an orthonormal basis for $T_{p}M$.
In many occasions, we use the symbol $\Ric(M)$ for $\Ric$. Moreover, if we write $\Ric(M) \geq -b^{2}$, it means that $\Ric(X,X) \geq -b^{2} |X|^{2}$, for all $X \in C^{\infty}(M, TM)$. Moreover, it is clear that, for $2$-dimensional Riemannian manifold $M$, the notion of Ricci curvature $\Ric$ coincides with the sectional curvature $K_{M}$.
\newline\indent
Besides the Ricci curvature tensor $\Ric \in C^{\infty}(M , T^{*}M \otimes T^{*}M)$, we also need to consider the Ricci operator $\Ric : C^{\infty}(M,T^{*}M) \rightarrow C^{\infty}(M,T^{*}M)$ sending the space of $1$-forms into itself, which is defined by
\[
\Ric(u^{\ast}) = \sum_{a,b} \eta^{a} g( R(e_{a} , e_{b})(e_{b}) ,u ),
\]
where  $R(\cdot, \cdot)$ is the Riemannian curvature tensor, $e_{1}$, $e_{2}$, ...$e_{n}$ is a local orthonormal moving frame for $TM$, and $\eta^{1}$, $\eta^{2}$,...$\eta^{n}$ stand for the associated dual frame for $T^{*}M$ with respect to $e_{1}$, $e_{2}$, ...$e_{n}$.

%We remark that the above way of characterizing the Ricci operator $\Ric : C^{\infty}(M,T^{*}M) \rightarrow C^{\infty}(M,T^{*}M)$ is independent of the choice of the local orthonormal frame. Now, for any $u = \sum u^{a}e_{a} \in C^{\infty}(TM)$, we have $u^{*} = \sum u^{a} \eta^{a} \in C^{\infty}(TM)$, with $u^{a} = <e_{a}, u^{*}>_{TM\otimes T^{*}M} = g(u,e_{a})$. So, we have the following computation
%\begin{equation*}
%\begin{split}
%Ric (u^{*}) & = - \sum_{a,b} \eta^{a} \{i_{e_{b}}[R(e_{a} , e_{b})(u^{*})]\} \\
%& = - \sum_{a,b} \eta^{a} <e_{b} ,R(e_{a} , e_{b})(u^{*}) >  \\
%& = - \sum_{a,b} \eta^{a} <e_{b} ,[R(e_{a} , e_{b})(u)]^{*} >  \\
%& = - \sum_{a,b} \eta^{a} g( R(e_{a} , e_{b})(u) ,e_{b} ) \\
%& =
%\end{split}
%\end{equation*}
%
%in which we employ the property $g(R(X,Y)Z,W)= - g(R(X,Y)W,Z)$ of the Riemannian curvature tensor in the last equality sign.\\
In the case of the space form $\mathbb{H}^{n}(-a^{2})$ with sectional curvature $-a^{2}$, we have $R(e_{a} , e_{b})(e_{b}) = -a^{2} e_{a}$, for any local orthonormal frame $e_{1}$, $e_{2}$, ...$e_{n}$ of $T\mathbb{H}^{n}(-a^{2})$. Hence, in particular we have the following fact  
\begin{equation*}
\begin{split}
\Ric(u^{*}) &= \sum_{a,b} \eta^{a} g( R(e_{a} , e_{b})(e_{b}) ,u )\\
& = - a^{2} (n-1)\sum_{a}\eta^{a} g(e_{a},u)\\
& = -a^{2} (n-1) \sum_{a}u^{a}\eta^{a}\\
& = -a^{2} (n-1) u^{*}. 
\end{split}
\end{equation*}
We end this section with a quick summary of basic facts about the Ricci curvature $\Ric \in C^{\infty}(M , T^{*}M \otimes T^{*}M)$ and the Ricci operator $\Ric : C^{\infty}(M,T^{*}M) \rightarrow C^{\infty}(M,T^{*}M)$. In particular,
\begin{align}
\Ric_{p}(X,X)&=K_{M}(p)\abs{X}^{2},\ \ p\in M, X\in  C^{\infty}(M,TM),\  dim M=2,\label{ricci1}\\
\Ric(\omega)&=-(n-1)a^{2}\omega,\ \ \omega \in  C^{\infty}(H^{n}(-a^{2}),T^{\ast}H^{n}(-a^{2})), a>0, n\geq2 .\label{ricci2}
\end{align}

\subsection{Estimates and identities used}\label{moreprelim}
%Let $O$ be any point in $M$.  By the Cartan-Hadamard Theorem the exponential map $\exp_{O}: T_{O}M\rightarrow M$ is a diffeomorphism.  It follows we can define a \emph{global} normal coordinate chart based at $O$.  

As usual, we start with a complete $n$-dimensional Riemannian manifold $M$, and consider the geodesic normal coordinates on $M$ about a selected base point $O$. 
One of the fundamental properties of the normal coordinates, which we use in computations, is that the Christoffel symbols all \emph{vanish at $O$}:
\be\label{gamma0}
\Gamma^{i}_{jk}=0,
\ee
(see for example \cite{Lee} for more on normal coordinates).
\newline\indent
In the case of a \emph{complete, simply connected, $n$-dimensional Riemannian manifold $M$ with sectional curvature satisfying $-b^{2} \leq K_{M} \leq \-a^{2} < 0$}, the Cartan-Hadamard theorem ensures that the geodesic normal coordinates on $M$ about any selected base point $O \in M$ must be globally defined, which also implies in particular that $M$ is \emph{diffeomorphic} to $\mathbb{R}^{n}$. Moreover, in this case, between any two points $p$, $q$ in such a Riemannian manifold $M$, the geodesic joining $p$ and $q$ is \emph{unique}, and hence we just use the symbol $\overline{pq}$ to denote the unique geodesic joining $p$ and $q$, and $\abs{\overline{pq}}$ stands for the length of the geodesic joining $p$ and $q$.  
\newline\indent
Define the distance function from a point $p \in M$ to a point $x$ by
\[
\rho_{p}(x)\equiv \abs{\overline{px}}.
\]
We usually omit the subscript $p$ and simply write $\rho(x)$ since the base point is clear from the context.
\begin{lemma}[Properties of the distance function{ \cite[Ch. 1]{redbook}}]
The distance function $\rho(x)$ defined as above is smooth\footnote{For more general manifolds $M$, $\rho(x)$ is smooth on $M\setminus Cut(p)$.  See \cite{redbook} for precise definitions and statements.} on all of $M,$ where $M$ is any Riemannian n-manifold such that the exponential map defines the diffeomorphism between $M$ and $\R^{n}$. In addition,  
\begin{align}
\abs{\nabla \rho}^{2}&=1\label{edist1},\\
\Delta \rho&=(n-1) k \coth{k\rho}\quad\mbox{if $M$ has constant sectional curvature $=-k^{2}$,}\label{edist2}\\
\Delta \rho &\geq (n-1)a\cdot \coth (a\rho) \geq (n-1)a\quad\mbox{if $\Ric(M) \leq -a^{2}$},\label{edist3}\\
\Delta \rho & \leq (n-1) b\cdot \coth (b\rho ) \leq (n-1)\frac{1+b\rho}{\rho}\quad\mbox{if $\Ric(M) \geq -(n-1)b^{2}$.} 
\end{align}
\end{lemma}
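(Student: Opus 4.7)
I would prove the lemma by splitting it into its four constituent claims and attacking each with the standard Riemannian-geometric toolkit. Everything hinges on working in geodesic polar coordinates centered at $p$, which exist globally here because $\exp_{p}:T_{p}M\to M$ is a diffeomorphism; thus there is no cut locus, and we can use radial geodesics and the associated Jacobi-field/Riccati machinery without local restrictions. The only genuine caveat is that $\rho$ fails to be smooth at the base point $p$ itself, so ``smooth on all of $M$'' should be read as ``smooth on $M\setminus\{p\}$''; this is immediate from the fact that $\rho(x)=|\exp_{p}^{-1}(x)|$ is a composition of a smooth diffeomorphism with the Euclidean norm.

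For $|\nabla\rho|^{2}=1$, I would invoke the Gauss Lemma: radial geodesics from $p$ are orthogonal to the geodesic spheres $\{\rho=r\}$, so in geodesic polar coordinates the metric takes the warped form $g=d\rho^{2}+h_{\rho}$, where $h_{\rho}$ is a metric on the unit sphere depending on $\rho$. In these coordinates $\nabla\rho=\partial_{\rho}$, whose length is $1$. For the explicit formula in the constant curvature case, I would use that for a radial function $f(\rho)$ one has
\begin{equation*}
\Delta f(\rho)=f''(\rho)+\frac{\partial_{\rho}\sqrt{\det g}}{\sqrt{\det g}}\,f'(\rho),
\end{equation*}
so $\Delta\rho$ equals the logarithmic radial derivative of $\sqrt{\det g}$. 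For $M$ of constant sectional curvature $-k^{2}$, solving the Jacobi equation along a radial geodesic gives $\sqrt{\det g}=\bigl(\sinh(k\rho)/k\bigr)^{n-1}\sqrt{\det g_{S^{n-1}}}$, and the computation yields $\Delta\rho=(n-1)k\coth(k\rho)$.

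For the two comparison inequalities, I would set up the Riccati equation satisfied by the second fundamental form $S(\rho)$ of the geodesic spheres: along a unit-speed radial geodesic $\gamma$,
\begin{equation*}
S'+S^{2}+R_{\gamma}=0,
\end{equation*}
where $R_{\gamma}(\cdot)=R(\cdot,\dot\gamma)\dot\gamma$ is the tidal operator. The Hessian of $\rho$ vanishes in the radial direction and equals $S$ on the orthogonal complement, so taking the trace gives $(\Delta\rho)'+\mathrm{tr}(S^{2})+\Ric(\dot\gamma,\dot\gamma)=0$, and Cauchy--Schwarz yields $\mathrm{tr}(S^{2})\geq(\Delta\rho)^{2}/(n-1)$. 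Combined with the Ricci bound $\Ric\geq-(n-1)b^{2}$, this gives the differential inequality $(\Delta\rho)'+(\Delta\rho)^{2}/(n-1)\leq(n-1)b^{2}$, whose model solution with the asymptotic boundary condition $\Delta\rho\sim(n-1)/\rho$ as $\rho\to 0^{+}$ (valid on any Riemannian manifold) is $(n-1)b\coth(b\rho)$; a standard ODE comparison then gives the upper bound. The elementary inequality $\coth(b\rho)\leq(1+b\rho)/(b\rho)$ gives the final form. For the lower bound $\Delta\rho\geq(n-1)a\coth(a\rho)$, I would use Rauch/Hessian comparison against the model space of constant curvature $-a^{2}$ under the sectional (or, in $2$D, equivalently Ricci) upper bound $K_{M}\leq-a^{2}$, which forces each eigenvalue of $S$ to dominate its model-space counterpart, so tracing preserves the inequality.

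The main obstacle is handling the singular initial data for the Riccati ODE at $\rho=0$, where $\Delta\rho$ blows up like $(n-1)/\rho$; I would handle this by starting the comparison at a small $\rho_{0}>0$ (using the asymptotic expansion $\Delta\rho=(n-1)/\rho+O(\rho)$ which holds on any manifold via the Taylor expansion of the metric in normal coordinates) and then letting $\rho_{0}\to 0^{+}$. Since all four statements are classical and documented in \cite[Ch.~1]{redbook}, I would cite that reference for the details of the Riccati comparison rather than reproduce it.
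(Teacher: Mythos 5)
The paper gives no proof of this lemma at all: it is quoted verbatim from Chapter 1 of Schoen--Yau and used as a black box, so there is nothing to match your argument against line by line. Your sketch is the standard and correct derivation of those facts, and I would only flag two points where you are (rightly) more careful than the statement itself. First, your caveat that $\rho$ is smooth only on $M\setminus\{p\}$ is correct; the lemma's ``smooth on all of $M$'' is a harmless overstatement, since $\rho(x)=|\exp_p^{-1}(x)|$ has a conical singularity at the base point even in $\mathbb{R}^n$. Second, and more substantively: the lower bound $\Delta\rho\geq (n-1)a\coth(a\rho)$ does \emph{not} follow from the stated hypothesis $\Ric(M)\leq -a^2$ alone when $n\geq 3$, because the traced Riccati inequality points the wrong way for lower bounds; one needs the Hessian (Rauch) comparison under the sectional bound $K_M\leq -a^2$, exactly as you say. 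Since the paper only ever applies the lemma either on $2$-dimensional manifolds (where Ricci and sectional curvature coincide) or on space forms, this discrepancy is immaterial to the paper, but your substitution of the sectional hypothesis is the correct fix. The rest --- Gauss lemma for $|\nabla\rho|^2=1$, the radial Laplacian formula with volume density $\left(\sinh(k\rho)/k\right)^{n-1}$ for the constant-curvature identity, the Riccati comparison with the limit $\rho_0\to 0^+$ to handle the singular initial condition, and the elementary bound $\coth(b\rho)\leq (1+b\rho)/(b\rho)$ --- is all sound.
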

 
\begin{lemma}\label{angleexpdecayversiontwo}\cite[p.35]{redbook}
Let $M$ be a $n$-dimensional simply connected, complete Riemannian manifold with sectional curvature satisfying $-b^{2} \leq K_{M} \leq -a^{2}$. Let $O \in M$ to be a selected based point, and let $x_{1}$, $x_{2}$ be two points in $M$ for which $|\overline{Ox_{1}}| = |\overline{Ox_{2}}| = R$, for some $R > 1$. Moreover, let $\theta = \angle (\overline{Ox_{1}} , \overline{Ox_{2}})$. Then there exists a sufficiently large universal constant $R_{0} > 1$, \emph{depending only on $a$, $b$, and $n$} such that, whenever $|\overline{Ox_{1}}| = |\overline{Ox_{2}}| = R \geq R_{0}$, we have the following 
\begin{equation}\label{estimateforx1x2} 
2R + \frac{2}{a} (\log \theta -1) \leq |\overline{x_{1}x_{2}}| \leq 2R + \frac{2}{b} (\log \theta + 1).
\end{equation}
    
\end{lemma}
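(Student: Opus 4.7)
The plan is to reduce the problem to the explicit computation of distances in the two model spaces $\mathbb{H}^{n}(-a^{2})$ and $\mathbb{H}^{n}(-b^{2})$ via comparison theorems, and then extract the stated asymptotics from the hyperbolic law of cosines as $R\to\infty$. Let $d_{k}$ denote the distance, inside the model space of constant sectional curvature $-k^{2}$, between two points at distance $R$ from a fixed base point and subtending an angle $\theta$. Because $M$ is complete, simply connected, and has sectional curvature sandwiched as $-b^{2}\leq K_{M}\leq -a^{2}<0$, the Cartan--Hadamard theorem applies and we may invoke the Toponogov hinge comparison together with its Alexandrov upper-curvature counterpart. Applied to the hinge formed at $O$ by the two geodesic segments $\overline{Ox_{1}}$ and $\overline{Ox_{2}}$ (with side lengths $R,R$ and included angle $\theta$), these give
\[
d_{a}\leq |\overline{x_{1}x_{2}}|\leq d_{b},
\]
reducing matters to bounding $d_{a}$ from below and $d_{b}$ from above.

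The next step is the hyperbolic law of cosines in $\mathbb{H}^{n}(-k^{2})$, which yields the exact identity
\[
\cosh(k\, d_{k})=1+\sinh^{2}(kR)(1-\cos\theta).
\]
From here I would carry out an asymptotic analysis for large $R$. Using $1-\cos\theta=2\sin^{2}(\theta/2)$, Jordan's inequality $\frac{\theta}{\pi}\leq \sin(\theta/2)\leq \frac{\theta}{2}$ on $[0,\pi]$, together with the elementary two-sided bounds
\[
\tfrac{1}{2}e^{kR}(1-e^{-2kR})\leq \sinh(kR)\leq \tfrac{1}{2}e^{kR},\qquad \tfrac{1}{2}e^{kd_{k}}\leq \cosh(kd_{k})\leq e^{kd_{k}},
\]
one sandwiches $e^{kd_{k}}$ between two positive constant multiples of $e^{2kR}\theta^{2}$. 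Taking logarithms and dividing by $k$ produces the asymptotic identity
\[
d_{k}=2R+\frac{2}{k}\log\theta+\frac{O(1)}{k},
\]
with the $O(1)$ residual bounded by a universal constant depending only on $a,b,n$ whose absolute value tends to a limiting constant of order $\log(2\pi^{2})$ as $R\to\infty$. Choosing $R_{0}$ large enough to confine the residual within $[-2,2]$ upgrades this identity into the two inequalities claimed, upon substituting $k=a$ for the lower bound and $k=b$ for the upper bound.

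The hard part will be tightening these absolute constants so that the residual really fits inside the narrow $\pm 1$ slack prescribed in the statement rather than the naive slack of order $\log(2\pi^{2})/k$ produced by the sandwich above. Concretely one must carefully absorb the correction factors $(1-e^{-2kR})$ and $(1+e^{-2kd_{k}})$ into the constant, using that both approach $1$ as $R_{0}$ grows, and exploit the sharper Jordan lower bound $\sin(\theta/2)\geq \theta/\pi$ on $(0,\pi]$. A secondary subtlety is the regime where $\theta$ is exponentially small in $R$; there the asymptotic balance in the law of cosines changes, but both $d_{k}$ and the target right-hand side are $O(1)$, and the bounds can be verified directly from the triangle inequality $|\overline{x_{1}x_{2}}|\leq 2R$.
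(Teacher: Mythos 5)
The paper itself contains no proof of this lemma: it is quoted from \cite[p.~35]{redbook}, so there is no internal argument to compare against. Your skeleton is nonetheless a legitimate (and arguably cleaner) route than the one in that reference, which estimates arc length along distance spheres via Jacobi--field comparison rather than invoking Toponogov. Cartan--Hadamard plus the two hinge comparisons do give $d_{a}\leq |\overline{x_{1}x_{2}}|\leq d_{b}$ in the directions you state, and the law of cosines $\cosh(kd_{k})=1+2\sinh^{2}(kR)\sin^{2}(\theta/2)$ is the correct exact input, so the reduction to the model spaces is sound.

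The gap is exactly the step you defer as ``the hard part,'' and it cannot be closed, because with the constants $\pm 1$ and with $\theta$ rather than $2\sin(\theta/2)$ the stated inequality fails at both ends of the angle range. For the lower bound take $M=\mathbb{H}^{2}(-a^{2})$ and $\theta=\pi$: the points are antipodal on the sphere of radius $R$, so $|\overline{x_{1}x_{2}}|=2R$ exactly, while the claimed lower bound is $2R+\tfrac{2}{a}(\log\pi-1)>2R$ because $\log\pi\approx 1.145>1$; your plan to use $\sin(\theta/2)\geq\theta/\pi$ is precisely what produces the unabsorbable loss $\tfrac{2}{a}\log\pi>\tfrac{2}{a}$. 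For the upper bound, if $\theta\ll e^{-bR}$ then $2R+\tfrac{2}{b}(\log\theta+1)\to-\infty$ while $|\overline{x_{1}x_{2}}|>0$, so the target is not $O(1)$ there, and your proposed patch points the wrong way: the triangle inequality gives $|\overline{x_{1}x_{2}}|\leq 2R$, which is useless against a negative right-hand side. What your computation does establish is the corrected statement with $\theta$ replaced by $2\sin(\theta/2)$ (equivalently, the stated one under a restriction such as $e^{-aR}\leq\theta\leq 1$, after enlarging $R_{0}$ so that the $(1-e^{-2kR})$ factors are harmless); that corrected version, combined with the monotonicity of $d_{k}$ in $\theta$, is all that the paper actually uses, since Lemma \ref{coveringlemma} and Step 1 of Section \ref{harmonic} only ever invoke the estimate for angles of size about $e^{-b\rho}$. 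So: right approach, but the constant-chasing you postpone is not a technicality --- it forces either a reformulation of the statement or an explicit restriction on $\theta$, and the patches you sketch for the extreme regimes do not work as written.
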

\begin{lemma}\label{integralforsubharmonic}\cite[p.78]{redbook}
Let $M$ be a general $n$-dimensional complete Riemannian manifold. Suppose that $f : B((1+ \tau)R) \rightarrow [0,\infty)$ is a non-negative subharmonic function on $B((1+\tau)R)$ (i.e., $f\geq 0$ and $\Delta f \geq 0$ on $B((1+\tau)R)$). Then 
\begin{equation}
\int_{B(R)} |\nabla f|^{2} \leq \frac{C}{\tau^{2}R^{2}} \int_{B((1+\tau)R)} f^{2} ,
\end{equation}
where $C$ is some universal constant. 
\end{lemma}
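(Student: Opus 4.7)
The plan is to establish a standard Caccioppoli-type estimate for the non-negative subharmonic function $f$ by testing the subharmonicity condition $\Delta f \geq 0$ against a cutoff of the form $\eta^{2} f$, where $\eta$ is a Lipschitz function localizing to the larger ball. This is a classical reverse-Poincar\'e argument, and the only subtlety in the Riemannian setting is the construction of an appropriate cutoff since the distance function is only Lipschitz in general.

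First, I would construct the cutoff. Let $\phi : [0, \infty) \to [0,1]$ be a smooth function with $\phi \equiv 1$ on $[0,1]$, $\phi \equiv 0$ on $[1+\tau, \infty)$, and $|\phi'| \leq 2/\tau$. Set $\eta(x) = \phi(\rho(x)/R)$, where $\rho$ is the distance to the center of the ball. Since $\rho$ is $1$-Lipschitz (and smooth away from the cut locus with $|\nabla \rho| \leq 1$ by \eqref{edist1}), the function $\eta$ is Lipschitz with $|\nabla \eta| \leq 2/(\tau R)$ almost everywhere, it is supported in $B((1+\tau)R)$, and it is identically $1$ on $B(R)$.

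Next, since $\eta^{2} f \geq 0$, multiplying $\Delta f \geq 0$ by $\eta^{2} f$ and integrating over $B((1+\tau)R)$ (the boundary terms vanish because $\eta$ is supported in the interior) yields, via integration by parts,
\begin{equation*}
0 \leq \int_{M} \eta^{2} f \, \Delta f = - \int_{M} \overline{g}\bigl(\nabla(\eta^{2} f), \nabla f\bigr).
\end{equation*}
Expanding $\nabla(\eta^{2} f) = 2 \eta f \, \nabla \eta + \eta^{2} \, \nabla f$ and rearranging gives
\begin{equation*}
\int_{M} \eta^{2} |\nabla f|^{2} \leq -2 \int_{M} \eta f \, \overline{g}(\nabla \eta, \nabla f).
\end{equation*}

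Then I would apply the Cauchy--Schwarz inequality followed by Young's inequality $2|ab| \leq \frac{1}{2} a^{2} + 2 b^{2}$ to the right-hand side:
\begin{equation*}
2 \int_{M} |\eta \, \nabla f| \cdot |f \, \nabla \eta| \leq \frac{1}{2} \int_{M} \eta^{2} |\nabla f|^{2} + 2 \int_{M} f^{2} |\nabla \eta|^{2}.
\end{equation*}
Absorbing the first term on the left and using $|\nabla \eta| \leq 2/(\tau R)$ together with $\eta \equiv 1$ on $B(R)$, I obtain
\begin{equation*}
\int_{B(R)} |\nabla f|^{2} \leq \int_{M} \eta^{2} |\nabla f|^{2} \leq 4 \int_{M} f^{2} |\nabla \eta|^{2} \leq \frac{16}{\tau^{2} R^{2}} \int_{B((1+\tau)R)} f^{2},
\end{equation*}
which is the desired bound with $C = 16$.

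The only place where care is needed is the use of integration by parts with a cutoff built from the distance function, since $\rho$ may fail to be smooth on the cut locus. This issue is harmless, however: the cut locus has measure zero, $\eta$ is Lipschitz globally, and one can either approximate $\eta$ by smooth cutoffs (using standard mollification adapted to the manifold, or Greene--Wu smoothing of the distance function) or directly invoke the divergence theorem for Lipschitz vector fields. I expect this mild technical point to be the main potential obstacle, but it is routine and does not affect the final constant.
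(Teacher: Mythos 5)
Your proof is correct and is exactly the standard Caccioppoli (reverse Poincar\'e) argument: test $\Delta f \geq 0$ against $\eta^{2}f$ with a Lipschitz cutoff built from the distance function, integrate by parts, and absorb via Young's inequality. The paper itself does not prove this lemma but cites it from \cite[p.78]{redbook}, and the proof there is this same argument, so there is nothing to reconcile; your handling of the cut locus issue (the cutoff is globally Lipschitz, which suffices for the integration by parts) is the right observation, and in the paper's application $f$ is smooth so even that concern is moot.
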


\begin{thm}[Gradient Estimate \cite{Yau75, redbook}]\label{thmgradient}
Let M be an n-dimensional complete Riemannian manifold with $\Ric(M)\geq-(n-1)K$, for some constant $K\geq 0$.  If $u$ is a positive harmonic function on $M$ and $B_{r}(x)$ is a geodesic ball in $M$, then
\begin{align}
\abs{\label{gradest} \nabla u} \leq C_{n}\left(\tfrac{1+r\sqrt{K}}{r}\right) u \quad\mbox{on} \ B_{\tfrac r2}(x),
\end{align}
where $C_{n}$ is a constant depending only on $n$.
\end{thm}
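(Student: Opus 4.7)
The plan is to follow the classical Cheng--Yau maximum principle argument. First, since $u>0$ is harmonic, set $v=\log u$; a direct computation using $\Delta u = 0$ shows that $v$ satisfies the pointwise identity $\Delta v + |\nabla v|^{2}=0$. The goal is then to bound $|\nabla v|^{2}$ on $B_{r/2}(x)$, because $|\nabla u|=u|\nabla v|$ will give the theorem.

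The second step is to apply the Bochner--Weitzenb\"ock formula to $v$:
\begin{equation*}
\tfrac{1}{2}\Delta|\nabla v|^{2} \;=\; |\mathrm{Hess}\, v|^{2} \;+\; \langle \nabla v,\nabla \Delta v\rangle \;+\; \mathrm{Ric}(\nabla v,\nabla v).
\end{equation*}
Using $\Delta v=-|\nabla v|^{2}$, we have $\nabla \Delta v = -\nabla|\nabla v|^{2}$; combining this with the Newton trace inequality $|\mathrm{Hess}\, v|^{2}\geq (\Delta v)^{2}/n = |\nabla v|^{4}/n$ and the hypothesis $\mathrm{Ric}(M)\geq -(n-1)K$, the function $f=|\nabla v|^{2}$ satisfies
\begin{equation*}
\tfrac{1}{2}\Delta f \;\geq\; \tfrac{1}{n}f^{2} \;-\; \langle\nabla v,\nabla f\rangle \;-\; (n-1)K f.
\end{equation*}

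Third, introduce a cutoff $\eta$ on $B_{r}(x)$ of the form $\eta=\varphi(\rho(y)/r)$, where $\varphi$ is a fixed smooth function with $\varphi\equiv 1$ on $[0,1/2]$ and $\varphi\equiv 0$ near $1$, and $\rho$ is the distance to $x$. The Laplacian comparison bound $\Delta\rho\leq (n-1)\tfrac{1+r\sqrt{K}}{r}$ (the companion of \eqref{edist3}, stated in the excerpt for the upper curvature bound) translates into the estimates $|\nabla \eta|^{2}/\eta \leq C_{n}/r^{2}$ and $\Delta \eta \geq -C_{n}(1+r\sqrt{K})/r^{2}$. Apply the maximum principle to $G=\eta^{2}f$ at an interior maximum point $p$: at $p$ one has $\nabla G=0$ (so $\eta\nabla f = -2f\nabla\eta$) and $\Delta G\leq 0$. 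Expanding $\Delta(\eta^{2}f)$ and substituting the Bochner inequality yields an inequality of the schematic form
\begin{equation*}
\tfrac{2}{n}\eta^{2}f^{2}(p) \;\leq\; A\,\eta^{2}f^{3/2}(p)\cdot \tfrac{|\nabla\eta|}{\eta}(p) \;+\; B\,\eta^{2}f(p)\cdot\Big(\tfrac{|\nabla\eta|^{2}}{\eta^{2}} + \tfrac{|\Delta\eta|}{\eta} + K\Big)(p).
\end{equation*}
Solving this quadratic inequality in $\eta^{2}f(p)$ and feeding in the cutoff bounds gives $\eta^{2}f\leq C_{n}(1+r\sqrt{K})^{2}/r^{2}$ at $p$, hence everywhere, and in particular on $B_{r/2}(x)$ where $\eta\equiv 1$. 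Returning to $u$ via $|\nabla u|=u\sqrt{f}$ yields the stated estimate.

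The main technical obstacle is that the distance function $\rho(\cdot)=\rho_{x}(\cdot)$ is only Lipschitz (not smooth) on the cut locus of $x$, so the naive choice of $\eta$ above need not be $C^{2}$ and the maximum principle cannot be applied directly. This is handled by Calabi's trick: replace $\rho$ by a smooth function that agrees with $\rho$ along the minimizing geodesic from $x$ to $p$ and dominates it elsewhere, so that the required differential inequalities at $p$ still hold in the barrier sense. The rest of the argument is computational and depends only on dimension $n$ and the Ricci lower bound $K$, giving the claimed constant $C_{n}$.
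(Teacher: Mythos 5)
Your argument is the classical Cheng--Yau maximum-principle proof (logarithmic substitution, Bochner formula, cutoff, Calabi's trick at the cut locus), which is exactly the proof given in the cited sources \cite{Yau75, redbook}; the paper itself states this theorem as a known tool and does not reprove it. Your sketch is correct and matches that standard approach, including the correct handling of the $f^{3/2}$ cross term at the maximum point and the need for Calabi's trick where the distance function fails to be smooth.
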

\begin{lemma}\cite[p. $15$]{redbook}
Let $M$ be a Riemannian manifold.  Then the following holds in normal coordinates at $x$
\begin{equation}\label{important}
\Delta [ |\nabla F|^{2}](x) = 2 \sum [\partial_{i}\partial_{j}F]^{2}(x) + 2\sum \partial_{i}F(x) \partial_{i} (\Delta F)(x) + 2\Ric (\nabla F , \nabla F)(x).
\end{equation}
\end{lemma}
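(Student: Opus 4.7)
This is the classical Bochner pointwise identity for a scalar function, and I would prove it by a direct computation using the geodesic normal coordinates chosen about $x$. The key technical inputs are that at $x$ the Christoffel symbols vanish (by \eqref{gamma0}), together with the Ricci commutation identity applied to the exact one-form $dF$.

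First I would establish the intrinsic (coordinate-free) identity
\[
\tfrac{1}{2}\Delta\bigl(|\nabla F|^{2}\bigr) = \overline{g}\bigl(\overline{\nabla}(\nabla F),\overline{\nabla}(\nabla F)\bigr) + g(\nabla F,\nabla\Delta F) + \Ric(\nabla F,\nabla F).
\]
The derivation goes as follows: apply the Leibniz rule to $|\nabla F|^{2}=g(\nabla F,\nabla F)$ to obtain $\overline{\nabla}|\nabla F|^{2} = 2\,\overline{g}(\overline{\nabla}(\nabla F), dF)$, then take a second covariant derivative and trace with respect to $g$. The first trace produces the squared-Hessian term $\overline{g}\bigl(\overline{\nabla}(\nabla F),\overline{\nabla}(\nabla F)\bigr)$; the remaining term is of the form $\bigl(\operatorname{tr}_{g}\overline{\nabla}^{2}(dF)\bigr)(\nabla F)$. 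To rewrite this as $g(\nabla F,\nabla\Delta F) + \Ric(\nabla F,\nabla F)$ one commutes a pair of covariant derivatives on the one-form $dF$; this is precisely the Ricci commutation identity, and it is what produces the curvature contribution.

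Once the intrinsic formula is in place, the coordinate version \eqref{important} is immediate. At the basepoint $x$ the Christoffel symbols vanish by \eqref{gamma0}, so the covariant Hessian coincides with the ordinary Hessian there, giving $\overline{g}\bigl(\overline{\nabla}(\nabla F),\overline{\nabla}(\nabla F)\bigr)(x)=\sum_{i,j}(\partial_{i}\partial_{j}F)^{2}(x)$; likewise $g(\nabla F,\nabla\Delta F)(x)=\sum_{i}\partial_{i}F(x)\,\partial_{i}(\Delta F)(x)$, while the Ricci term is unchanged under the identification \eqref{lower}. Multiplying through by $2$ yields the stated formula \eqref{important}.

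The main obstacle is keeping sign conventions straight. The paper has fixed $(dd^{*}+d^{*}d)=-\Delta$ in \eqref{canconicalLaplacian}, and has normalized the Ricci operator so that $\Ric(\omega)=-(n-1)a^{2}\omega$ on $\mathbb{H}^{n}(-a^{2})$ by \eqref{ricci2}. The Ricci/Weitzenböck commutation identity for $dF$ must be written with precisely these sign choices so that the curvature contribution appears with a plus sign on the right of the Bochner formula, as recorded in \eqref{important}. Once this bookkeeping is done the remaining calculation is mechanical: the only genuinely geometric input is the commutator of two covariant derivatives acting on the one-form $dF$, and everything else is product rule and the vanishing of the Christoffel symbols at $x$.
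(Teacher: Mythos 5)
Your derivation is correct and is the standard Bochner-formula argument; note that the paper itself offers no proof of this lemma but imports it verbatim from \cite[p.~15]{redbook}, where essentially the same computation (Leibniz rule, symmetry of the Hessian, Ricci commutation identity on $dF$, all evaluated in normal coordinates at $x$ where the Christoffel symbols vanish) is carried out. The one point to tidy is the intermediate expression $\overline{\nabla}|\nabla F|^{2} = 2\,\overline{g}\bigl(\overline{\nabla}(\nabla F), dF\bigr)$: as written it does not typecheck against the paper's $\overline{g}$ on $T^{*}M\otimes T^{*}M$, and should be read as the contraction of the covariant Hessian with $dF$ in one slot, yielding a one-form.
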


\subsection{Comparison theorem for Jacobi fields and the growth rate of the volume form on negatively curved Riemannian manifold}\label{Jacobidisscussion}
In this subsection, we only focus on a complete, simply connected, $2$-dimensional Riemannian manifold $M$ with sectional curvature satisfying $-b^{2} \leq K_{M} \leq -a^{2} < 0$  
To begin, let $O$ be a selected point in $M$, and let $exp_{O} : T_{O}M \rightarrow M $ be the \emph{global} exponential map at $O$, whose existence is ensured as before by the Cartan-Hadamard theorem. We remark that the tangent space $T_{O}M  $ is identified with the Euclidean space $\mathbb{R}^{2}$. \\

Let $(\overline{r} , \overline{\theta})$ be the \emph{ordinary polar coordinates on $\mathbb{R}^{2}$ in the Euclidean sense}.  That is, the respective induced vectors 
$\frac{\partial}{\partial \overline{r}}$ and $\frac{\partial}{\partial \overline{\theta }}$ are given by 
\begin{equation}
\begin{split}
\frac{\partial}{\partial \overline{r}}\bigg |_{(x,y)} & = \frac{(x,y)}{(x^{2} + y^{2})^{\frac{1}{2}}},\\
\frac{\partial}{\partial \overline{\theta }}\bigg |_{(x,y)} & =  (-y,x).
\end{split}
\end{equation}
Then, the \emph{geodesic normal polar coordinates} $(r, \theta )$ \emph{ on M} (as induced by  $exp_{O} : T_{O}M \rightarrow M $) is given by the composite function $(r,\theta ) = (\overline{r} , \overline{\theta}) \circ \{exp_{O}\}^{-1} $. \\

Let $v \in T_{O}M = \mathbb{R}^{2}$ be any unit vector, and consider the geodesic $c : [0, \infty ) \rightarrow M$ with $c(0) = O$ and $c'(0) = v$. Then, we notice that 
$\frac{\partial}{\partial r}_{c(r)} = c'(r)$. Next, in order to compute $\frac{\partial}{\partial \theta }_{c(r)} $, we first observe that \emph{the ordinary Euclidean}  $\frac{\partial}{\partial \overline{\theta} }_{rv} $ is given by 

\begin{itemize}
\item  $\frac{\partial}{\partial \overline{\theta} }_{rv} =  r w$, in which $w$ is the unique \emph{unit} vector in $\mathbb{R}^{2}$ such that the pair $\{v, w\}$ forms a positively oriented orthonormal basis for $\mathbb{R}^{2}$. (Recall that we have the identification $ T_{O}M = \mathbb{R}^{2} $, so, we may just regard $v \in T_{O}M$ to be a vector in $\mathbb{R}^{2}$.) 
\end{itemize}

Now, let $Y(r)$ be the Jacobi field along the geodesic $c : [0, \infty ) \rightarrow M$ with $Y(0) = 0$, and $\nabla_{\frac{\partial }{\partial r}}Y_{(0)} =  w$. (Recall that $Y(r)$ is a Jacobi field means that the equation $\overline{\nabla}_{c'}\overline{\nabla}_{c'}Y + R(Y,c')c' = 0$ holds along the geodesic ray $c(t)$.) Then, it is well known that \cite{Jost}
\begin{equation}
Y(r) = (D exp_{O})_{rv} ( r w) =  (D exp_{O})_{rv} (\frac{\partial}{\partial \overline{\theta }}_{rv}      ) ,
\end{equation}
which implies that $ \frac{\partial}{\partial \overline{\theta }}_{rv}  = (D exp_{O}^{-1})_{c(r)} (Y(r))  $. Hence, we have for any $f \in C^{\infty} (M)$
\begin{equation}
\begin{split}
\frac{\partial}{\partial \theta }_{c(r)} f & =  \frac{\partial}{\partial \overline{\theta }}_{rv} (f \circ exp_{O}) \\
& = [(D exp_{O}^{-1})_{c(r)} (Y(r))] (f \circ exp_{O} ) = d (f \circ exp_{O})_{rv} \left( (D exp_{O}^{-1})_{c(r)} (Y(r)) \right) \\
& = (df)_{c(r)} (Y(r)) = Y(r) (f) .
\end{split}
\end{equation}
This shows that we have the important identity $$ \frac{\partial}{\partial \theta }_{c(r)} = Y(r).  $$ 

%Now, let $e_{2}$ be the parallel vector field along the geodesic $c : [0, \infty ) \rightarrow M$ with $e_{2}(0) = w$. Then, the Jacobi field $Y(r)$ can be described as $Y(r) =  sinh(r) e_{2}(r)$. (We remark that in many case, we will omit the variable $r$ in the term $e_{2}(r)$, and simply write $e_{2}$.) So, we just have

%\begin{equation}
%\frac{\partial}{\partial \theta }_{c(r)} =  sinh (r) e_{2}(r). 
%\end{equation}

With the above preparation, we can now discuss the growth rate of the volume form on a complete, simply connected, $2$-dim Riemannian manifold with $-b^{2} \leq K_{M} \leq -a^{2} < 0$. Under the geodesic normal coordinates $(r,\theta )$, the volume form is given by $$\abs{\frac{\partial}{\partial\theta}} dr d\theta,$$  but sometimes we write $\abs{\frac{\partial}{\partial\theta}} d\rho d\theta$ in the case when the distance function $\rho$ from $O$ is used to replace the symbol $r$.
Then, the following comparison theorem in differential geometry is used to give us the growth rate of the volume form.
\begin{thm}\label{JacobifieldThm}
[Comparison theorem for Jacobi fields \cite{Jost}] Let $M$ be a simply connected, complete, $n$-dim Riemannian manifold $M$ with $-b^{2} \leq K_{M} \leq -a^{2} < 0$. Let $c : [0, \infty ) \rightarrow M$ be a geodesic ray in $M$, and we consider a given Jacobi field $Y(r)$ along the geodesic ray $c$ \emph{which is orthogonal to $c$ and satisfies $|Y(0)| =1$}. Then, it follows that we have the following estimate for the growth of $|Y(r)|$ along $c$, for all $r\geq 0$.

\begin{equation}
\frac{1}{a} \sinh(a r) \leq |Y(r)| \leq \frac{1}{b} \sinh(br).
\end{equation}

\end{thm}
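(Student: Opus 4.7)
The plan is to reduce the estimate to the classical Rauch comparison theorem by studying the scalar function $f(r) = |Y(r)|$. I read the initial condition as $Y(0) = 0$ and $|\overline{\nabla}_{c'} Y(0)| = 1$ (the normalization used earlier in Subsection \ref{Jacobidisscussion} via the unit vector $w$), since the stated right-hand sides vanish at $r = 0$ and the Jacobi-field incarnation of $D\exp_{O}$ carries this normalization.

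First I would differentiate $f^2 = \langle Y, Y\rangle$ twice. Using the Jacobi equation $\overline{\nabla}_{c'}\overline{\nabla}_{c'} Y = -R(Y,c')c'$, the orthogonality $Y \perp c'$, and $|c'| \equiv 1$, one obtains
$$f''(r) \;=\; -K_{M}(Y,c')(r)\,f(r) \;+\; \frac{|\overline{\nabla}_{c'} Y|^{2}|Y|^{2} - \langle \overline{\nabla}_{c'} Y, Y\rangle^{2}}{|Y|^{3}} \;\geq\; -K_{M}(Y,c')(r)\,f(r),$$
where the inequality is Cauchy--Schwarz. Hence $K_{M} \leq -a^{2}$ yields the Sturm-type inequality $f'' \geq a^{2} f$. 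Comparing with $\phi(r) = \tfrac{1}{a}\sinh(ar)$, which satisfies $\phi'' = a^{2}\phi$, $\phi(0) = 0$, $\phi'(0) = 1$, the Wronskian $W := f'\phi - f\phi'$ has $W(0) = 0$ and $W' = (f'' - a^{2} f)\phi \geq 0$, so $(f/\phi)' \geq 0$ on $(0,\infty)$; l'H\^opital at $0^{+}$ gives $f/\phi \to 1$, and therefore $f(r) \geq \tfrac{1}{a}\sinh(ar)$.

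The upper bound is the main obstacle: the Cauchy--Schwarz correction in the identity above has the wrong sign to produce $f'' \leq b^{2} f$ directly, so a scalar ODE comparison on $f$ does not suffice. To circumvent this, I would invoke the index-form version of Rauch's theorem. Introduce
$$I_{[0,r]}(V,V) \;=\; \int_{0}^{r} \bigl(|\overline{\nabla}_{c'} V|^{2} - \langle R(V,c')c', V\rangle \bigr)\,ds$$
on vector fields $V$ orthogonal to $c'$. Because Cartan--Hadamard guarantees no conjugate points on $M$, the Jacobi field $Y$ minimizes $I_{[0,r]}$ among orthogonal fields with prescribed endpoint data. Transporting a model minimizer $V_{b}$ along a geodesic of $\mathbb{H}^{n}(-b^{2})$ back to $c$ by a parallel orthonormal frame, the hypothesis $K_{M} \geq -b^{2}$ yields $\langle R(\tilde V_{b},c')c', \tilde V_{b}\rangle \geq -b^{2}|\tilde V_{b}|^{2}$, whence $I_{M}(\tilde V_{b}, \tilde V_{b}) \leq I_{b}(V_{b}, V_{b})$; the standard index-comparison argument then converts this into $|Y(r)| \leq \tfrac{1}{b}\sinh(br)$.

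In summary, the lower bound is a clean Sturm--Liouville argument on the scalar $f = |Y|$, while the upper bound genuinely requires the variational (or equivalently Riccati) Rauch machinery. In both cases the absence of conjugate points on the simply connected, negatively curved $M$ lets the comparison run uniformly for all $r \geq 0$, producing the sinh-sandwich on the entire geodesic ray.
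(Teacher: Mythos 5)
The paper does not actually prove this statement: it is imported from Jost as the classical Rauch/Jacobi--field comparison theorem, so there is no in-paper argument to measure yours against. Your proof is correct and is essentially the standard textbook argument. Two points are worth recording. First, you are right to re-read the initial condition: as printed, $|Y(0)|=1$ is incompatible with the conclusion (both bounding functions vanish at $r=0$), and the normalization actually used in the surrounding discussion of $\frac{\partial}{\partial\theta}_{c(r)}=Y(r)$ is $Y(0)=0$, $|\overline{\nabla}_{c'}Y(0)|=1$; your proof proves the statement the paper needs. Second, your split of the two inequalities is the right one: the Sturm/Wronskian argument on $f=|Y|$ (with $f''\geq -K_{M}f\geq a^{2}f$, smoothness of $f$ on $(0,\infty)$ guaranteed by the absence of conjugate points, and $f/\phi\to 1$ at $0^{+}$) only yields the lower bound, because the Cauchy--Schwarz correction in $f''$ has a fixed sign; the upper bound genuinely requires the index-form (equivalently Riccati) version of Rauch, with the index lemma justified on a Cartan--Hadamard manifold. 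The only step you leave implicit is the conversion of $I_{M}(Y,Y)\leq I_{b}(V_{b},V_{b})$ into the monotonicity of $|Y(r)|^{2}\big/\bigl(\tfrac{1}{b}\sinh(br)\bigr)^{2}$ via the identity $\tfrac{d}{dr}\log|Y|^{2}=2I_{[0,r]}(Y,Y)/|Y(r)|^{2}$ together with the normalization at the endpoint; that is standard and acceptable to cite, which is in any case more than the paper itself supplies.
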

Now, by the above comparison theorem together with the identity $ \frac{\partial}{\partial \theta }_{c(r)} = Y(r)  $, we immediately deduce that, for a complete, simply connected Riemannian manifold $M$ of dimension $2$ with $-b^{2} \leq K_{M} \leq -a^{2} < 0$, the weight $G(r,\theta) = \abs{\frac{\partial}{\partial\theta}} $ of the volume form  $\abs{\frac{\partial}{\partial\theta}} dr d\theta$ on $M$ is \emph{at most of the order $\frac{1}{b} \sinh(br)$}. This is used in Section 4 to estimate the integral of a certain non-negative function on $M$. Finally, we also remark that, in the special case of a space form $\mathbb{H}^{2}(-a^{2})$ with constant sectional curvature $-a^{2}$, we have \emph{exactly} $\abs{\frac{\partial}{\partial\theta}} = |Y(r)| = \frac{1}{a} \sinh(a r) $.

\subsection{Geodesic balls, cones and the geometric boundary $S(\infty)$}\label{balls}
In this subsection, we will consider \emph{only} simply connected, completed, $n$-dimensional Riemannian manifold $M$ with sectional curvature satisfying $-b^{2} \leq K_{M} \leq -a^{2} > 0$. Recall that the classical Cartan-Hadamard theorem ensures that the geodesic normal coordinate at any point $O$ in such a negatively curved manifold $M$ is globally defined, and hence $M$ is diffeomorphic to $\mathbb{R}^{n}$. 
\newline\indent
Using the distance function from the Section \ref{moreprelim}, define
a geodesic ball in $M$ with radius $R$ and centered at $x$ by
$$B_{R}(x)=\{ y \in M : \rho_{x}(y)\leq R \}.$$
Next, let $O\in M$ and $v \in T_{O}M$.  Define the cone about $v$ with angle $\theta$ by
 $$C_{O}(v,\theta)=\{y\in M: \angle(v,\overline{Oy})\leq \theta\}.$$
Finally the geometric boundary, the sphere at infinity $S(\infty)$ is
$$S(\infty)=\ \mbox{the set of all geodesic rays from}\  O,$$
which can be canonically identified with the unit sphere in $T_{O}M$:
$S_{O}(1)=\{ v \in T_{O}M : \abs{v}= 1 \}$,
so that every unit vector $v\in T_{O}M$ can then be regarded as a point in $S(\infty)$, and we can simply write $v \in S(\infty)$ (See \cite[p.36]{redbook}, \cite{Anderson, AndersonSchoen}).

\subsection{Bounded harmonic functions on negatively curved manifolds}\label{harmonic}
Anderson \cite{Anderson} and Sullivan \cite{Sullivan} independently, and using different methods, proved the following theorem.
\begin{thm}\cite{Anderson, Sullivan} \label{thmharmonic}
Let $M$ be a simply connected, $n$-dimensional, complete Riemannian manifold with sectional curvature $K_{M}$ satisfying $-b^{2}\leq K_{M}\leq -a^{2}<0$.  Then there exists a unique solution $u\in C^{\infty}(M)\cap C^{0}(\bar M)$ to the Dirichlet problem
\begin{align*}
&\Delta u=0 \ \ \mbox{in}\ \ M,\\
&u\big |_{S(\infty)}=\phi \in  C^{0}(S(\infty)).
\end{align*}
\end{thm}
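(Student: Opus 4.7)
The plan is to adapt the classical Perron method to the Cartan-Hadamard setting using exponentially decaying barriers near the sphere at infinity $S(\infty)$. Since $M$ is diffeomorphic to $\mathbb{R}^{n}$ via the exponential map at some base point $O \in M$, each point $x \neq O$ has unique polar coordinates $(\rho(x),\theta(x)) \in (0,\infty) \times S(\infty)$. The approach has three ingredients: (i) extend $\phi$ to a smooth function $\overline{\phi}$ on $M$ whose first and second derivatives decay exponentially as $\rho(x) \to \infty$; (ii) build barriers $\overline{\phi} \pm \alpha e^{-\delta \rho}$ that are sub- and super-harmonic outside a large compact set; (iii) apply Perron's method with these barriers to obtain existence, and then run the same barrier argument on the difference of two solutions to obtain uniqueness.

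First I would construct $\overline{\phi}$. Begin with the radial extension $\widetilde{\phi}(x) := \phi(\theta(x))$ on $M\setminus\{O\}$, then mollify against a fixed smooth kernel of unit geodesic radius in the metric of $M$. By Lemma \ref{angleexpdecayversiontwo}, any two points $y_{1},y_{2} \in M$ at the common distance $\rho$ from $O$ with $|\overline{y_{1}y_{2}}| \leq 3$ satisfy $\angle(\overline{Oy_{1}},\overline{Oy_{2}}) \leq C e^{-a\rho}$ once $\rho$ exceeds a universal constant $R_{0}$; comparing points on nearby geodesic spheres then gives $\osc_{B_{x}(2)} \widetilde{\phi} \leq C \omega_{\phi}(e^{-a\rho(x)})$, where $\omega_{\phi}$ is the modulus of continuity of $\phi$ on $S(\infty)$. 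Standard mollifier estimates then yield
\begin{equation*}
|\overline{\phi} - \widetilde{\phi}|(x) + |\nabla \overline{\phi}|(x) + |\Delta \overline{\phi}|(x) \leq C\, \omega_{\phi}\bigl(e^{-a\rho(x)}\bigr) = o\bigl(e^{-\delta \rho(x)}\bigr) \quad \text{for every } \delta < a.
\end{equation*}

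Next I would verify the barriers. A direct computation using $|\nabla \rho| = 1$ gives
\begin{equation*}
\Delta\bigl(\overline{\phi} - \alpha e^{-\delta \rho}\bigr) = \Delta \overline{\phi} + \alpha\bigl(\delta \Delta \rho - \delta^{2}\bigr) e^{-\delta \rho}.
\end{equation*}
By the Laplace comparison theorem \eqref{edist3}, $\Delta \rho \geq (n-1)a$, so the second term is at least $\alpha\,\delta\bigl((n-1)a - \delta\bigr) e^{-\delta \rho}$, which is positive for $0 < \delta < (n-1)a$. Picking $\delta \in (0,a)$ ensures this quantity dominates $|\Delta \overline{\phi}| = o(e^{-\delta \rho})$ outside some compact set $\{\rho \leq R_{1}\}$, provided $\alpha = \alpha(\phi,a,n,\delta)$ is chosen large; hence $\overline{\phi} - \alpha e^{-\delta \rho}$ is subharmonic on $\{\rho > R_{1}\}$, and symmetrically $\overline{\phi} + \alpha e^{-\delta \rho}$ is superharmonic there. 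After enlarging $\alpha$ to absorb any residual sign on the compact set $\{\rho \leq R_{1}\}$, both barriers are globally sub- resp.\ superharmonic, and both approach $\phi(\xi)$ uniformly as $x \to \xi \in S(\infty)$.

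Finally I would apply Perron's method. Let $\mathcal{S}$ be the family of continuous subharmonic functions $v$ on $M$ with $v \leq \overline{\phi} + \alpha e^{-\delta \rho}$ and set $u(x) = \sup\{v(x) : v \in \mathcal{S}\}$. The standard toolkit---Dirichlet solvability on small geodesic balls, Harnack's inequality, and local harmonic replacement---yields $u \in C^{\infty}(M)$ harmonic, sandwiched between $\overline{\phi} \pm \alpha e^{-\delta \rho}$, and the squeeze forces $u(x) \to \phi(\xi)$ as $x \to \xi$. For uniqueness, the difference $w = u_{1}-u_{2}$ of any two solutions is bounded, harmonic, and has zero boundary values on $S(\infty)$; reapplying the barrier construction to $\phi \equiv 0$ (so $\overline{\phi} \equiv 0$) forces $|w| \leq \alpha e^{-\delta \rho}$ for every $\alpha > 0$, hence $w \equiv 0$. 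The main obstacle is step (i): calibrating the mollification so that $|\Delta \overline{\phi}|$ genuinely decays at rate $e^{-a\rho}$, which is what opens the narrow window $0 < \delta < a$ in which a single $\delta$ makes both barriers work. This is precisely where the curvature upper bound $K_{M} \leq -a^{2} < 0$ is decisive, entering both through the Laplace comparison \eqref{edist3} and, via Lemma \ref{angleexpdecayversiontwo}, through the oscillation of $\widetilde{\phi}$; the lower bound $K_{M} \geq -b^{2}$ is used to control constants in the mollifier estimate through bounded geometry.
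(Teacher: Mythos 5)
Your overall strategy---radial extension of $\phi$, mollification, the oscillation estimate from Lemma \ref{angleexpdecayversiontwo}, barriers $\overline{\phi}\pm\alpha e^{-\delta\rho}$ with $\delta<a$ via the Laplace comparison \eqref{edist3}, and Perron's method---is exactly the route the paper outlines in Section \ref{harmonic} (Steps 1--3 and Section \ref{constants}), following Schoen--Yau and Anderson--Schoen. However, there is one genuine error in your step (i), and it is precisely at the point you yourself single out as "the main obstacle." You claim that $\osc_{B_{x}(2)}\widetilde{\phi}\leq C\,\omega_{\phi}(e^{-a\rho(x)})$ implies $|\Delta\overline{\phi}|(x)=o(e^{-\delta\rho(x)})$ for \emph{every} $\delta<a$. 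This is false for general $\phi\in C^{0}(S(\infty))$: the modulus of continuity of a merely continuous function can tend to zero arbitrarily slowly, e.g.\ $\omega_{\phi}(t)\sim 1/\log(1/t)$ gives $\omega_{\phi}(e^{-a\rho})\sim 1/(a\rho)$, which decays only polynomially. In that case $\alpha\delta[(n-1)a-\delta]e^{-\delta\rho}$ can never dominate $|\Delta\overline{\phi}|$ near infinity for any fixed $\delta>0$, and your barriers fail to be sub-/superharmonic outside any compact set. The exponential decay of the oscillation, and hence of $|\Delta\overline{\phi}|$, genuinely requires $\phi$ to be Lipschitz or $C^{1}$ --- which is why the paper's Steps 1 and 2 (see \eqref{step1} and \eqref{step2}) carry the factor $\|\phi'\|_{L^{\infty}(S(\infty))}$ and why Section \ref{constants} and Proposition \ref{exponentialdecayofgradientF} work with $C^{1}$ boundary data.

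The gap is fixable, and the fix is the standard one: first run your argument verbatim for $\phi\in C^{1}(S(\infty))$ (where everything you wrote is correct), then for $\phi\in C^{0}(S(\infty))$ choose $\phi_{k}\in C^{1}(S(\infty))$ with $\phi_{k}\to\phi$ uniformly, let $F_{k}$ be the corresponding harmonic extensions, and observe that $F_{k}-F_{j}$ is harmonic, continuous up to $\overline{M}$, and equal to $\phi_{k}-\phi_{j}$ on $S(\infty)$; the maximum principle (applied on the compact sets $\{|F_{k}-F_{j}|\geq\|\phi_{k}-\phi_{j}\|_{\infty}+\eps\}$) gives $\|F_{k}-F_{j}\|_{L^{\infty}(M)}\leq\|\phi_{k}-\phi_{j}\|_{L^{\infty}(S(\infty))}$, so $F_{k}$ converges uniformly to a harmonic function attaining the boundary values $\phi$. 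Two smaller remarks: your uniqueness step does not actually need the barriers at all --- a bounded harmonic $w\in C^{0}(\overline{M})$ with $w|_{S(\infty)}=0$ vanishes by the maximum principle alone, since $\{|w|\geq\eps\}$ is compact for every $\eps>0$; and your final sentence attributes the mollifier estimates to the lower curvature bound $K_{M}\geq-b^{2}$, which is consistent with the paper's use of Lemma \ref{angleexpdecayversiontwo} (whose constants depend on both $a$ and $b$), so that part is fine.
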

A simpler proof is also presented in the comprehensive work of Anderson and Schoen  \cite{AndersonSchoen}, and it is also exposed in \cite{redbook}.
The main idea there is to construct two barrier functions and use the Perron's method\footnote{We note that the proof in  \cite{Anderson} also relies on the Perron's method.}.   This in turn is accomplished in three steps, whose conclusions we use in the proof of Proposition \ref{exponentialdecayofgradientF}), which is a crucial tool for our result.  Therefore, we give a brief outline of the proof in \cite[p. 37]{redbook}, and list the needed conclusions:
\begin{itemize}
\item [Step 1)]  Extend the function $\phi$ to all of $M$ and show
\be\label{step1}
\sup_{y\in B_{x}(1)}\abs{\phi(y)-\phi(x)}\leq C_{0}\norm{\phi'}_{L^\infty(S(\infty))}e^{-a\rho(x)}
\ee
To extend $\phi$ to all of $M$ we pick a base point $O \in M$ and use the the geodesic normal polar coordinates $(r,\theta )$ at $O$ to define
\[
\phi (r , \theta ) = \phi (\theta ),\ \mbox{for all}\ r > 0.
\]
Lemma \ref{angleexpdecayversiontwo} is then used to show \eqref{step1}. 
\item [Step 2)] The Laplacian of the average of $\phi$ has an exponential decay.  More precisely, let
\[
\bar \phi (x)=\frac{\int \chi(\rho^{2}_{x}(y))\phi(y)dy}{\int \chi(\rho^{2}_{x}(y))dy},
\]
where $\chi$ is a standard cut off function.  Then it can be showed
\be\label{step2}
\abs{\Delta \bar \phi(x)}\leq C_{0}\norm{\phi'}_{L^\infty(S(\infty))}e^{-a\rho(x)},
\ee
where $\rho(x)$ is the distance function defined in Section \ref{moreprelim}.
\item[Step 3)]  Show there exists $\alpha>0$ and $\delta$ small enough such that  
\begin{align}\label{barrier}
\Delta [\overline{\phi} - \alpha e^{-\delta \rho} ] \geq 0 \quad\mbox{and}\quad \Delta [\overline{\phi} + \alpha e^{-\delta \rho} ] \leq 0.
\end{align}
Then by Perron's\footnote{See \cite{Anderson} for the application in this context or \cite{GilbargTrudinger}.} method there exists a harmonic function $F$ such that 
 \begin{equation}\label{step3}
\overline{\phi} - \alpha e^{-\delta \rho} \leq F \leq \overline{\phi} + \alpha e^{-\delta \rho}.
\end{equation}
The boundary conditions are easily checked.  
\end{itemize}
\subsubsection{Constants $\alpha$ and $\delta$.}\label{constants}
Constants $\alpha$ and $\delta$ from Step 3 play a very important role in our proofs.  Therefore we take some time now to discuss $\alpha$ and $\delta$ and and how they relate to the function $\phi$ and the curvature $-b^{2}\leq K_{M}\leq -a^{2}$.  We emphasize that this exposition is completely based on \cite[p. 40]{redbook} although the details of \eqref{barrier1} and \eqref{barrier2} below were not exposed there. 
\newline\indent
First start with some $\delta>0$ to be specified later.  Using \eqref{edist1} we then observe
\begin{equation}\label{step3a}
\Delta (e^{-\delta \rho(x)}) = \delta e^{-\delta \rho(x)} (\delta - \Delta \rho(x)). 
\end{equation}
Also, by \eqref{edist3} 
 \[
 \Delta \rho \geq (n-1)a\cdot coth (a\rho) \geq (n-1)a
 \] 
Next, one has to choose sufficiently small $\delta$ and sufficiently large $\alpha$ so that \eqref{barrier} does indeed hold.  Let $\delta<a$, then the first equation \eqref{barrier} is obtained as follows.  By \eqref{step2} and \eqref{step3a}
\begin{equation}\label{barrier1}
\begin{split}
 \Delta [\overline{\phi} - \alpha e^{-\delta \rho} ] & = \Delta \overline{\phi}  - \alpha \delta e^{-\delta \rho} (\delta - \Delta \rho) \\
& \geq - C_{0} \|\phi '\|_{\infty} e^{- a\rho } + \alpha \delta [(n-1)a - \delta ] e^{- \delta \rho} \\
& \geq \{ \alpha \delta [(n-1)a - \delta ] - C_{0} \|\phi '\|_{\infty} \} e^{-\delta \rho},
\end{split}
\end{equation}
where $\delta < a$ is used to obtain the last line.  Similarly
\begin{equation}\label{barrier2}
\begin{split}
\Delta [\overline{\phi} + \alpha e^{-\delta \rho} ] & \leq  C_{0} \|\phi '\|_{\infty} e^{- a \rho } -  \alpha \delta [(n-1)a - \delta ] e^{- \delta \rho} \\
& \leq \{C_{0} \|\phi '\|_{\infty} - \alpha \delta [(n-1)a - \delta ]\} e^{- \delta \rho}.
\end{split}
\end{equation}
So, for any $ \delta < a$, we choose $\alpha = \frac{2 C_{0} \|\phi '\|_{\infty} }{ \delta [(n-1)a - \delta ]}$.  Note, in order to guarantee $\alpha>0$ for $n=2$, we need $\delta<a$ and not just $\delta \leq a$. Then \eqref{barrier} follows as needed. 
\newline\indent
In addition, besides $\delta$ not being too large, \emph{we} eventually need $\delta$ not to be too small.  More precisely, when we want to obtain that $\nabla F$ is in $L^{2}(\H)$, we impose additional condition  $\frac a2<\delta$.  Then by the discussion in the Section \ref{Jacobidisscussion} the exponential decay of $\nabla F $ obtained in Proposition \ref{exponentialdecayofgradientF} below will be sufficient to give $\|\nabla F\|_{L^{2}(\H)} < \infty$. 
\newline\indent
Similarly, when we want to obtain that $\nabla F$ is in $L^{2}(M),$ where $M$ is complete, simply connected $2$-dim manifold with sectional curvature satisfying $-b^{2}\leq K_{M}\leq -a^{2}$, where $a, b>0$ and $\frac b2<a$, then there we require $\delta > \frac b2$.
\subsection{Hodge Star operator and Hodge Laplacian}\label{hodge}
Let $d$ denote the exterior differentiation operator, which sends $k$ forms to $k+1$ forms.  As is well-known, $d$ satisfies
\be\label{dd}
dd\omega=0 \quad\mbox{for any}\ \ k-\mbox{form}\ \omega. 
\ee
Its dual operator, $d^\ast$, is given by
\be\label{dstar}
d^\ast=(-1)^k\ast\ast\ast d \ast,
\ee
where $\ast$ is the Hodge $\ast$ operator  and $k$ comes from $d^\ast$ acting on some given $k$-form (see for example \cite{Roe}).  We note $d^{\ast}$ sends $k$ forms to $k-1$ forms.  However,  the only main fact that we need to know in this paper, besides \eqref{dd} and \eqref{simple} below is  the definition of the Hodge Laplacian:
\[
-\Delta \omega=(dd^{\ast}+d^{\ast}d)\omega.
\]
When $\Delta$ acts on a function $F$, then the expression simplifies to
\be\label{DF}
-\Delta F= d^{\ast}dF.
\ee
So for example if we have a function $F$ that is harmonic, and if we define a $1$-form $U$ by
\[
U=dF,
\]
then it is very easy to see that $U$ is a harmonic $1$-form since
\begin{equation}\label{simple}
\begin{split}
(dd^{\ast}+d^{\ast}d)U &=dd^{\ast}dF+d^{\ast}ddF\\
				       &=dd^{\ast}dF\qquad\qquad\qquad \mbox{by}\ \eqref{dd}\\
				       &=d(d^{\ast}dF)\\
				       &=0,
\end{split}
\end{equation}
where in the last line we used the fact that $F$ is harmonic and \eqref{DF}.   The construction of our non-unique solution relies on this simple observation.

\section{Exponential decay of the gradient of a bounded harmonic functions on negatively curved manifold}
The main result of this section is the following proposition.
\begin{prop}\label{exponentialdecayofgradientF}
Let $M$ be an $n$-dimensional complete, simply connected Riemannian manifold with sectional curvature satisfying $-b^{2} \leq K_{M} \leq -a^{2}$. Let $\phi \in C^{1}(S(\infty))$ be any boundary data, and $F \in C^{\infty}(M) \cap C^{0}(\overline{M})$ be the unique bounded harmonic function on $M$ with $F|_{S(\infty)} = \phi$.  Let $\delta<a$. Then, the following inequality holds
\begin{equation}\label{edecay}
|\nabla F|(x)  \leq C_{0} \{1 +  \frac{1}{\delta [(n-1)a - \delta ] } \}  \|\phi '\|_{\infty}e^{-\delta\rho(x)} \ \ \forall x \in M,
\end{equation}
where $C_{0}$ depends only on $a$, $b$, and $n$.
\end{prop}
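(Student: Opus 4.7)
The plan is to combine two ingredients already assembled in the excerpt: the two-sided barrier estimate \eqref{step3} for $F$ from Step 3 of Section \ref{harmonic}, which provides oscillation control for $F$ on geodesic balls, and Yau's gradient estimate (Theorem \ref{thmgradient}), which converts such oscillation control into a pointwise bound on $|\nabla F|$.

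First I would prove an oscillation estimate of the form
\begin{equation*}
\osc_{B_{x}(2)} F \;\leq\; C(a,b,n)\Bigl[1 + \tfrac{1}{\delta[(n-1)a - \delta]}\Bigr]\|\phi'\|_{\infty}\,e^{-\delta\rho(x)}.
\end{equation*}
By \eqref{step3}, for every $y\in M$ we have $|F(y)-\overline{\phi}(y)|\leq \alpha\,e^{-\delta\rho(y)}$ with $\alpha = \tfrac{2C_{0}\|\phi'\|_{\infty}}{\delta[(n-1)a-\delta]}$, so for $y,z\in B_{x}(2)$
\begin{equation*}
|F(y)-F(z)| \;\leq\; |\overline{\phi}(y)-\overline{\phi}(z)| + 2\alpha\sup_{B_{x}(2)} e^{-\delta\rho}.
\end{equation*}
The second term is bounded by $2\alpha e^{2\delta}e^{-\delta\rho(x)}$, and since $\delta < a$ the factor $e^{2\delta}$ is absorbed into $C(a,b,n)$. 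For the first term I would use that the same reasoning that establishes \eqref{step1} (applied to $\overline{\phi}$ and on $B_{x}(2)$ instead of $B_{x}(1)$, adjusting the constant) gives an exponential decay of the form $\|\phi'\|_{\infty}\,e^{-a\rho(x)}$, which since $\delta < a$ is dominated by $\|\phi'\|_{\infty}e^{-\delta\rho(x)}$. Collecting both contributions yields the displayed oscillation bound.

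Second I would invoke the Yau gradient estimate. The function $u = F - \inf_{B_{x}(2)} F$ is non-negative and harmonic on $B_{x}(2)$, and the hypothesis $K_{M}\geq -b^{2}$ gives $\Ric(M) \geq -(n-1)b^{2}$, so Theorem \ref{thmgradient} applies with $r=2$ and $K = b^{2}$, yielding
\begin{equation*}
|\nabla F|(x) \;=\; |\nabla u|(x) \;\leq\; C_{n}\tfrac{1+2b}{2}\,u(x) \;\leq\; C_{n}(1+b)\osc_{B_{x}(2)} F.
\end{equation*}
Composing this with the oscillation estimate from the first step produces \eqref{edecay} with a constant that depends only on $a$, $b$, and $n$, as required.

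I do not expect a serious obstacle: the two tools plug together directly. The only piece that deserves care is keeping the dependence on $\delta$ honest — specifically, making sure the $\frac{1}{\delta[(n-1)a-\delta]}$ factor arising from the barrier coefficient $\alpha$ is preserved (and not worsened) when passing from an oscillation bound to a pointwise gradient bound via Yau's estimate. Since Yau's estimate contributes only a multiplicative constant $C_{n}(1+b)$ that is independent of $\delta$, this bookkeeping goes through cleanly, and the restriction $\delta < a$ (already needed in Section \ref{constants} to keep $\alpha$ finite and positive) is exactly the hypothesis under which the argument works.
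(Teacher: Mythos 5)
Your proposal is correct and follows essentially the same route as the paper's proof: bound $\osc F$ on a small geodesic ball via the Perron barriers \eqref{step3} together with the oscillation decay of $\overline{\phi}$, then apply Yau's gradient estimate (Theorem \ref{thmgradient}) to the positive harmonic function $F-\inf F$. The only cosmetic differences are that you work on $B_{x}(2)$ rather than $B_{x}(1)$ and handle all $x$ uniformly via $\sup_{B_{x}(2)}e^{-\delta\rho}\leq e^{2\delta}e^{-\delta\rho(x)}$, whereas the paper splits into the cases $\rho(x)>1$ and $\rho(x)\leq 1$.
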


\begin{remark} In the proof, we use $C_{0}$ to denote a generic constant, which may change from line to line, but it always depends only on $a$, $b$, and the dimension $n$ of the Riemannian manifold. 
\end{remark}

\begin{proof}
Given $\phi \in C^{1}(S(\infty))$ by Theorem \ref{thmharmonic} there exists a unique harmonic function $F \in C^{\infty}(M) \cap C^{0}(\overline{M})$ with $F|_{S(\infty)} = \phi$.  By \eqref{step3} we also have
\begin{equation}\label{Perron}
\overline{\phi} - \alpha e^{-\delta \rho} \leq F \leq \overline{\phi} + \alpha e^{-\delta \rho},
\end{equation}
where $\bar \phi$ is as in \eqref{step2} and $\delta<a$ and $\alpha = \frac{2 C_{0} \|\phi '\|_{\infty} }{ \delta [(n-1)a - \delta ]}$ as discussed in Section \ref{constants}.  Let $x \in M$ and consider two cases. 
\newline
\noindent
{\textbf{Case 1: $\rho(x) > 1$.}}
\newline\indent
Consider a ball $B_{x}(1)$. By \eqref{Perron} 
\begin{equation*}
\begin{split}
\osc_{B_{x}(1)} F & \mathrel{\mathop:}= \sup_{B_{x}(1)} F - \inf_{B_{x}(1)} F \\
& \leq  sup_{B_{x}(1)} (\overline{\phi} + \alpha e^{-\delta \rho}) - inf_{B_{x}(1)} (\overline{\phi} - \alpha e^{-\delta \rho}).
\end{split}
\end{equation*}
Next since $ \inf_{B_{x}(1)} [ \overline{\phi} - \alpha e^{-\delta \rho}  ] \geq  \inf_{B_{x}(1)}  \overline{\phi} - \alpha \cdot \sup_{B_{x}(1)} e^{-\delta \rho}$ and 
$ \sup_{B_{x}(1)} e^{-\delta \rho} = e^{-\delta (\rho(x) - 1)}  $, it follows that
\begin{align*}
\osc_{B_{x}(1)} F \leq  \osc_{B_{x}(1)} \overline{\phi} + 2 \alpha e^{-\delta (\rho(x) - 1)},
\end{align*}
which implies that the following inequality is valid on $B_{x}(1)$ 
\begin{equation*}
0 \leq F - \inf_{B_{x}(1)}F \leq  \osc_{B_{x}(1)} F \leq  \osc_{B_{x}(1)} \overline{\phi} + 2 \alpha e^{-\delta (\rho(x) - 1)}.
\end{equation*}
Now, notice that since $F - inf_{B_{x}(1)}F$ is a \emph{positive} harmonic function on $B_{x}(1)$ we can apply the gradient estimate, Theorem \ref{thmgradient},  to deduce the following inequality \emph{for any $y \in B_{x}(\frac{1}{2})$ }
\begin{equation}
\begin{split}
|\nabla F| (y) = |\nabla [F(y) - inf_{B_{x}(1)}F]|  & \leq C_{0} (1+a) [F(y) - inf_{B_{x}(1)}F] \\
& \leq  C_{0} \{ osc_{B_{x}(1)} \overline{\phi} + 2 \alpha e^{-\delta (\rho(x) - 1)}\}\\
& \leq  C_{0} [C_{0} \|\phi '\|_{\infty} + 2 \alpha e^{\delta}] e^{-\delta \rho (x)} \\
& =  C_{0} \{C_{0} \|\phi '\|_{\infty} +  \frac{4 C_{0} \|\phi '\|_{\infty} }{ \delta [(n-1)a - \delta ]} e^{\delta} \} e^{-\delta \rho (x)}.
\end{split}
\end{equation}
So, in particular, if we choose $y = x$ in the above inequality, we have the important conclusion
\begin{equation}\label{gradientinequalityone}
|\nabla F|(x) \leq   C_{0} \{C_{0} \|\phi '\|_{\infty} +  \frac{4 C_{0} \|\phi '\|_{\infty} }{ \delta [(n-1)a - \delta ]} e^{\delta} \} e^{-\delta \rho (x)}\quad \forall x \in M-B_{o}(1) .
\end{equation}
We now consider the case of $x \in B_{0}(1)$. 
\newline\noindent
{\textbf{Case 2: $\rho(x)\leq 1$.}}
\newline\indent
Here we have $e^{-a} \leq e^{-a \rho (x)}$, and $\sup_{B_{x}(1)} e^{-\delta \rho} = 1$.  Hence 
\begin{align*}
\begin{split}
\osc_{B_{x}(1)} F & \leq  \osc_{B_{x}(1)} \overline{\phi} + 2 \alpha \sup_{B_{x}(1)} e^{-\delta \rho}\\
& \leq  C_{0} \|\phi '\|_{\infty} e^{-a \rho} + 2 \alpha \\
& =  C_{0} \|\phi '\|_{\infty} e^{-a \rho} + 2 \alpha e^{a} e^{-a}\\
& \leq [C_{0}\|\phi '\|_{\infty} + 2 \alpha  e^{a}]e^{-a\rho}\\
& \leq  [C_{0}\|\phi '\|_{\infty} + 2 \alpha e^{a}]e^{-\delta\rho}\\
&= \{C_{0}\|\phi '\|_{\infty} +  \frac{4 C_{0} \|\phi '\|_{\infty} }{ \delta [(n-1)a - \delta ]}e^{a} \}e^{-\delta\rho}
\end{split}
\end{align*}
Next, as in Case 1 we can apply the gradient estimate, Theorem \ref{thmgradient}, to  $F - inf_{B_{x}(1)}F$ to obtain for any $y \in B_{x}(\frac{1}{2})$
\begin{equation}
\begin{split}
|\nabla F| (y) = |\nabla [F(y) - \inf_{B_{x}(1)}F]|  & \leq C_{0} (1 + a)[F(y) - \inf_{B_{x}(1)}F] \\
& \leq  C_{0}\osc_{B_{x}(1)} F \\ 
& \leq  C_{0} \{C_{0}\|\phi '\|_{\infty} +  \frac{4 C_{0} \|\phi '\|_{\infty} }{ \delta [(n-1)a - \delta ]}e^{a}  \} e^{-\delta\rho} .
\end{split}
\end{equation}
By taking $y=x$ in the above inequality, we deduce 
\begin{equation}\label{gradientinequalitytwo}
|\nabla F|(x) \leq  C_{0}  \{C_{0}\|\phi '\|_{\infty} + \frac{4 C_{0} \|\phi '\|_{\infty} }{ \delta [(n-1)a - \delta ]} e^{a} \} e^{-\delta\rho(x)}\quad \forall x \in B_{O}(1) .
\end{equation}
By combining (\ref{gradientinequalityone}) and (\ref{gradientinequalitytwo}) we have that \eqref{edecay} holds for all $x \in M$ as needed. 
%Since $\delta$ is chosen to be $\frac{a}{2} < \delta < a$, the above decay rate will gives $\|\nabla F \|_{L^{2}(\mathbb{H}^{2})} < \infty$ immediately. 
\end{proof}
By \eqref{edecay} and the discussion in Section \ref{Jacobidisscussion}, we immediately have the following corollaries
\begin{cor}\label{L2spaceform}
In addition if $\delta >\frac a2$, then 
\[
\norm{\nabla F}_{L^{2}(\H)}<\infty.
\]
\end{cor}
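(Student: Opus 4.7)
The plan is straightforward: combine the pointwise exponential decay from Proposition \ref{exponentialdecayofgradientF} with the explicit growth rate of the volume form on $\H$ derived in Section \ref{Jacobidisscussion}, and check that the condition $\delta > a/2$ is precisely what makes the resulting one-dimensional integral converge.

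First I would fix a base point $O \in \H$ and work in geodesic normal polar coordinates $(\rho, \theta)$ on $\H$, so that $\H$ is globally parametrized by $(0,\infty) \times S^{1}$ up to the origin (which has measure zero). As noted at the end of Section \ref{Jacobidisscussion}, on the space form $\mathbb{H}^{2}(-a^{2})$ the Jacobi field comparison is an equality, so the volume form is exactly
\begin{equation*}
dV = \tfrac{1}{a}\sinh(a\rho)\, d\rho\, d\theta.
\end{equation*}
Next, pick $\delta$ with $a/2 < \delta < a$, which is possible since the hypothesis gives $\delta > a/2$ while Proposition \ref{exponentialdecayofgradientF} requires $\delta < a$; by that proposition applied with $n=2$,
\begin{equation*}
|\nabla F|(x) \leq C_{0}\Bigl\{1 + \tfrac{1}{\delta(a-\delta)}\Bigr\}\|\phi'\|_{\infty}\, e^{-\delta \rho(x)}
\end{equation*}
for all $x \in \H$.

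Squaring the pointwise estimate and inserting it into the volume integral, the key reduction is
\begin{equation*}
\int_{\H} |\nabla F|^{2}\, dV \leq C(a,\delta,\|\phi'\|_{\infty})\int_{0}^{2\pi}\!\!\int_{0}^{\infty} e^{-2\delta \rho}\, \tfrac{1}{a}\sinh(a\rho)\, d\rho\, d\theta.
\end{equation*}
Using $\sinh(a\rho) \leq \tfrac{1}{2}e^{a\rho}$ the radial integrand is controlled by $e^{-(2\delta - a)\rho}$, and the integral converges precisely because $2\delta - a > 0$, yielding
\begin{equation*}
\int_{0}^{\infty} e^{-2\delta \rho}\sinh(a\rho)\, d\rho < \infty.
\end{equation*}
Multiplying by the harmless $\theta$-integration over $[0,2\pi]$ gives the desired finiteness.

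There is no real obstacle here; the only subtle point is making sure the range $a/2 < \delta < a$ is nonempty so that we may simultaneously invoke the gradient decay of Proposition \ref{exponentialdecayofgradientF} (which needs $\delta < a$) and the convergence of the radial integral (which needs $2\delta > a$). Both are compatible as long as $a > 0$, and this compatibility is exactly why the hypothesis $\delta > a/2$ is imposed. The same calculation would fail in dimension three, where the volume weight $\tfrac{1}{a^{2}}\sinh^{2}(a\rho)$ grows like $e^{2a\rho}$ and would require $\delta > a$, contradicting $\delta < a$ — consistent with the paper's emphasis that the two-dimensional setting is special.
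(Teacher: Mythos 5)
Your proof is correct and follows exactly the route the paper intends: the authors state the corollary as an immediate consequence of the decay estimate \eqref{edecay} together with the volume-form discussion of Section \ref{Jacobidisscussion}, which is precisely the computation you carry out. Your explicit verification that the range $a/2 < \delta < a$ is nonempty, and the remark contrasting with the three-dimensional volume growth, match the paper's own commentary in the introduction.
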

\begin{cor}\label{L2M}
Let $M$ be a complete, simply connected $2$-dim manifold with sectional curvature satisfying $-b^{2}\leq K_{M}\leq -a^{2}$, where $a, b>0$ and $\frac b2<a$, if $\frac b2<\delta <a$, then
\[
\norm{\nabla F}_{L^{2}(M)}<\infty.
\]
\end{cor}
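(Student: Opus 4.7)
The plan is to combine the pointwise exponential decay from Proposition \ref{exponentialdecayofgradientF} with the volume growth bound from the Jacobi field comparison theorem, and then check that the condition $\frac{b}{2} < \delta < a$ is precisely what is needed to make the resulting integral converge.

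First, since $M$ is complete, simply connected with $-b^{2} \leq K_{M} \leq -a^{2} < 0$, the Cartan--Hadamard theorem applies, so we may fix a base point $O \in M$ and use the globally defined geodesic normal polar coordinates $(r, \theta)$ based at $O$. In these coordinates the volume form reads $G(r,\theta)\, dr\, d\theta$, where $G(r,\theta) = \left|\frac{\partial}{\partial \theta}\right|$. By the discussion in Section \ref{Jacobidisscussion}, together with Theorem \ref{JacobifieldThm} applied to the Jacobi field along the geodesic $\theta \mapsto c(r)$ satisfying $Y(0)=0$ and $|\overline{\nabla}_{\partial_{r}}Y|(0)=1$, we obtain the upper bound
\begin{equation*}
G(r,\theta) \;\leq\; \frac{1}{b}\sinh(b r), \qquad r \geq 0.
\end{equation*}

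Next, since $\delta < a$, Proposition \ref{exponentialdecayofgradientF} yields a constant $C = C(a,b,\delta,\|\phi'\|_{\infty})$ such that $|\nabla F|(x) \leq C\, e^{-\delta \rho(x)}$ for all $x \in M$, and hence, using \eqref{samenorm},
\begin{equation*}
\int_{M} |\nabla F|^{2} \;\leq\; C^{2} \int_{0}^{2\pi}\!\!\int_{0}^{\infty} e^{-2\delta r}\, G(r,\theta)\, dr\, d\theta \;\leq\; \frac{2\pi C^{2}}{b}\int_{0}^{\infty} e^{-2\delta r}\sinh(b r)\, dr.
\end{equation*}

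Finally, writing $\sinh(br) = \tfrac{1}{2}(e^{br}-e^{-br})$, the integrand becomes $\tfrac{1}{2}\bigl(e^{(b-2\delta)r} - e^{-(b+2\delta)r}\bigr)$, which is integrable on $[0,\infty)$ precisely when $b - 2\delta < 0$, i.e., when $\delta > \tfrac{b}{2}$. This is exactly the lower bound in our hypothesis $\tfrac{b}{2} < \delta < a$, so the integral is finite and $\|\nabla F\|_{L^{2}(M)} < \infty$, as claimed. There is no serious obstacle here since all the hard work was done in Proposition \ref{exponentialdecayofgradientF} and in the Jacobi field comparison theorem; the role of this corollary is just to record that the competition between the decay rate $e^{-2\delta r}$ of $|\nabla F|^{2}$ and the worst-case volume growth rate $\tfrac{1}{b}\sinh(br)$ of $M$ is resolved in favor of integrability exactly under the stated curvature pinching $\tfrac{b}{2} < a$.
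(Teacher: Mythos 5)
Your proof is correct and follows essentially the same route the paper intends: the paper dismisses this corollary as "immediate" from the decay estimate \eqref{edecay} and the volume-form bound $G(r,\theta)\leq\frac{1}{b}\sinh(br)$ of Section \ref{Jacobidisscussion}, and the convergence computation you write out is exactly the one the paper itself performs later in \eqref{est2}. Nothing to add.
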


\section{The proof that $\|\nabla \abs{\nabla F}^{2}    \|_{L^{1}(\mathbb{H}^{2}(-a^{2}))}$ is finite }

In the proof of $\|\nabla \abs{\nabla F}^{2}    \|_{L^{1}(\mathbb{H}^{2}(-a^{2}))} < \infty$, we need the assistance of the following geometric lemma, which is itself a consequence of lemma \ref{angleexpdecayversiontwo}.

\begin{lemma}\label{coveringlemma}[Covering Lemma]
Consider $M$ to be a simply connected, complete 2-dimensional Riemannian manifold with sectional curvature $-b^{2} \leq K_{M} \leq -a^{2}$ . Let $O$ be a selected base point in $M$, and let $\rho$ be the distance function from $O$. Then, there exists some sufficiently large universal constant $\overline{R}_{0} > 2$ such that the following assertion holds\\

For any given $R \geq \overline{R}_{0}$, if we take the positive integer $N(R) = [\frac{2\pi}{2e^{-bR}}]+1 = [\pi e^{bR}] + 1$ (here, the symbol $[\lambda]$ means the largest integer $N \in \mathbb{Z}$ with $N \leq \lambda$), then, we can pick a list of vectors $v_{1}, v_{2}, v_{3}, .... v_{N(R)} \in S(\infty) $, which are \emph{evenly distributed on the circle $S(\infty)$} in such a way that we have the following inclusion  
\begin{equation}
\{x \in M : R-1 \leq \rho (x) \leq R+ 1 \} \subset \cup_{i=1}^{N(R)} B_{c_{v_{i}}(R)} \big(3(1+\tfrac{1}{b})\big) ,
\end{equation}
where for each $1 \leq N(R)$, $c_{v_{i}} :[0,\infty) \rightarrow M$ is the geodesic ray of unit speed with $c_{v_{i}}(0) = O$, and $c_{v_{i}}'(0) = v_{i}$.
\end{lemma}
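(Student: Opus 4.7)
The plan is a direct application of Lemma \ref{angleexpdecayversiontwo} combined with the triangle inequality. The arithmetic choice $N(R) = [\pi e^{bR}] + 1$ is calibrated so that $N(R)$ unit vectors evenly distributed on the circle $S_O(1) \cong S(\infty)$ have angular spacing at most $\frac{2\pi}{N(R)} \leq 2e^{-bR}$. Consequently, any $v \in S(\infty)$ sits within angle $e^{-bR}$ of some $v_i$, and by Lemma \ref{angleexpdecayversiontwo} such an angular tolerance translates into a transverse geodesic distance at radius $R$ that is bounded by a constant depending only on $b$.

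Concretely, I would first take $\overline{R}_0 > 2$ to be at least as large as the universal constant $R_0$ supplied by Lemma \ref{angleexpdecayversiontwo} (which depends only on $a$, $b$, and $n = 2$), and for $R \geq \overline{R}_0$ place $v_1, \dots, v_{N(R)}$ uniformly on $S_O(1)$. Given any $x \in M$ with $R-1 \leq \rho(x) \leq R+1$, let $v_x \in S_O(1)$ be the initial velocity of the unique unit-speed geodesic from $O$ to $x$, and pick the nearest $v_i$, so that $\theta := \angle(v_x, v_i) \leq \frac{\pi}{N(R)} \leq e^{-bR}$. Introduce the two auxiliary points $x' := c_{v_x}(R)$ and $y := c_{v_i}(R)$, both sitting on the geodesic sphere of radius $R$ around $O$.

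Applying the upper bound from Lemma \ref{angleexpdecayversiontwo} to $x'$ and $y$ yields
\begin{equation*}
|\overline{x'y}| \;\leq\; 2R + \frac{2}{b}(\log\theta + 1) \;\leq\; 2R + \frac{2}{b}(-bR + 1) \;=\; \frac{2}{b},
\end{equation*}
while clearly $|\overline{xx'}| = |\rho(x) - R| \leq 1$. The triangle inequality then gives
\begin{equation*}
d(x,y) \;\leq\; |\overline{xx'}| + |\overline{x'y}| \;\leq\; 1 + \frac{2}{b} \;<\; 3\Bigl(1 + \tfrac{1}{b}\Bigr),
\end{equation*}
so $x \in B_{c_{v_i}(R)}\bigl(3(1 + \tfrac{1}{b})\bigr)$, which is the desired inclusion.

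There is no substantial obstacle here: all the genuinely geometric content has already been absorbed into Lemma \ref{angleexpdecayversiontwo} through the Jacobi field comparison theorem for the lower curvature bound $-b^{2}$, and the exponential numerology $N(R) \sim e^{bR}$ is dictated precisely by the $\tfrac{2}{b}\log\theta$ term in that estimate. The potentially degenerate case $\theta = 0$ is harmless since then $v_x = v_i$, $x' = y$, and the triangle inequality alone yields $d(x,y) \leq 1$. The generous factor $3$ in the ball radius $3(1+\tfrac{1}{b})$ leaves comfortable slack, which is presumably exploited when this covering lemma is invoked in the subsequent integrability arguments.
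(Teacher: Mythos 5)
Your proof is correct and follows essentially the same route as the paper's: evenly spaced directions with $N(R)=[\pi e^{bR}]+1$, the upper bound of Lemma \ref{angleexpdecayversiontwo} applied at angle $\theta\leq e^{-bR}$, and a triangle inequality through an auxiliary point to absorb the radial offset of at most $1$. The only (immaterial) difference is that you project $x$ radially onto the sphere of radius $R$ and compare two points at radius $R$, whereas the paper compares $x$ with $c_{v_i}(R+\lambda)$ at radius $\rho(x)$; both land comfortably inside the ball of radius $3(1+\tfrac{1}{b})$.
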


\begin{remark}
In words, what the conclusion of the lemma is saying is that if we consider an annulus in $M$ with inner radius $R-1$ and outer radius $R+ 1$, where $R$ is big enough, then we can cover it by $N(R)$ geodesic balls centered at $c_{v_{i}}(R)$with radius  $\big(3(1+\tfrac{1}{b})\big)$.
\end{remark}

\begin{proof}
To begin, let us select a base point $O$ in $M$, and let $\rho$ be the distance function from $O$.
By Lemma \ref{angleexpdecayversiontwo} there exists a sufficiently large universal constant $R_{0} >1$ such that for any two points $x_{1}$, $x_{2}$ in $M$ with $R = |\overline{Ox_{1}}| = |\overline{Ox_{2}}|$ satisfying $R \geq R_{0}$, we have the following  
\begin{equation}\label{angleexpdecayinequality}
2R + \frac{2}{a} (\log \theta -1) \leq |\overline{x_{1}x_{2}}| \leq 2R + \frac{2}{b} (\log \theta + 1) ,
\end{equation}
where $\theta =  \angle (\overline{Ox_{1}} , \overline{Ox_{2}})$.\\
From now on, we use the universal constant $\overline{R}_{0} = R_{0} + 1$. Now, choose any $R \geq \overline{R}_{0}$, and let $v \in S(\infty)$. We then consider the geodesic ray 
\begin{align*}
&c_{v} : [0,\infty ) \rightarrow M,\\
&c_{v}(0) =O,\quad\mbox{and}\quad c_{v}'(0) = v.
\end{align*} 
Now, we consider the universal angle $\theta^{(R)} = e^{-bR}$, and the sector $T_{O}(v,\theta^{(R)} ; R-1, R+1)$ defined by 
\begin{equation*}
T_{O}(v,\theta^{(R)} ; R-1, R+1) = \{ x \in C_{O}(v, \theta^{(R)}) : R-1 \leq \rho(x) \leq R+1             \} ,
\end{equation*}
where the cone $C_{O}(v, \theta^{(R))}$ was defined in Section \ref{balls}.
Our goal is to prove that $ T_{O}(v,\theta^{(R)} ; R-1, R+1) \subset B_{c_{v}(R)}(3(1+\frac{1}{b}))$. To this end, let $x \in T_{O}(v,\theta^{(R)} ; R-1, R+1) $. Then, $\rho(x) = R + \lambda$, for some $\lambda \in [-1,1]$. By the triangle inequality, we have
\begin{equation}
|\overline{c_{v}(R)x}| \leq |\overline{c_{v}(R) c_{v}(R+\lambda)}|  + |\overline{c_{v}(R+\lambda)x}| \leq |\lambda| + |\overline{c_{v}(R+\lambda)x}| .
\end{equation}
But from \eqref{angleexpdecayinequality} with $x_{1} = c_{v}(R+\lambda)$ and $x_{2} = x$, it follows
\begin{equation}
\begin{split}
|\overline{c_{v}(R+\lambda)x}| & \leq 2(R + |\lambda|) + \frac{2}{b} \{\log [\angle (\overline{Oc_{v}(R+\lambda)} ,\overline{Ox})] + 1\} \\
& \leq 2R + 2 + \frac{2}{b} [ \log (\theta^{(R)}) + 1]\\
& = 2R + 2 + \frac{2}{b} \{ \log (e^{-bR}) + 1 \} \\
& = 2R + 2 + \frac{2}{b} (-bR + 1) \\
& = 2 + \frac{2}{b} .
\end{split}
\end{equation}
Hence
\begin{equation}
 |\overline{c_{v}(R)x}| \leq |\lambda| + |\overline{c_{v}(R+\lambda)x}|  \leq |\lambda| + 2(1+\frac{1}{b}) < 3(1+ \frac{1}{b}) .
\end{equation}
This shows that every $x \in T_{O}(v,\theta^{(R)} ; R-1, R+1)$ must lie in the geodesic ball $B_{c_{v}(R)}(3(1+\frac{1}{b}))$.  
To conclude the proof, we just take the integer $N(R) = [\frac{2\pi}{2e^{-bR}}] + 1$. Then, we can select some \emph{evenly distributed} vectors $v_{1}, v_{2}, ..., v_{N(R)} \in S(\infty)$ such that 

\begin{equation}\label{Sectorcovering}
\{x \in M : R-1 \leq \rho (x) \leq R + 1\} = \cup_{i=1}^{N(R)}  T_{O}(v_{i}, \theta^{(R)} ; R-1, R+1).
\end{equation}
Since we already know that, for each $1 \leq i \leq N(R)$, we have $ T_{O}(v_{i},\theta^{(R)} ; R-1, R+1) \subset B_{c_{v_{i}}(R)}(3(1+\frac{1}{b}))$,
in which $c_{v_{i}} : [0,\infty ) \rightarrow M$ is the geodesic with $c_{v_{i}}(0) = O$ and $c_{v_{i}}'(0) = v_{i}$, it follows at once from relation \eqref{Sectorcovering} that

\begin{equation}
\begin{split}
\{x \in M : R-1 \leq \rho (x) \leq R + 1\} & = \cup_{i=1}^{N(R)}  T_{O}(v_{i}, \theta^{(R)} ; R-1, R+1) \\
& \subset \cup_{i=1}^{N(R)} B_{c_{v_{i}}(R)}(3(1+\frac{1}{b})) ,
\end{split}
\end{equation}

as desired.

\end{proof}

With the help of the Covering Lemma \ref{coveringlemma} and Lemma \ref{integralforsubharmonic}, we can now prove the following fact.

\begin{prop}\label{L1finite}
Let $a, b>0$ satisfy $\frac{1}{2}b < a \leq b$, and let $M$ be a simply connected, complete $2$-dimensional Riemannian manifold with sectional curvature $-b^{2} \leq K_{M} \leq -a^{2}$. Then, for any bounded harmonic function $F \in C^{\infty}(M) \cap C^{0}(\overline{M}),$ which arises from $C^{1}$- boundary data $\phi$, it follows that 
$\int_{M} |\nabla |\nabla F|^{2}| < \infty$.
\end{prop}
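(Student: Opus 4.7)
The plan is to pick $\delta$ with $\frac{b}{2}<\delta<a$ (possible by the hypothesis $\frac{1}{2}b<a\le b$), introduce the barrier $f:=|\nabla F|^{2}+Ae^{-2\delta\rho}$ for a sufficiently large constant $A>0$, and establish two things: (i) $f$ is subharmonic on an outer region $\Omega_{\delta}:=\{\rho>R(\delta)/b\}$, and (ii) $\int_{M}|\nabla f|<\infty$. Once (ii) is in hand, the triangle inequality
\[
|\nabla|\nabla F|^{2}|\;\le\;|\nabla f|+2A\delta\,e^{-2\delta\rho}
\]
together with $\int_{M}e^{-2\delta\rho}<\infty$ — which is ensured by $2\delta>b$ and the Jacobi-field volume comparison of Section~\ref{Jacobidisscussion} — will give the proposition.

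For (i), combining identity~\eqref{important} with $\Delta F=0$ and $\Ric\ge -b^{2}g$ yields $\Delta|\nabla F|^{2}\ge -2b^{2}|\nabla F|^{2}$. Using $|\nabla\rho|^{2}=1$ and the upper Laplace comparison $\Delta\rho\le b\coth(b\rho)$ recalled in Section~\ref{moreprelim}, one computes
\[
\Delta e^{-2\delta\rho}\;=\;2\delta\,e^{-2\delta\rho}(2\delta-\Delta\rho)\;\ge\;2\delta\eta\,e^{-2\delta\rho}\quad\text{on }\Omega_{\delta},
\]
for some $\eta=\eta(a,b,\delta)>0$, provided $R(\delta)$ is chosen so that $b\coth(b\rho)\le 2\delta-\eta$ whenever $\rho>R(\delta)/b$; this is possible precisely because $\coth\to 1$ at infinity and $\delta>b/2$. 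Substituting the pointwise decay $|\nabla F|^{2}\le C_{1}^{2}e^{-2\delta\rho}$ from Proposition~\ref{exponentialdecayofgradientF} and choosing $A$ large enough that $2A\delta\eta\ge 2b^{2}C_{1}^{2}$ then produces $\Delta f\ge 0$ on $\Omega_{\delta}$.

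To upgrade subharmonicity to a local $L^{1}$ gradient bound, I would apply Lemma~\ref{integralforsubharmonic} to $f$ on geodesic balls $B_{x}(R_{0})$ of fixed radius $R_{0}=3(1+1/b)$ whose appropriate dilates still lie in $\Omega_{\delta}$, obtaining $\int_{B_{x}(R_{0})}|\nabla f|^{2}\le CR_{0}^{-2}\int_{B_{x}(CR_{0})}f^{2}$. Since $f\le (C_{1}^{2}+A)e^{-2\delta\rho}$ pointwise and the volume of $B_{x}(CR_{0})$ is bounded by a constant depending only on $a$ and $b$ (via Theorem~\ref{JacobifieldThm}), the right-hand side is dominated by $C(a,b,\norm{\phi'}_{\infty})\,e^{-4\delta\rho(x)}$; Cauchy-Schwarz together with the same volume estimate then gives $\int_{B_{x}(R_{0})}|\nabla f|\le C(a,b,\norm{\phi'}_{\infty})\,e^{-2\delta\rho(x)}$. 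Decomposing $\Omega_{\delta}$ into shells $\{k-1\le\rho\le k+1\}$ for integers $k\ge k_{0}$ and invoking the Covering Lemma~\ref{coveringlemma} to cover each shell by $N(k)=[\pi e^{bk}]+1$ such balls produces the geometric series
\[
\int_{\Omega_{\delta}}|\nabla f|\;\le\;C\sum_{k\ge k_{0}}N(k)\,e^{-2\delta k}\;\le\;C\sum_{k\ge k_{0}}e^{(b-2\delta)k},
\]
which converges because $\delta>b/2$. The complementary region $\{\rho\le k_{0}+1\}$ is compact by the Cartan-Hadamard identification $M\cong\R^{2}$, so the continuous function $|\nabla f|$ contributes a finite integral there, completing (ii).

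The main obstacle is the simultaneous balancing in step (i): the parameter $\delta$ must be small enough ($\delta<a$) to inherit the Proposition~\ref{exponentialdecayofgradientF} decay, large enough ($\delta>b/2$) for the barrier $e^{-2\delta\rho}$ to be subharmonic at infinity against the upper comparison $\Delta\rho\le b\coth(b\rho)\to b$, and large enough again ($2\delta>b$) for $e^{-2\delta\rho}$ to be integrable against the volume form of $M$, whose worst-case growth is $\sim\sinh(b\rho)/b$. That these three ranges intersect is precisely the hypothesis $\frac{1}{2}b<a\le b$, and their compatibility is also the geometric reason the method is trapped in dimension two.
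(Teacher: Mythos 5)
Your proposal is correct and follows essentially the same route as the paper's proof: the same barrier $|\nabla F|^{2}+Ae^{-2\delta\rho}$ with $\tfrac{b}{2}<\delta<a$, the same subharmonicity argument on an outer region via \eqref{important} and the upper Laplace comparison, the same use of Lemma \ref{integralforsubharmonic} plus Cauchy--Schwarz and volume comparison on balls of radius $3(1+1/b)$, and the same shell decomposition with the Covering Lemma \ref{coveringlemma} yielding the convergent series $\sum N(k)e^{-2\delta k}$. The only cosmetic difference is that you package the choice of $R(\delta)$ as $b\coth(b\rho)\le 2\delta-\eta$ rather than via the bound $b(1+1/R(\delta))<2\delta$, which is the same estimate.
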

\begin{remark} We note that in the proof of Proposition \ref{L1finite}, \emph{it is not necessary for us to obtain a uniform bound of $\int_{M} |\nabla |\nabla F|^{2}| < \infty$ in terms of, say, $\|\phi'\|_{\infty}$}. All we need is just to confirm that the integral $\int_{M} |\nabla |\nabla F|^{2}|  $ \emph{is finite}, because this is already enough to ensure that $\int_{M} div\{\nabla |\nabla F|^{2} \} = 0$.
\end{remark}
\begin{proof}
As usual, we begin with a bounded harmonic function $F \in C^{\infty}(M) \cap C^{0}(\overline{M})$ such that $F\big|_{S(\infty)}=\phi$.  Let $\overline{R}_{0}$ be the sufficiently large universal constant as determined in Lemma \ref{coveringlemma}.
\newline\indent
Since $F$ is smooth on $M$, in order to prove that $\int_{M} |\nabla |\nabla F|^{2}| < \infty$, it is sufficient to see that 
$\int_{M -B_{O}(R)} |\nabla |\nabla F|^{2}| < \infty$, for some large $R > \overline{R}_{0}$, where $O$ is a selected base point in $M$.\\
We first write
\begin{equation}\label{est3}
\begin{split}
|\nabla |\nabla F|^{2}|  & \leq |\nabla \{|\nabla F|^{2} + A e^{-2\delta \rho} \}|  + A |\nabla e^{-2\delta \rho}| \\
& = |\nabla \{|\nabla F|^{2} + A e^{-2\delta \rho} \}| + A (2 \delta ) e^{-2\delta \rho}.
\end{split}
\end{equation}
We estimate the first term on the right.  First, by \eqref{important}
\[
\Delta [ |\nabla F|^{2}](x) = 2  [\partial_{i}\partial_{j}F]^{2}(x) + 2 \partial_{i}F(x) \partial_{i} (\Delta F)(x) + 2Ric (\nabla F , \nabla F)(x).
\]
Since $\Delta F = 0$ it follows from the above formula that

\begin{equation}\label{subharmonicprep}
\Delta [ |\nabla F|^{2}] \geq 2\Ric (\nabla F , \nabla F) \geq -2b^{2} |\nabla F|^{2}.
\end{equation}

To proceed further, we take $\delta $ to be any \emph{fixed choice} of positive number within the range $\frac{b}{2} < \delta < a$ (i.e., we choose such a $\delta$ once and for all) and by Proposition \ref{exponentialdecayofgradientF} we have

\be\label{decay2}
|\nabla F| \leq C_{a,b} \|\phi'\|_{\infty} e^{-\delta \rho},
\ee where the constant $C_{a,b}$ depends only on $n=2$ and $a$, and $b$. Notice that our fixed choice of $\delta \in (\frac{b}{2} , a )$ automatically satisfies the condition $\delta > \frac{a}{2}$, due to the fact that $b \geq a$. Next, since $\delta > \frac{b}{2}$, we can choose some sufficiently large positive number  depending on $\delta$, $R(\delta) > 2$, such that 

\begin{equation}\label{survivalofsecondineq}
\frac{b}{2} < \frac{b}{2} (1 + \frac{1}{R(\delta)} ) < \delta < a \leq b.
\end{equation}

Next, we have to find some $A >0$ large enough, and some sufficiently large radius $R$ such that the function $ |\nabla F|^{2} + A e^{-2\delta \rho}$ will be subharmonic on $\mathbb{H}^{2}-B_{O}(R)$. To achieve this we use \eqref{edist3} (with the condition $K_{M} \geq -b^{2}$) and observe
\begin{equation}
\Delta \rho \chi_{\{\rho \geq \frac{R(\delta)}{b}\}} \leq b\cdot \coth(b\rho) \chi_{\{\rho \geq \frac{R(\delta)}{b}\}} \leq (b + \frac{1}{\rho}) \chi_{\{\rho \geq \frac{R(\delta)}{b}\}} 
\leq b (1 + \frac{1}{R(\delta)}) \chi_{\{\rho \geq \frac{R(\delta)}{b}\}}.
\end{equation}

Hence from \eqref{subharmonicprep}, \eqref{decay2} and \eqref{step3a}
\begin{equation}\label{subharmonicfinal}
\begin{split}
\Delta \{|\nabla F|^{2} + A e^{-2\delta \rho}\} \chi_{\{\rho \geq \frac{R(\delta)}{b}\}} & \geq \{-2b^{2} C_{a,b}^{2} \|\phi'\|_{\infty}^{2} e^{-2\delta \rho} +
 A (2\delta) e^{-2\delta \rho} [2\delta - \Delta \rho] \}\chi_{\{\rho \geq \frac{R(\delta)}{b}\}} \\
& \geq 2 \{ A \delta [2\delta - b (1+\frac{1}{R(\delta)})] - b^{2} C_{a,b}^{2}   \|\phi'\|_{\infty}^{2}    \} e^{-2\delta \rho} \chi_{\{\rho \geq \frac{R(\delta)}{b}\}} \\
\end{split}
\end{equation}
Notice that we definitively have $2\delta - b (1+\frac{1}{R(\delta)}) > 0$, thanks to our choice of $R(\delta)$ which ensures the survival of the second inequality sign in \ref{survivalofsecondineq}. Next, we just take 

\begin{equation}
A = \frac{2b^{2} C_{a,b}^{2}   \|\phi'\|_{\infty}^{2} }{\delta [2\delta - b(1+\frac{1}{R(\delta)})] }. 
\end{equation}

With this choice of $A$, it follows from (\ref{subharmonicfinal}) that 

\begin{equation}
\Delta \{|\nabla F|^{2} + A e^{-2\delta \rho}\} \chi_{\{\rho \geq \frac{R(\delta)}{b}\}} \geq 0. 
\end{equation}

That is, the function $|\nabla F|^{2} + A e^{-2\delta \rho}$ is subharmonic on $M-B_{O}(\frac{R(\delta)}{b})$. So, we may apply Lemma \ref{integralforsubharmonic} to $|\nabla F|^{2} + A e^{-2\delta \rho}$ and deduce that for any geodesic ball $B_{x}(6(1+ \frac{1}{b})) \subset  M - B_{O}(\frac{R(\delta)}{b})$, we have

\begin{equation}\label{crucial}
\begin{split}
&\int_{B_{x}(3(1+\tfrac{1}{b}))} |\nabla \{|\nabla F|^{2} + A e^{-2\delta \rho} \}| \\
&\qquad \leq \abs{B_{x}\big(3(1+\tfrac{1}{b})\big)}^{\frac 12} \left\{\int_{B_{x}(3(1+\tfrac{1}{b}))} |\nabla \{|\nabla F|^{2} + A e^{-2\delta \rho} \}|^{2}\right\}^{\frac{1}{2}} \\
&\qquad \leq C_{b} \left\{\int_{B_{x}(6(1+\frac{1}{b}))} [|\nabla F|^{2} + A e^{-2\delta \rho} ]^{2} \right\}^{\frac{1}{2}} \\
&\qquad \leq C(a,b; \|\phi\|_{\infty}) e^{-2 \delta \rho (x)} ,
\end{split}
\end{equation}

where we again used \eqref{decay2} to go to the last line. We further remark that, in the above estimation, we have implicitly employed the volume comparison theorem in differential geometry which says that $K_{M} \geq -b^{2} $ implies that the volume of any geodesic ball $B_{x}(6(1+\frac{1}{b}))$ in $M$ is bounded above by a \emph{universal constant $C_{b}$ }(such a universal constant $C_{b}$ which serves as the upper bound is indeed the constant volume of any geodesic ball with radius $6(1+\frac{1}{b})$ \emph{in the space form $\mathbb{H}^{2}(-b^{2})$}).
\newline\indent
Now, let us take $K_{0}$ to be a sufficiently large positive integer for which $K_{0} \geq max\{ \overline{R}_{0}, \frac{R(\delta)}{b} + 6 (1+\tfrac{1}{b})\}$, where 
$\overline{R}_{0}$ is the sufficiently large universal constant determined in Lemma \ref{coveringlemma}. Then, by Lemma \ref{coveringlemma}, for any positive integer $k \geq K_{0} $, if we take the positive integer $N(k) = [\pi e^{bk}]+1$ then, we can pick a list of vectors $v_{k,1}, v_{k,2}, v_{k,3}, .... v_{k,N(k)} \in S({\infty}) $ in such a way that we have the following inclusion  
\begin{equation*}
\{x \in M : k-1 \leq \rho (x) \leq k + 1 \} \subset \cup_{i=1}^{N(k)} B_{c_{v_{k,i}}(k)} (3(1+\frac{1}{b})) .
\end{equation*}
By combining inequality (\ref{crucial}) and the above inclusion, it follows that
\begin{equation}\label{est1}
\begin{split}
\int_{\{\rho(x) \geq K_{0}\}} |\nabla \{|\nabla F|^{2} + A e^{-2\delta \rho} \}| & \leq \sum_{k=K_{0}}^{\infty}\int_{\{  k-1 \leq \rho (x) \leq k + 1 \}}|\nabla \{|\nabla F|^{2} + A e^{-2\delta \rho} \}|\\
& \leq \sum_{k=K_{0}}^{\infty} \sum_{i=1}^{N(k)} \int_{B_{c_{v_{k,i}}(k)} (3(1+\frac{1}{b}))}|\nabla \{|\nabla F|^{2} + A e^{-2\delta \rho} \}| \\
& \leq \sum_{k=K_{0}}^{\infty} \sum_{i=1}^{N(k)} C(a,b;\|\phi\|_{\infty}) e^{-2\delta k} \\
& \leq \sum_{k=K_{0}}^{\infty} C(a,b,\|\phi\|_{\infty}) \{[\pi e^{bk}]+1\}  e^{-2\delta k} \\
& < \infty.
\end{split}
\end{equation}
We note that to obtain the last inequality $\sum_{k=K_{0}}^{\infty} C(a,b,\|\phi\|_{\infty}) \{[\pi e^{bk}]+1\}  e^{-2\delta k}  < \infty   $, we use the fact that our fixed choice of $\delta$ lies within the range $\frac{1}{2} b < \delta < a$ , so that $(2\delta -b) > 0$ is automatic, and hence $\sum_{k=K_{0}}^{\infty} e^{bk} e^{-2\delta k} = \sum_{k=K_{0}}^{\infty}e^{-(2\delta -b)k}< \infty$.
\newline\indent
Next, we notice that the volume form on $M$ (with respect to the geodesic normal polar coordinates $(r,\theta)$ about $O$) is in the form of $G(r,\theta) dr d\theta$, where $G(r,\theta) \leq \frac{1}{b}\sinh (b r)$ thanks to the comparison theorem \ref{JacobifieldThm} for Jacobi fields (with $K_{M} \geq -b^{2}$) . Hence, it follows again from $2\delta > b$ that

\begin{equation}\label{est2}
\begin{split}
\int_{M}  e^{-2\delta \rho} & = \int_{0}^{2\pi} \int_{0}^{\infty}e^{-2\delta r}G(r,\theta) dr d\theta \\
& \leq \int_{0}^{2\pi} \int_{0}^{\infty}e^{-2\delta r}  \frac{1}{b}\sinh (b r)  dr d\theta< \infty.
\end{split}
\end{equation}  
Hence by \eqref{est3}, \eqref{est1} and \eqref{est2}
\begin{equation}\label{est4}
\int_{\{\rho(x) \geq K_{0}\}} |\nabla |\nabla F|^{2} | \leq \int_{\{\rho(x) \geq K_{0}\}} |\nabla \{|\nabla F|^{2} + A e^{-2\delta \rho} \}| + A (2 \delta ) \int_{\{\rho(x) \geq K_{0}\}}e^{-2\delta \rho} < \infty.
\end{equation}
Since $|\nabla |\nabla F|^{2} |$ is continuous in $M$, by\eqref{est4} we must have $\int_{M}|\nabla |\nabla F|^{2} | < \infty   $ as needed.
\end{proof}
%%%%%%%%%%%%%%%%%%%%%%%%%%%%%%%%%%%%%%%%%%%%%%%%%%%%%%%%%%%%%%%%%%%%%%%%%%%%%%%%%%%
%%%%%%%%%%%%%%%
%%%%%%%%%%%%%%%                  FINAL SECTION
%%%%%%%%%%%%%%%
%%%%%%%%%%%%%%%%%%%%%%%%%%%%%%%%%%%%%%%%%%%%%%%%%%%%%%%%%%%%%%%%%%%%%%%%%%%%%%%%%%%%

\section{Finite Dissipation}\label{energysection}
We begin with two propositions, which help us establish the energy inequality \eqref{energyM}.  First, using $\overline{g}( \cdot ,\cdot )$ on $ T^{*}M\otimes T^{*}M $ defined in Section \ref{prelim}, we can consider for each $1$-form $\theta$,  two non-negative valued functions $\overline{g}(  Def \theta  , Def \theta  ) \in C^{\infty}(M)$, and $\overline{g}(  \overline{\nabla} \theta  , \overline{\nabla} \theta  ) \in C^{\infty}(M)$. 
We have the following relationship between them.
\begin{lemma}\label{Deflessnabla}
For any given $n$-dimensional complete Riemannian manifold $M$, we have  
\begin{equation}
\overline{g}(  Def \theta  , Def \theta  ) \leq \overline{g}(  \overline{\nabla} \theta  , \overline{\nabla} \theta  ).
\end{equation}

\end{lemma}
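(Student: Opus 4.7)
The plan is to reduce the claim to a pointwise linear-algebraic statement expressed in a local orthonormal frame, namely that the Frobenius norm of the symmetrization of a matrix is bounded by the Frobenius norm of the matrix itself.

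Fix $x \in M$, choose a local orthonormal frame $e_{1},\dots,e_{n}$ for $TM$ near $x$, and let $\eta^{1},\dots,\eta^{n}$ be the dual coframe. By the characterization of $\overline{g}(\cdot,\cdot)$ recalled in Section \ref{diffgeom}, the collection $\{\eta^{i}\otimes\eta^{j}\}$ is orthonormal in $T^{\ast}M\otimes T^{\ast}M$. Writing
\[
a_{ij}(x) \;=\; (\overline{\nabla}_{e_{i}}\theta)(e_{j}) \;=\; \langle e_{j},\overline{\nabla}_{e_{i}}\theta\rangle_{TM\otimes T^{\ast}M},
\]
one has $\overline{\nabla}\theta = \sum_{i,j} a_{ij}\,\eta^{i}\otimes\eta^{j}$, so
\[
\overline{g}(\overline{\nabla}\theta,\overline{\nabla}\theta)(x) \;=\; \sum_{i,j} a_{ij}(x)^{2}.
\]
Meanwhile, the defining formula \eqref{defforDefform} shows that $\Def\theta$ is exactly the symmetrization of $\overline{\nabla}\theta$ as a $(0,2)$-tensor, i.e.
\[
\Def\theta \;=\; \sum_{i,j} \tfrac{1}{2}\bigl(a_{ij}+a_{ji}\bigr)\,\eta^{i}\otimes\eta^{j},
\]
which gives $\overline{g}(\Def\theta,\Def\theta)(x) = \sum_{i,j}\tfrac{1}{4}(a_{ij}+a_{ji})^{2}$.

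The claim then reduces to the elementary inequality
\[
\sum_{i,j}\tfrac{1}{4}(a_{ij}+a_{ji})^{2} \;\leq\; \sum_{i,j} a_{ij}^{2},
\]
which follows by expanding the square and applying $2a_{ij}a_{ji}\leq a_{ij}^{2}+a_{ji}^{2}$:
\[
\sum_{i,j}\tfrac{1}{4}(a_{ij}+a_{ji})^{2} \;=\; \tfrac{1}{2}\sum_{i,j}a_{ij}^{2} + \tfrac{1}{2}\sum_{i,j}a_{ij}a_{ji} \;\leq\; \sum_{i,j}a_{ij}^{2}.
\]
Since $x$ was arbitrary, the inequality holds pointwise on $M$.

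I do not expect any serious obstacle here: the only conceptual point is to recognize $\Def\theta$ as the symmetric part of $\overline{\nabla}\theta$ (immediate from \eqref{defforDefform}) and to use that $\overline{g}$ is precisely the Hilbert--Schmidt inner product in an orthonormal frame. No curvature assumptions or global analysis are required, so the result is valid on a general Riemannian manifold as stated.
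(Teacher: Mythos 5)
Your proof is correct and follows essentially the same route as the paper's: both reduce the claim to the pointwise linear-algebra fact that the Frobenius norm of the symmetrization of a matrix is at most the Frobenius norm of the matrix, computed in an orthonormal basis via the elementary inequality $2a_{ij}a_{ji}\leq a_{ij}^{2}+a_{ji}^{2}$. The only cosmetic difference is that the paper works in geodesic normal coordinates at the point (so the Christoffel symbols vanish there and $\theta_{j;k}=\partial_{k}\theta_{j}$), whereas you use a general local orthonormal frame and the frame components of $\overline{\nabla}\theta$ directly.
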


\begin{proof}
Let $p\in M$, and consider the \emph{geodesic normal coordinates} $(x^{1}, x^{2}, ..., x^{n})$ \emph{about the point $p$}, so that the natural frame $\partial_{1}|_{p} , \partial_{2}|_{p} , ... ,\partial_{n}|_{p}$ (induced by the geodesic normal coordinates) \emph{at the point $p$} is orthonormal, and that the Christoffel symbols $\Gamma_{jk}^{l}$ (induced by the geodesic normal coordinate) vanish \emph{at the point $p$}. Hence, for any $1$-form $\theta = \sum_{j} \theta_{j} dx^{j}$ we have $\theta_{j;k}(p) = \partial_{k}\theta_{j}(p)$. So, by \eqref{definc} it follows
\begin{equation*}
\begin{split}
\overline{g}(  Def \theta  , Def \theta  )|_{p} & = \frac{1}{4} \overline{g} ( \sum_{i,j} (\partial_{i}\theta_{j} + \partial_{j}\theta_{i} )(p) dx^{i}\otimes dx^{j}|_{p} ,
 \sum_{k,l} (\partial_{k}\theta_{l} + \partial_{l}\theta_{k} )(p) dx^{k}\otimes dx^{l}|_{p} ) \\
& = \frac{1}{4}   \sum_{i,j} \sum_{k,l}   (\partial_{i}\theta_{j} + \partial_{j}\theta_{i} )(p)(\partial_{k}\theta_{l} + \partial_{l}\theta_{k} )(p) \delta^{ik}\delta^{jl} \\
& = \frac{1}{4} \sum_{i,j} (\partial_{i}\theta_{j} + \partial_{j}\theta_{i} )(p)(\partial_{i}\theta_{j} + \partial_{j}\theta_{i} )(p)\\
& \leq \frac{1}{2} \sum_{i,j} (\partial_{i}\theta_{j}(p))^{2} + (\partial_{j}\theta_{i}(p))^{2} \\
& = \sum_{i,j} (\partial_{i}\theta_{j}(p))^{2}.
\end{split}
\end{equation*}

On the other hand, by \eqref{gamma0} the Christoffel symbols $\Gamma_{jk}^{l}$ vanish at $p$, so it follows that $\overline{\theta}|_{p} = \sum_{i,j} \partial_{i}\theta_{j}(p) dx^{i}\otimes dx^{j}|_{p}$.  Hence 

\begin{equation*}
\overline{g}(  \overline{\nabla} \theta  , \overline{\nabla} \theta  )|_{p} =  \sum_{i,j} (\partial_{i}\theta_{j}(p))^{2},
\end{equation*}
and
\begin{equation*}
\overline{g}(  Def \theta  , Def \theta  )|_{p} \leq \sum_{i,j} (\partial_{i}\theta_{j}(p))^{2} = \overline{g}(  \overline{\nabla} \theta  , \overline{\nabla} \theta  )|_{p}.
\end{equation*}
Since $p \in M$ is arbitrary in the above argument, it follows that the above inequality is valid for all points in $M$ as needed.
 
\end{proof}

\begin{prop}\label{finitedissipation}
Let $a, b>0$ and and such that $\frac{1}{2}b < a \leq b$.  Let $M$ be a simply connected, complete $2$-dimensional Riemannian manifold with sectional curvature $-b^{2} \leq K_{M} \leq -a^{2}$. Let $\phi \in C^{1}(S(\infty))$ be any given boundary data, and let $F \in C^{\infty}(M)\cap C^{0}(\overline{M})$ be the unique bounded harmonic function on $\overline{M}   $ with 
$F|_{S(\infty)}= \phi$. Then the following holds
\begin{equation}\label{finitedissipationeq}
\int_{M}  \overline{g} (\overline{\nabla}(dF) , \overline{\nabla}(dF) ) = -\int_{M} Ric (\nabla F, \nabla F)  \leq b^{2} \int_{M} |\nabla F|^{2}.
\end{equation}

\end{prop}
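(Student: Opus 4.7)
The plan is to apply formula \eqref{important} with $\Delta F = 0$ to obtain a pointwise Bochner-type identity relating $\Delta|\nabla F|^{2}$ to the ``Hessian-squared'' term and the Ricci term, then integrate this identity over $M$ and use Proposition \ref{L1finite} to kill the $\int_M \Delta|\nabla F|^{2}$ contribution.

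First I would verify the pointwise identity. At any $p\in M$, working in geodesic normal coordinates at $p$, the Christoffel symbols vanish by \eqref{gamma0} and $\{dx^{i}\otimes dx^{j}|_p\}$ is $\overline{g}$-orthonormal, so exactly as in the proof of Lemma \ref{Deflessnabla} one has $\sum_{i,j}(\partial_{i}\partial_{j}F)^{2}(p) = \overline{g}(\overline{\nabla}(dF),\overline{\nabla}(dF))(p)$. Since $F$ is harmonic, formula \eqref{important} then collapses pointwise to
\begin{equation}\label{key5p2}
\Delta|\nabla F|^{2} = 2\,\overline{g}(\overline{\nabla}(dF),\overline{\nabla}(dF)) + 2\Ric(\nabla F, \nabla F).
\end{equation}

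Next, fix a base point $O\in M$ and, for each $R>0$, apply Stokes' theorem on the geodesic ball $B_{O}(R)$ to express $\int_{B_{O}(R)}\Delta|\nabla F|^{2}\,dV$ as the flux of $\nabla|\nabla F|^{2}$ across $\partial B_{O}(R)$. By Proposition \ref{L1finite} together with Fubini applied in geodesic polar coordinates at $O$,
\begin{equation*}
\int_{0}^{\infty}\int_{\partial B_{O}(r)} \bigl|\nabla|\nabla F|^{2}\bigr|\,d\sigma\,dr \;=\; \int_{M}\bigl|\nabla|\nabla F|^{2}\bigr|\,dV \;<\; \infty,
\end{equation*}
so I can extract an increasing sequence $R_{k}\to\infty$ along which the surface integrals tend to $0$. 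Consequently $\int_{B_{O}(R_{k})}\Delta|\nabla F|^{2}\,dV \to 0$.

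Integrating \eqref{key5p2} over $B_{O}(R_{k})$ and passing to the limit then finishes the argument. The Ricci integral converges to $\int_{M}\Ric(\nabla F, \nabla F)\,dV$ by dominated convergence, using $|\Ric(\nabla F,\nabla F)|\leq b^{2}|\nabla F|^{2}$ (which is \eqref{ricci1} in dimension $2$, together with $K_M\geq -b^{2}$) and $|\nabla F|^{2}\in L^{1}(M)$ from Corollary \ref{L2M}; the remaining integral of $\overline{g}(\overline{\nabla}(dF),\overline{\nabla}(dF))$ converges by monotone convergence because the integrand is non-negative. The resulting equality is
\begin{equation*}
\int_{M}\overline{g}(\overline{\nabla}(dF),\overline{\nabla}(dF))\,dV \;=\; -\int_{M}\Ric(\nabla F, \nabla F)\,dV,
\end{equation*}
and the stated inequality follows from $\Ric\geq -b^{2}g$. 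The only nontrivial step is extracting the exhausting sequence $R_{k}$ along which the boundary flux vanishes; this is precisely the payoff of Proposition \ref{L1finite}, and it is the step that rigidly ties us to the two-dimensional setting with $\frac{b}{2}<a\leq b$. Everything else is the Bochner identity specialized to harmonic $F$ plus two standard convergence theorems.
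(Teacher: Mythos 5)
Your proposal is correct and follows essentially the same route as the paper: both rest on the pointwise Bochner identity \eqref{important} specialized to harmonic $F$, and both invoke Proposition \ref{L1finite} as the sole ingredient needed to conclude $\int_{M}\Delta|\nabla F|^{2}=0$ before splitting the remaining two integrals. The only divergence is technical --- the paper kills the divergence term with smooth cutoffs $\psi_{k}(\rho^{2})$ whose gradients are controlled by $\int_{M}|\nabla|\nabla F|^{2}|<\infty$ (and uses monotone rather than dominated convergence for the Ricci term, exploiting $-\Ric(\nabla F,\nabla F)\geq 0$), whereas you exhaust by geodesic balls and use the coarea formula to extract radii $R_{k}$ along which the boundary flux vanishes; both devices work.
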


\begin{proof}
Let $F \in C^{\infty}(M)\cap C^{0}(\overline{M})$ be the unique bounded harmonic function on $M$ with $F|_{S(\infty)}= \phi$, where $\phi \in C^{1}(S(\infty))$ is some given boundary data. 

Then again by \eqref{important}
\begin{equation}\label{importantb}
\Delta ( |\nabla F|^{2})(x) = 2 \sum_{1 \leq i,j \leq 2} [\partial_{i}\partial_{j}F]^{2}(x) + 2\sum_{1 \leq i,j \leq 2}\partial_{i}F(x) \partial_{i} (\Delta F)(x) + 2 \Ric (\nabla F , \nabla F)(x) , 
\end{equation}
where $\partial_{1}, \partial_{2} $ is the natural coordinate frame induced by the geodesic normal coordinates \emph{about the point  $x$.} 
So again by \eqref{gamma0} Christoffel symbols $\Gamma_{jk}^{l}$ vanish at $x$, and we have 
$$\sum_{1 \leq i,j \leq 2} [\partial_{i}\partial_{j}F]^{2}(x) =  \overline{g} (\overline{\nabla}(dF) , \overline{\nabla}(dF) )|_{x}.$$  
Also, as before we use $\Delta F=0$ in \eqref{importantb} to obtain 
\begin{equation*}
\frac{1}{2} \Delta (|\nabla F|^{2})(x) =  \overline{g} (\overline{\nabla}(dF) , \overline{\nabla}(dF) )(x) + \Ric (\nabla F, \nabla F) (x) \quad \forall x \in  M. 
\end{equation*}

Now, for each positive integer $k \geq 1$, consider a smooth function $\psi_{k} : [0,\infty ) \rightarrow \mathbb{R}$, which satisfies $\chi_{[0,2^{k}]} \leq \psi_{k} \leq \chi_{[0,2^{k+1}]}$, and $|\psi_{k}'| \leq \frac{2}{2^{k}}$. Now, let $O$ be a selected base point in $M$, and let $\rho$ be the distance function from $O$. Then, by multiplying the above equality by the cut off function $\psi_{k}(\rho^{2})$ (which is compactly supported in $B_{O}(3k)$) and integrating over $M$, we yield the following equality
\begin{equation}\label{importantidea}
\int_{M} \frac{1}{2} div (\nabla |\nabla F|^{2}) \psi_{k}(\rho^{2}) = \int_{M}  \psi_{k}(\rho^{2})\overline{g} (\overline{\nabla}(dF) , \overline{\nabla}(dF) ) 
+ \int_{M} \psi_{k}(\rho^{2}) \Ric( \nabla F, \nabla F) .
\end{equation}
But since  $\psi_{k}(\rho^{2})$ is compactly supported in $B_{O}(3R)$, it is plain to see that, for every $k \geq 1$, we have
\begin{equation}
\begin{split}
 | \int_{M} \frac{1}{2} div (\nabla |\nabla F|^{2}) \psi_{k}(\rho^{2}) | 
&= \frac{1}{2} | \int_{M} 2\psi_{k}'(\rho^{2}) \nabla \rho \cdot \nabla (|\nabla F|^{2})  | \\
& \leq \frac{2}{2^{k}} \int_{M} |\nabla (|\nabla F|^{2})|. 
\end{split}
\end{equation}
Since, according to Proposition \ref{L1finite}, we have $ \int_{M} |\nabla (|\nabla F|^{2})| < \infty $, it follows from the above inequality that 
\begin{equation}
\lim_{k\rightarrow \infty}  \int_{M} \frac{1}{2} div (\nabla |\nabla F|^{2}) \psi_{k}(\rho^{2}) = 0.
\end{equation}

On the other hand, by the monotone convergence theorem, we have
\begin{equation}
\lim_{k\rightarrow \infty}  \int_{M} \psi_{k}(\rho^{2})\overline{g} (\overline{\nabla}(dF) , \overline{\nabla}(dF) ) = \int_{M}\overline{g} (\overline{\nabla}(dF) , \overline{\nabla}(dF) ),
\end{equation}
and that 

\begin{equation}
\lim_{k\rightarrow \infty}  \int_{M} \psi_{k}(\rho^{2})\cdot (-\Ric (\nabla F, \nabla F)) = \int_{M} (-\Ric (\nabla F, \nabla F))
\end{equation}

As a result, by taking the limit of each side in equality (\ref{importantidea}), we get

\begin{equation}
0 =  \int_{M}\overline{g} (\overline{\nabla}(dF) , \overline{\nabla}(dF) ) + \int_{M} \Ric (\nabla F, \nabla F) .
\end{equation}
 That is, we have 

\begin{equation*}
\int_{M}\overline{g} (\overline{\nabla}(dF) , \overline{\nabla}(dF) ) =  -\int_{M} Ric (\nabla F, \nabla F) \leq b^{2}\int_{M} |\nabla F|^{2} , 
\end{equation*}
in which the last inequality follows from the fact \eqref{ricci1} that $\Ric(\nabla F , \nabla F) = K_{M} |\nabla F|^{2} \geq -b^{2} |\nabla F|^{2}$.
\end{proof}

\begin{cor}\label{finitedissipationonH}
Let $a>0$ and let $\phi \in C^{1}(S(\infty))$ be any given boundary data, and let $F \in C^{\infty}(\H)\cap C^{0}(\overline{\H})$ be the unique bounded harmonic function on $\overline{\H}   $ with 
$F|_{S(\infty)}= \phi$. Then the following holds
\begin{equation}
\int_{\H}  \overline{g} (\overline{\nabla}(dF) , \overline{\nabla}(dF) ) = a^{2}\int_{\H} \abs{\nabla F}^2 . 
\end{equation}

\end{cor}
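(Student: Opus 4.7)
The plan is to apply Proposition \ref{finitedissipation} with the choice $b = a$, which is permitted since the constraint $\tfrac{1}{2}b < a \leq b$ is satisfied (as equality on the right) when $b = a$. The space form $\H$ is a simply connected, complete, $2$-dimensional Riemannian manifold whose sectional curvature is identically $-a^2$, so in particular it satisfies $-a^2 \leq K_M \leq -a^2$, which is the hypothesis $-b^2 \leq K_M \leq -a^2$ in that proposition. Therefore Proposition \ref{finitedissipation} gives
\[
\int_{\H} \overline{g}(\overline{\nabla}(dF), \overline{\nabla}(dF)) = -\int_{\H} \Ric(\nabla F, \nabla F).
\]

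Next I would upgrade the inequality at the end of Proposition \ref{finitedissipation} to an equality by exploiting the constancy of the sectional curvature. Since $\dim \H = 2$, identity \eqref{ricci1} reads $\Ric_p(X,X) = K_M(p)\,|X|^2$ for every $X$, and on $\H$ we have $K_M \equiv -a^2$. Applied pointwise to $X = \nabla F$ this yields
\[
\Ric(\nabla F, \nabla F)(x) = -a^2 \, |\nabla F|^2(x), \qquad x \in \H.
\]

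Substituting this exact identity back into the integral formula gives
\[
\int_{\H} \overline{g}(\overline{\nabla}(dF), \overline{\nabla}(dF)) = -\int_{\H} \bigl(-a^2 |\nabla F|^2\bigr) = a^2 \int_{\H} |\nabla F|^2,
\]
which is the desired equality. There is essentially no obstacle here: the whole content of the corollary is that in a space form everything that was an inequality (coming from the lower curvature bound $-b^2$) becomes an equality, and the finiteness of all the integrals involved has already been guaranteed by Corollary \ref{L2spaceform} together with Proposition \ref{L1finite}, both of which are invoked implicitly through the hypotheses of Proposition \ref{finitedissipation} with $b=a$.
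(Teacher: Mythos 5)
Your proposal is correct and is essentially the paper's own argument: the paper also deduces the corollary by specializing Proposition \ref{finitedissipation} to $\H$ (where $b=a$ is admissible) and then replacing the Ricci inequality with the exact identity $\Ric(\nabla F,\nabla F)=-a^{2}\abs{\nabla F}^{2}$ valid on the space form. No further comment is needed.
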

\begin{proof}
This is immediate from equation \eqref{finitedissipationeq} since $ \Ric (\nabla F, \nabla F)=-a^{2}\abs{\nabla F}^2.$ 
\end{proof}
\section{Proofs of  the main results}\label{proofs}
First we establish the following lemma.  A simpler computation in normal coordinates could also be done in the same spirit as the computation in the Euclidean space.  However, we present a different proof below due to its intrinsic nature.
\begin{lemma}\label{convection} The following identity is valid for any smooth function $f$ on any given $n$-dim Riemannian manifold $M$.
\begin{equation}
\overline{\nabla}_{\nabla f}df = \frac 12 d\abs{df}^{2}.
\end{equation}
\end{lemma}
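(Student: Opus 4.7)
The plan is to give an intrinsic, coordinate-free proof using only the two defining features of the Levi-Civita connection: compatibility with the metric $g$, and symmetry (torsion-freeness), which manifests as the symmetry of the Hessian $\overline{\nabla}(df)$ viewed as a $(0,2)$-tensor.

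First I would test the identity on an arbitrary smooth vector field $Y$. By \eqref{samenorm}, $|df|^{2}=g(\nabla f,\nabla f)$, so that
\begin{equation*}
\tfrac{1}{2}\,d|df|^{2}(Y) \;=\; \tfrac{1}{2}\,Y\!\bigl(g(\nabla f,\nabla f)\bigr) \;=\; g\bigl(\overline{\nabla}_{Y}\nabla f,\nabla f\bigr),
\end{equation*}
where the last equality is just metric-compatibility of $\overline{\nabla}$. Using the identification $X\leftrightarrow X^{\ast}$ from \eqref{lower} together with the compatibility relation \eqref{samenotation}, I rewrite this as
\begin{equation*}
g\bigl(\overline{\nabla}_{Y}\nabla f,\nabla f\bigr) \;=\; \bigl(\overline{\nabla}_{Y}\nabla f\bigr)^{\ast}(\nabla f) \;=\; \bigl(\overline{\nabla}_{Y}df\bigr)(\nabla f),
\end{equation*}
where in the last step I used $(\nabla f)^{\ast}=df$ from \eqref{gradFdF}.

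The remaining step is the key one: I invoke the symmetry of the Hessian of $f$, namely that $(\overline{\nabla}_{Y}df)(Z)=(\overline{\nabla}_{Z}df)(Y)$ for all smooth vector fields $Y,Z$. This is a standard consequence of the torsion-freeness of the Levi-Civita connection (it can be verified by unwinding $(\overline{\nabla}_{Y}df)(Z)=Y(Zf)-(\overline{\nabla}_{Y}Z)f$ and using $\overline{\nabla}_{Y}Z-\overline{\nabla}_{Z}Y=[Y,Z]$). Applied with $Z=\nabla f$, it yields
\begin{equation*}
\bigl(\overline{\nabla}_{Y}df\bigr)(\nabla f) \;=\; \bigl(\overline{\nabla}_{\nabla f}df\bigr)(Y).
\end{equation*}
Chaining these equalities gives $\tfrac{1}{2}d|df|^{2}(Y) = (\overline{\nabla}_{\nabla f}df)(Y)$ for every $Y$, which is the claimed identity.

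There is no real obstacle; the only thing that requires a moment of care is keeping straight the musical isomorphism between $TM$ and $T^{\ast}M$ so that each step is truly intrinsic and does not secretly rely on a choice of coordinates. A coordinate verification in geodesic normal coordinates at a point (where the Christoffel symbols vanish by \eqref{gamma0}) would also work and reduce everything to ordinary partial derivatives; the authors note they prefer the intrinsic version, and the outline above provides precisely that.
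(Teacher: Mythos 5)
Your proof is correct, and it rests on exactly the same two pillars as the paper's argument: metric compatibility and torsion-freeness of the Levi-Civita connection, tested against an arbitrary vector field. The organization differs, though. The paper stays entirely on the vector-field side: it expands $g(X,\overline{\nabla}_{W}W)$ (with $W=\nabla f$) by compatibility in the $W$-direction, trades $\overline{\nabla}_{W}X$ for $\overline{\nabla}_{X}W$ plus the Lie bracket $[X,W]$, and then unwinds everything using the identity $W(f)=|W|^{2}$. You instead move to the form side early and package the torsion-freeness once and for all into the symmetry of the Hessian $\overline{\nabla}(df)$ as a $(0,2)$-tensor, applied with $Z=\nabla f$. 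The two computations are logically equivalent, but your decomposition is shorter and isolates a reusable standard fact (the symmetric Hessian), whereas the paper's version makes the role of the bracket identity $[X,W](f)=X(W(f))-W(X(f))$ and of $W(f)=|W|^{2}$ explicit. The only point worth a moment of care in your version is that the paper defines the induced connection on $T^{*}M$ by \eqref{samenotation} (commuting with the musical isomorphism) rather than by the Leibniz rule $(\overline{\nabla}_{Y}df)(Z)=Y(Zf)-(\overline{\nabla}_{Y}Z)f$; these agree precisely because the connection is metric-compatible, which you are already invoking, so there is no gap. Either route is a complete proof of Lemma \ref{convection}.
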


\begin{proof}
First, for any smooth vector field $X$ on a Riemannian manifold $M$, and for any smooth function $f$ on $M$, we write  $$X(f) = \langle X , df \rangle_{TM\otimes T^{*}M}.$$ Next, recall that the Lie bracket $[X,Y]$ between two vector fields $X$ and $Y$, is itself another vector field, and is characterized by $[X,Y](f) = X(Y(f)) - Y(X(f))$, for any $f \in C^{\infty}(M)$.
\newline\indent
Now, for a given smooth function $f$ on a Riemannian manifold $M$, we consider the gradient field $W = \nabla f$, which means that $W^{*} = df$. Then, by \eqref{lower} it is plain to see that 
\be\label{eq6b}
W(f) = \langle W , df \rangle_{TM\otimes T^{*}M} = g(W , W) = |W|^{2}.
\ee  
Next, we have the following identity for any smooth vector field $X$ on $M$, due to the fact that the Levi-Civita connection $\overline{\nabla}$ on $TM$
is compatible with the Riemannian metric $g(\cdot , \cdot )$ on $M$ we have
\begin{equation}\label{eq6a}
\frac{1}{2} X(|W|^{2}) =\frac{1}{2} \langle X , d(|W|^{2}) \rangle_{TM\otimes T^{*}M} = g(\overline{\nabla}_{X}W , W),
\end{equation}
and
%On the other hand, the fact that $\overline{\nabla}$ is compatible with the metric $g(\cdot, \cdot )$ also gives 
\begin{align}
\langle X , \overline{\nabla}_{W}W^{*} \rangle_{TM\otimes T^{*}M} & = \langle X , (\overline{\nabla}_{W}W)^{*} \rangle_{TM\otimes T^{*}M}\quad\mbox{by \eqref{samenotation}}\nonumber\\
& = g(X, \overline{\nabla}_{W}W ) \quad\mbox{by \eqref{lower}}\nonumber\\
& = W( g(X,W) ) - g(\overline{\nabla}_{W}X,W ) \quad\mbox{by compatibility}\nonumber\\
& = W( \langle X , df\rangle_{TM\otimes T^{*}M}       ) - g(\overline{\nabla}_{W}X,W )\nonumber \\
& = W(X(f)) - g(\overline{\nabla}_{W}X,W ).\label{lastline}
\end{align}
But due to the torsion free property of the Levi-Civita connection $\overline{\nabla}$ on $TM$, which says that $-\overline{\nabla}_{W}X = [X,W] - \overline{\nabla}_{X}W$, we have
\[
-g(\overline{\nabla}_{W}X,W )=g([X,W], W)-g(\overline{\nabla}_{X}W,W).
\]
Hence by \eqref{lastline} 
\begin{equation*}
\begin{split}
\langle X , \overline{\nabla}_{W}W^{*} \rangle_{TM\otimes T^{*}M} &= W(X(f)) + g([X,W] , W) - g(\overline{\nabla}_{X}W,W) \\
& = W(X(f)) + [X,W](f) -  \frac{1}{2} X(|W|^{2})\quad\mbox{by \eqref{lower} and \eqref{eq6a}}\\
& = W(X(f)) + X(W(f)) - W(X(f)) -  \frac{1}{2} X(|W|^{2}) \\
& = \frac{1}{2} X(|W|^{2}),
\end{split}
\end{equation*}
where the last equality follows since $X(W(f)) = X(|W|^{2})$ by \eqref{eq6b}.   In conclusion, by using \eqref{eq6a} again, the following equality holds for any smooth vector field $X$ on $M$
\begin{equation*}
 <X , \overline{\nabla}_{W}W^{*} - \frac{1}{2}d(|W|^{2}) >_{TM\otimes T^{*}M} = 0 ,
\end{equation*}
which means the same as saying that $\overline{\nabla}_{\nabla f}df - \frac{1}{2}d(|df|^{2}) = 0$ as needed.
\end{proof}

\subsection{Proof of Theorem \ref{mainthm}.}  First we show existence and the lack of uniqueness.\\
{\textbf {Existence and Non-uniqueness:} }
For convenience we recall the Navier-Stokes equation on $\H$.
\begin{equation}\tag{$\NS{\H}$}
\begin{split}
\partial_{t}U^{*} - \Delta U^{*}  + \overline{\nabla}_{U}U^{*} - 2\Ric(U^{\ast})+ dP &= 0.\\
d^{\ast}U^{\ast}&=0
\end{split}
\end{equation}

Now, let $\phi \in C^{0}(S(\infty))$, then by Theorem \ref{thmharmonic} there exists a (unique) harmonic function $F\in C^{\infty}(\H)\cap C^{0}(\bar{\H})$ satisfying $F\big|_{S(\infty)}=\phi$.  We let our initial data $u_{0}=dF$, and define a solution $(U^{\ast},P)$ to be 
\be\label{sol1}
U^{\ast}=\psi(t)dF\quad P=-\partial_{t}\psi(t)F-\frac 12 \abs{dF}^{2}+2a^{2}F
\ee
where $\psi$ is any function satisfying
\be\label{psi}
\psi^{2}(t) + 2{a^{2}}\int_{0}^{t} \psi^{2}(s) ds \leq \psi^{2}(0).
\ee
%\be
%\psi(t) + \int_{0}^{t} \psi(s) ds \leq \psi(0).
%\ee
For example, we could let $\psi(t) = \exp(-\frac{At}{2})$ for some $A \geq 2a^{2}$.
\newline\indent
First we show $(U^{\ast},P)$ solves $\NS\H$.  This is very easy by the preparations we have done in Section \ref{hodge}.  Indeed, by \eqref{simple} $\Delta U^{\ast}=0$, and we observe that by Lemma \ref{convection} and \eqref{ricci2}, $$\partial_{t}U^{*}  + \overline{\nabla}_{U}U^{*} + 2\Ric(U^{\ast})=-dP.$$
\indent
It is also very easy to see that $U^{\ast}$ is divergence free since  by definition of $U^{\ast}$ and \eqref{DF} this is equivalent to $F$ being harmonic.
\newline\noindent
{\textbf {Global energy inequality:} }
Recall we want to show
\[
\int_{\H}\abs{U^{\ast}(t,x)}^{2}+2\int^{t}_{0}\int_{\H}\overline{g}(  \Def U^{\ast},\Def U^{\ast}  ) (s,x)ds\leq \int_{\H}\abs{u_{0}}^{2}.
\]
Thanks to Lemma \ref{Deflessnabla} we have
\begin{align*}
&\int_{\H}\abs{U^{\ast}(t,x)}^{2}+2\int^{t}_{0}\int_{\H}\overline{g}(  \Def U^{\ast},\Def U^{\ast}  ) (s,x)ds\\
&\qquad\qquad\leq \int_{\H}\abs{U^{\ast}(t,x)}^{2}+2\int^{t}_{0}\int_{\H} \overline{g}(  \overline{\nabla} U^{\ast}  , \overline{\nabla}U^{\ast}  ) (s,x) ds
\end{align*}
%\[
%\int_{\H}\abs{U^{\ast}(t,x)}^{2}+\int^{t}_{0}\int_{\H}\abs{\nabla U^{\ast}}^{2}(t,x)\leq \int_{\H}\abs{u_{0}}^{2}.
%\]
By Corollary \ref{finitedissipationonH}
\begin{align*}
\int_{\H} \overline{g}(\overline{\nabla}(dF) , \overline{\nabla}(dF)) = a^{2}\int_{\H}\abs{\nabla F(t,x)}^{2} ,
\end{align*}
so
\begin{align*}
\int_{\H}  \overline{g}(  \overline{\nabla} U^{\ast}  , \overline{\nabla}U^{\ast}  )    =a^{2}\int_{\H}{\psi}^{2}(t)\abs{\nabla F(t,x)}^{2}.
\end{align*}
Hence by \eqref{samenorm} and \eqref{psi}
\begin{align*}
&\int_{\H}\abs{U^{\ast}(t,x)}^{2}+2\int^{t}_{0}\int_{\H} \overline{g}(  \overline{\nabla} U^{\ast}  , \overline{\nabla}U^{\ast}  )ds\\
&\qquad\qquad=\int_{\H}\psi^{2}(t)\abs{\nabla F (x)}^{2}+ 2{a^{2}}\int^{t}_{0}\int_{\H}{\psi}^{2}(s)\abs{\nabla F (x)}^{2}ds\\
&\qquad\qquad=\left(\psi^{2}(t)+ 2{a^{2}}\int^{t}_{0}\psi^{2}(s)ds\right)\int_{\H}\abs{\nabla F (x)}^{2}\\
&\qquad\qquad\leq \psi^{2}(0)\int_{\H}\abs{\nabla F (x)}^{2}\\
&\qquad\qquad= \int_{\H}\abs{u_{0}}^{2},
\end{align*}
as needed.
\subsection{Proof of Theorem \ref{thm2}}
The proof is very similar.  Therefore we just give a brief sketch.\\
{\textbf {Existence and Non-uniqueness:} }
Again, for convenience, we recall the {\emph{modified}} Navier-Stokes equation on $M$.
\begin{equation}\tag{$\NS{M}$}
\begin{split}
\partial_{t}U^{*} - \Delta U^{*}  + \overline{\nabla}_{U}U^{*} + dP &= 0,\\
d^{\ast}U^{\ast}&=0.
\end{split}
\end{equation}
Let $u_{0}=dF$, and
\be\label{sol2}
U^{\ast}=\psi(t)dF\quad P=-\partial_{t}\psi(t)F-\frac 12 \abs{dF}^{2} 
\ee
with $\psi(t) = \exp(-\frac{At}{2})$ for some $A > 2b^{2}$..   Then as before we can see the equation is satisfied.
\newline\noindent
{\textbf {Global energy inequality:} }
 By \ref{Deflessnabla} we need to establish
\[
\int_{M}\abs{U^{\ast}(t,x)}^{2}+2\int^{t}_{0}\int_{M}  \overline{g}(  \overline{\nabla} U^{\ast}  , \overline{\nabla}U^{\ast}  )ds \leq \int_{M}\abs{u_{0}}^{2}.
\]
By Proposition \ref{finitedissipation}
\begin{align*}
\int_{M}  \overline{g}(  \overline{\nabla} (dF)  , \overline{\nabla}(dF)  ) \leq b^{2}\int_{M}\abs{\nabla F}^{2} ,
\end{align*}
so
\begin{align*}
\int_{M}   \overline{g}(  \overline{\nabla} U^{\ast}  , \overline{\nabla}U^{\ast}  ) \leq b^{2}\int_{M}{\psi}^{2}(t)\abs{\nabla F(t,x)}^{2} ,
\end{align*}
Hence  
\begin{align*}
&\int_{M}\abs{U^{\ast}(t,x)}^{2}+2\int^{t}_{0}\int_{M}  \overline{g}(  \overline{\nabla} U^{\ast}  , \overline{\nabla}U^{\ast}  )  ds\\
&\qquad\qquad\leq\int_{M}\psi^{2}(t)\abs{\nabla F (x)}^{2}+ 2{b^{2}}\int^{t}_{0}\int_{M}\abs{\psi(s)\nabla F (x)}^{2}ds\\
&\qquad\qquad=\left(\psi^{2}(t)+ 2{b^{2}}\int^{t}_{0}\psi^{2}(s)ds\right)\int_{M}\abs{\nabla F (x)}^{2}\\
&\qquad\qquad\leq \psi^{2}(0)\int_{M}\abs{\nabla F (x)}^{2}\\
&\qquad\qquad= \int_{M}\abs{u_{0}}^{2},
\end{align*}
as needed.

\subsection{Proof of Corollary \ref{cor1} and \ref{cor2}}\label{corproofs}
In the recent paper \cite{KNSS}, Koch, Nadirashvili, Seregin and \v{S}ver\'ak studied Liouville thorems and their consequences for the Navier-Stokes equations.  One of the results is
\begin{nonumthm}\cite{KNSS}
Let $u$ be a bounded weak solution of the Navier-Stokes equations on $\R^{2}\times(-\infty,0)$.  Then $u(x,t)=b(t)$ for a suitable bounded measurable function $b:(\infty,0)\rightarrow \R^{2}$.
\end{nonumthm}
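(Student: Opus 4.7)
The plan is to reduce the problem to a Liouville theorem for a scalar parabolic equation via the two-dimensional vorticity formulation. Let $\omega = \partial_{1}u_{2} - \partial_{2}u_{1}$ be the scalar vorticity, which in two dimensions satisfies the drift-diffusion equation
\begin{equation*}
\partial_{t}\omega - \Delta \omega + u\cdot \nabla \omega = 0 \quad \text{on } \R^{2}\times(-\infty,0).
\end{equation*}
The first step is a regularity bootstrap: since $u$ is bounded, standard interior parabolic regularity for the Navier--Stokes system (applied on backward parabolic cylinders, using Biot--Savart to reconstruct the pressure and exploiting the smoothing of the heat semigroup) yields uniform bounds on every spatial and temporal derivative of $u$ on $\R^{2}\times(-\infty,0)$. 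In particular $\omega$ is smooth and uniformly bounded.

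Next I would apply a parabolic Liouville theorem to $\omega$. Because the drift $u$ is bounded, the De Giorgi--Nash--Moser theory furnishes a scale-invariant H\"older estimate for $\omega$. A rescaling and compactness argument (using the bootstrapped derivative bounds on $u$ to obtain a well-behaved limit of the drift) then forces any bounded ancient solution of the drift-diffusion equation to be spatially constant for each fixed $t$; that is, $\omega(x,t) = c(t)$ for some bounded measurable function $c$.

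To finish, I would pin down $u$ from the spatial constancy of its curl. If $\partial_{1}u_{2} - \partial_{2}u_{1} = c(t)$ for every $x$, then $u(\cdot,t) + \tfrac{c(t)}{2}(-x_{2}, x_{1})$ is both curl-free and divergence-free on $\R^{2}$; but the linearly growing rotational correction is incompatible with the boundedness of $u$ unless $c(t) \equiv 0$. Once $c \equiv 0$, the field $u(\cdot, t)$ is simultaneously curl-free and divergence-free, so $u_{1} - i u_{2}$ is a bounded entire holomorphic function on $\mathbb{C}$, hence constant in $x$ by the classical Liouville theorem. This yields $u(x,t) = b(t)$ as required.

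The main obstacle is the parabolic Liouville step. Bounded ancient solutions of a drift-diffusion equation with merely bounded drift on $\R^{2}\times(-\infty,0)$ need not be spatially constant in general, so one must make essential use of the extra structure that the bootstrap provides uniformly bounded derivatives of $u$ of every order, giving enough compactness in the rescaling to extract a limit and to identify it as depending on $t$ alone. In the actual KNSS argument this is handled through refined gradient estimates on the stream function, which streamline the above scheme but are in the same spirit.
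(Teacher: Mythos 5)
First, a point of orientation: the paper does not prove this statement. It is quoted from \cite{KNSS} in Section \ref{corproofs} purely as motivation for Corollaries \ref{cor1} and \ref{cor2}, so there is no in-paper proof to measure your argument against; the comparison below is with the argument of Koch, Nadirashvili, Seregin and \v{S}ver\'ak themselves.

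Your architecture (pass to the vorticity $\omega = \partial_{1}u_{2}-\partial_{2}u_{1}$, bootstrap smoothness and uniform derivative bounds from the boundedness of $u$, prove a Liouville statement for the scalar drift--diffusion equation, then recover $u$) is the right skeleton, and your endgame --- ruling out $\omega = c(t)\not\equiv 0$ by the linear growth of the rotational correction and then applying the classical Liouville theorem to the bounded holomorphic function $u_{1}-iu_{2}$ --- is fine. The genuine gap is the central step, and you have in effect diagnosed it yourself: bounded ancient solutions of $\partial_{t}\omega-\Delta\omega+b\cdot\nabla\omega=0$ with bounded \emph{smooth} drift need not be spatially constant (already in one space dimension, $b=-\tanh x$ admits the bounded nonconstant steady solution with $\omega'=\mathrm{sech}\,x$, and every derivative of $b$ is bounded), so De Giorgi--Nash--Moser H\"older estimates plus rescaling and compactness cannot close the argument: the limits you extract from the bootstrapped derivative bounds live in the same class of equations and hence admit the same counterexamples. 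The extra structure that must enter --- and that your proposal never actually uses --- is the constraint coupling $\omega$ back to $u$ through Stokes' theorem: $\int_{B_{R}}\omega\,dx=\oint_{\partial B_{R}}u\cdot\tau\,ds=O(R)$, whereas if $\omega$ remained within $\varepsilon$ of a positive supremum $M$ on large regions this integral would be of order $M\pi R^{2}$. The argument of \cite{KNSS} rests on exactly this: a translation/compactness argument combined with the parabolic Harnack inequality (whose constants are uniform precisely because the drift is bounded) applied to $M-\omega\geq 0$ shows that near-extremal values of $\omega$ spread over large backward space--time regions, and the circulation bound then forces $\sup\omega\leq 0\leq\inf\omega$, i.e.\ $\omega\equiv 0$ --- a stronger conclusion than your intermediate $\omega=c(t)$. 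Without an input of this kind (your closing appeal to ``refined gradient estimates on the stream function'' is neither the mechanism actually used nor developed enough to substitute for one), the proposal does not constitute a proof.
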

The three dimensional problem is more difficult.  Nevertheless Koch, Nadirashvili, Seregin and \v{S}ver\'ak  are able to obtain corresponding results for the axi-symmetric equations with no swirl.   What Corollaries \ref{cor1} and \ref{cor2} show that in the hyperbolic setting we can have bounded solutions in both two and three dimensions (in fact, for any $n\geq2$) that are not functions of time only.  The nontrivial bounded solutions we choose are in the form of \eqref{sol1} and \eqref{sol2} for $\NS{H^{n}(-a^{2})}$ and \eqref{MNS} respectively, where we drop the condition \eqref{psi}, which is only needed to show the global energy inequality.  It would be interesting to find out whether or not these are the \emph{only} bounded solutions of $\NS{H^{n}(-a^{2})}$ and \eqref{MNS} .
\newline\indent
Here we also mention the result of Galdi \cite{Galdi}, which states
\begin{nonumthm}\cite{Galdi}
For the steady Navier-Stokes equation on $\R^{3}$ whenever the solution satisfies the finite dissipation property and $u\in L^{9/2}_{x}$, then $u$ must be a trivial solution, i.e. $u$ is constant.  
\end{nonumthm}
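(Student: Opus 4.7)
The plan is to derive a localized Bernoulli-type energy identity and use the $L^{9/2}$ integrability of $u$, together with a Calder\'on--Zygmund representation of the pressure, to show that all boundary contributions vanish as the localization scale tends to infinity. First I would dot the steady equation $-\Delta u + (u\cdot\nabla)u + \nabla P = 0$ with $u$ itself. Using $-u\cdot\Delta u = -\Delta(\tfrac12 |u|^2) + |\nabla u|^2$, the identity $u\cdot(u\cdot\nabla)u = (u\cdot\nabla)(\tfrac12|u|^2)$, and $u\cdot\nabla P = \dv(Pu)$ (valid because $\dv u = 0$), this yields the pointwise relation
\[
|\nabla u|^2 = \Delta\!\left(\tfrac{1}{2}|u|^2\right) - \dv\!\left(\left(\tfrac{1}{2}|u|^2 + P\right) u\right).
\]

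Next I would introduce a smooth radial cutoff $\phi_R$ with $\phi_R \equiv 1$ on $B_R$, $\supp \phi_R \subset B_{2R}$, $|\nabla \phi_R| \leq C/R$, and $|\Delta \phi_R| \leq C/R^2$. Multiplying the identity above by $\phi_R$ and integrating --- the manipulations are justified because $\nabla u \in L^2(\R^3)$ by the finite-dissipation hypothesis --- two rounds of integration by parts give
\[
\int_{\R^3} |\nabla u|^2 \phi_R = \int_{A_R} \tfrac{1}{2}|u|^2 \Delta \phi_R + \int_{A_R}\!\left(\tfrac{1}{2}|u|^2 + P\right) u\cdot \nabla \phi_R,
\]
where $A_R = B_{2R}\setminus B_R$. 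The goal is to show each term on the right vanishes as $R \to \infty$, so that monotone convergence on the left forces $\int_{\R^3}|\nabla u|^2 = 0$, whence $u$ is constant.

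For the purely inertial terms, H\"older's inequality with $u \in L^{9/2}(\R^3)$ and $|A_R| \leq C R^3$ reduces each contribution to a tail norm of $u$ on $A_R$ times an $R$-independent constant, so both vanish by the dominated convergence theorem. The main obstacle is the pressure term $\int_{A_R} P\, u\cdot \nabla \phi_R$, and this is where the bulk of the technical work lies. I would handle it by taking the divergence of the momentum equation to get $-\Delta P = \partial_i \partial_j(u_i u_j)$, and then representing $P$ canonically (up to an additive constant) as iterated Riesz transforms of $u\otimes u$. Since $u \in L^{9/2}$ implies $|u|^2 \in L^{9/4}$, the $L^p$-boundedness of Calder\'on--Zygmund operators on $\R^3$ yields $P \in L^{9/4}(\R^3)$. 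Pairing this with the H\"older triple $\tfrac{2}{9} + \tfrac{4}{9} + \tfrac{1}{3} = 1$ and noting that $\|\nabla \phi_R\|_{L^3(A_R)} \leq C$ uniformly in $R$, the pressure contribution is dominated by $C\,\|u\|_{L^{9/2}(A_R)} \|P\|_{L^{9/4}(A_R)}$, which tends to zero. The hardest step is thus the global $L^{9/4}$ control of the pressure, since it requires identifying the Riesz-transform representative with the pressure genuinely appearing in the equation --- once that identification is established, the rest of the argument is a clean Vitali-type tail estimate.
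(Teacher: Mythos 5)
The paper does not prove this statement: it is quoted verbatim from \cite{Galdi} in Section \ref{corproofs} purely as motivation for the Liouville discussion, so there is no in-paper argument to compare yours against. Judged on its own, your sketch reproduces the standard proof of Galdi's theorem and its skeleton is sound: the pointwise Bernoulli identity is correct, the cutoff computation is correct, and the H\"older bookkeeping checks out --- $\|\nabla\phi_R\|_{L^3(A_R)}\leq C$ uniformly, $\|\Delta\phi_R\|_{L^{9/5}(A_R)}\leq CR^{-1/3}$, the inertial term pairs $|u|^3\in L^{3/2}$ against $\nabla\phi_R\in L^3$, and the pressure term uses the triple $\tfrac29+\tfrac49+\tfrac13=1$ with $P\in L^{9/4}$ from Calder\'on--Zygmund theory applied to $-\Delta P=\partial_i\partial_j(u_iu_j)$, $u_iu_j\in L^{9/4}$.

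Two steps are asserted rather than proved and would need to be filled in. First, the pointwise identity $|\nabla u|^2=\Delta(\tfrac12|u|^2)-\dv((\tfrac12|u|^2+P)u)$ presupposes that $u$ is (locally) smooth; ``$\nabla u\in L^2$'' alone does not justify the manipulations. For a $D$-solution with $u\in L^{9/2}_{loc}$ this follows from standard elliptic bootstrapping for the steady system, but it should be invoked explicitly, since the statement only assumes finite dissipation and $u\in L^{9/2}_x$. Second, and more substantively, you correctly identify but do not carry out the identification of the pressure with its Riesz-transform representative $\tilde P=R_iR_j(u_iu_j)$: the two differ by a harmonic function $h$, and one must show $h$ is constant. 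The standard route is to observe that $\nabla h=\Delta u-(u\cdot\nabla)u-\nabla\tilde P$ is a tempered distribution built from $L^p$ functions and derivatives of $L^q$ functions, hence $h$ is a harmonic polynomial, and the integrability forces $\nabla h\equiv 0$; only then may $P$ be replaced by $\tilde P$ (up to an irrelevant additive constant, which drops out of $\int_{A_R}P\,u\cdot\nabla\phi_R$ since $\int u\cdot\nabla\phi_R=0$ by $\dv u=0$). With those two points supplied, the argument closes: the right-hand side vanishes as $R\to\infty$, $\int_{\R^3}|\nabla u|^2=0$, and $u$ is constant.
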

We note that in our case, we have a nontrivial solution, which belongs to $L^{9/ 2}_{x}$, but at this time we cannot say whether or not there exist nontrivial solutions in three dimensions that also satisfy the finite dissipation property.
\newline\noindent
{\textbf{Proof of Corollary \ref{cor1} and \ref{cor2}}}
Let $\psi$ be bounded in  \eqref{sol1} and \eqref{sol2} and without the condition \eqref{psi}.  It is obvious from the proof of Theorems \ref{mainthm} and \ref{thm2} that  \eqref{sol1} and \eqref{sol2} are the solutions of $\NS{H^{n}(-a^{2})}$ and \eqref{MNS} respectively since this is independent of the dimension of the underlying manifold.  Hence, we only need to verify that $U^{\ast}$ is $L^{\infty}$ bounded.  That can be checked in more than one way as follows (note we also do not need the exponential decay of the gradient of $F$).  
 Since $F \in C^{\infty}(M) \cap C^{0}(\overline{M})$, $\abs{dF}\leq \infty$ or we could use the much more sophisticated tool of the gradient estimate, Theorem \ref{thmgradient}.
\bibliography{ChiHinMagda}

\begin{thebibliography}{10}

\bibitem{Anderson}
Michael~T. Anderson.
\newblock The {D}irichlet problem at infinity for manifolds of negative
  curvature.
\newblock {\em J. Differential Geom.}, 18(4):701--721 (1984), 1983.

\bibitem{AndersonSchoen}
Michael~T. Anderson and Richard Schoen.
\newblock Positive harmonic functions on complete manifolds of negative
  curvature.
\newblock {\em Ann. of Math. (2)}, 121(3):429--461, 1985.

\bibitem{DindosMitrea}
Martin Dindo{\v{s}} and Marius Mitrea.
\newblock The stationary {N}avier-{S}tokes system in nonsmooth manifolds: the
  {P}oisson problem in {L}ipschitz and {$C^1$} domains.
\newblock {\em Arch. Ration. Mech. Anal.}, 174(1):1--47, 2004.

\bibitem{EbinMarsden}
David~G. Ebin and Jerrold Marsden.
\newblock Groups of diffeomorphisms and the motion of an incompressible fluid.
\newblock {\em Ann. of Math. (2)}, 92:102--163, 1970.

\bibitem{Galdi}
Giovanni~P. Galdi.
\newblock {\em An introduction to the mathematical theory of the
  {N}avier-{S}tokes equations. {V}ol. {II}}, volume~39 of {\em Springer Tracts
  in Natural Philosophy}.
\newblock Springer-Verlag, New York, 1994.
\newblock Nonlinear steady problems.

\bibitem{GilbargTrudinger}
David Gilbarg and Neil~S. Trudinger.
\newblock {\em Elliptic partial differential equations of second order}.
\newblock Classics in Mathematics. Springer-Verlag, Berlin, 2001.
\newblock Reprint of the 1998 edition.

\bibitem{Hopf}
Eberhard Hopf.
\newblock \"{U}ber die {A}nfangswertaufgabe f\"ur die hydrodynamischen
  {G}rundgleichungen.
\newblock {\em Math. Nachr.}, 4:213--231, 1951.

\bibitem{Hua}
Loo~Keng Hua.
\newblock {\em Starting with the unit circle}.
\newblock Springer-Verlag, New York, 1981.
\newblock Background to higher analysis, Translated from the Chinese by Kuniko
  Weltin.

\bibitem{LinftyL3}
L.~Iskauriaza, G.~A. Ser{\"e}gin, and V.~Shverak.
\newblock {$L_{3,\infty}$}-solutions of {N}avier-{S}tokes equations and
  backward uniqueness.
\newblock {\em Uspekhi Mat. Nauk}, 58(2(350)):3--44, 2003.

\bibitem{Jost}
J{\"u}rgen Jost.
\newblock {\em Riemannian geometry and geometric analysis}.
\newblock Universitext. Springer-Verlag, Berlin, fifth edition, 2008.

\bibitem{KNSS}
Gabriel Koch, Nikolai Nadirashvili, Gregory~A. Seregin, and Vladimir
  {\v{S}}ver{\'a}k.
\newblock Liouville theorems for the {N}avier-{S}tokes equations and
  applications.
\newblock {\em Acta Math.}, 203(1):83--105, 2009.

\bibitem{Ladyzhenskaya}
O.~A. Lady{\v{z}}enskaja.
\newblock Uniqueness and smoothness of generalized solutions of
  {N}avier-{S}tokes equations.
\newblock {\em Zap. Nau\v cn. Sem. Leningrad. Otdel. Mat. Inst. Steklov.
  (LOMI)}, 5:169--185, 1967.

\bibitem{Lee}
John~M. Lee.
\newblock {\em Riemannian manifolds}, volume 176 of {\em Graduate Texts in
  Mathematics}.
\newblock Springer-Verlag, New York, 1997.
\newblock An introduction to curvature.

\bibitem{Leray}
Jean Leray.
\newblock Sur le mouvement d'un liquide visqueux emplissant l'espace.
\newblock {\em Acta Math.}, 63(1):193--248, 1934.

\bibitem{Nishikawa}
Seiki Nishikawa.
\newblock {\em Variational problems in geometry}, volume 205 of {\em
  Translations of Mathematical Monographs}.
\newblock American Mathematical Society, Providence, RI, 2002.
\newblock Translated from the 1998 Japanese original by Kinetsu Abe, Iwanami
  Series in Modern Mathematics.

\bibitem{Priebe}
Volker Priebe.
\newblock Solvability of the {N}avier-{S}tokes equations on manifolds with
  boundary.
\newblock {\em Manuscripta Math.}, 83(2):145--159, 1994.

\bibitem{Prodi}
Giovanni Prodi.
\newblock Un teorema di unicit\`a per le equazioni di {N}avier-{S}tokes.
\newblock {\em Ann. Mat. Pura Appl. (4)}, 48:173--182, 1959.

\bibitem{Roe}
John Roe.
\newblock {\em Elliptic operators, topology and asymptotic methods}, volume 395
  of {\em Pitman Research Notes in Mathematics Series}.
\newblock Longman, Harlow, second edition, 1998.

\bibitem{redbook}
R.~Schoen and S.-T. Yau.
\newblock {\em Lectures on differential geometry}.
\newblock Conference Proceedings and Lecture Notes in Geometry and Topology, I.
  International Press, Cambridge, MA, 1994.
\newblock Lecture notes prepared by Wei Yue Ding, Kung Ching Chang [Gong Qing
  Zhang], Jia Qing Zhong and Yi Chao Xu, Translated from the Chinese by Ding
  and S. Y. Cheng, Preface translated from the Chinese by Kaising Tso.

\bibitem{Serrin}
James Serrin.
\newblock The initial value problem for the {N}avier-{S}tokes equations.
\newblock In {\em Nonlinear {P}roblems ({P}roc. {S}ympos., {M}adison, {W}is},
  pages 69--98. Univ. of Wisconsin Press, Madison, Wis., 1963.

\bibitem{Sullivan}
Dennis Sullivan.
\newblock The {D}irichlet problem at infinity for a negatively curved manifold.
\newblock {\em J. Differential Geom.}, 18(4):723--732 (1984), 1983.

\bibitem{MichaelTaylor}
Michael~E. Taylor.
\newblock {\em Partial differential equations. {III}}, volume 117 of {\em
  Applied Mathematical Sciences}.
\newblock Springer-Verlag, New York, 1997.
\newblock Nonlinear equations, Corrected reprint of the 1996 original.

\bibitem{Tsai}
Tai-Peng Tsai.
\newblock On {L}eray's self-similar solutions of the {N}avier-{S}tokes
  equations satisfying local energy estimates.
\newblock {\em Arch. Rational Mech. Anal.}, 143(1):29--51, 1998.

\bibitem{Higherderivatives}
A.~F {Vasseur}.
\newblock {Higher derivatives estimate for the 3D Navier-Stokes equation}.
\newblock {\em ArXiv e-prints}, April 2009.

\bibitem{Yau75}
Shing~Tung Yau.
\newblock Harmonic functions on complete {R}iemannian manifolds.
\newblock {\em Comm. Pure Appl. Math.}, 28:201--228, 1975.

\bibitem{QSZhang}
Qi~S. Zhang.
\newblock The ill-posed {N}avier-{S}tokes equation on connected sums of {$\bold
  R^3$}.
\newblock {\em Complex Var. Elliptic Equ.}, 51(8-11):1059--1063, 2006.

\end{thebibliography}
\bibliographystyle{plain}

\end{document}